\def\ulabel#1#2{\@bsphack\if@filesw {\let\thepage\relax \def\protect{\noexpand\noexpand\noexpand}%
\xdef\@gtempa{\write\@auxout{\string
\newlabel{#1}{{#2 \@currentlabel}{\thepage}}}}}\@gtempa
\if@nobreak \ifvmode\nobreak\fi\fi\fi\@esphack} \makeatother
\newcommand {\N}{\mathbb{N}} %% naturals
\newcommand {\R}{\mathbb{R}} %% reals
\theoremstyle{plain}
\newtheorem{thm}{Theorem}[section]
\newtheorem{thmm}{Theorem}
\newtheorem{cor}[thm]{Corollary}
\newtheorem{lem}[thm]{Lemma}
\newtheorem{prop}[thm]{Proposition}
\newtheorem*{clm*}{Claim}
\newtheorem*{rmk*}{Remark}
\newtheorem*{thm*}{Theorem}
\newtheorem{quest}{Question}
\theoremstyle{definition}
\newtheorem{defn}[thm]{Definition}
\newtheorem{rmk}[thm]{Remark}
\newcommand{\mb}[1]{\mathbb{#1}}
\newcommand{\mesh}{\operatorname{mesh}}
\newcommand{\pres}[2]{\langle\hspace{.5 mm} #1\hspace{.5 mm} | \hspace{.5 mm} #2\hspace{.5 mm} \rangle}
\newcommand{\con}{\operatorname{Con}^\omega\bigl(X,e,d\bigr)}
\newcommand{\conn}[1]{\operatorname{Con}^\omega\bigl(X,e,(#1)\bigr)}
\newcommand{\gcon}{\operatorname{Con}^\omega\bigl(G,d\bigr)}
\renewcommand{\d}{\operatorname{dist}}
\newcommand{\diam}[1]{\operatorname{diam}(#1)}
\newcommand{\area}[1]{\operatorname{Area}(#1)}
\renewcommand{\phi}{\varphi}
\newcommand{\im}{\operatorname{im}}
\newcommand{\lab}[1]{\operatorname{\textbf{Lab}}{(#1)}}
\begin{document}
\title{Local topological properties of  asymptotic cones of groups}
\author{Greg Conner and Curt Kent}
\maketitle

\begin{abstract}
We define a local analogue to Gromov's loop division property which is use to give a sufficient condition for an asymptotic cone of a complete geodesic metric space to have uncountable fundamental group.  As well, this property is used to understand the local topological structure of asymptotic cones of many groups currently in the literature.
\end{abstract}

\tableofcontents

\section{Introduction}

Gromov \cite[Section 5.F]{gr2} was first to notice a connection between the homotopic
properties of asymptotic cones of a finitely generated group and
algorithmic properties of the group: if all asymptotic cones of a finitely generated group are simply connected, then the group is finitely presented, its Dehn function is
bounded by a polynomial (hence its word problem is in NP) and its
isodiametric function is linear. A version of that result for higher
homotopy groups was proved by Riley \cite{Ri}. The converse statement does
not hold: there are finitely presented groups with non-simply connected
asymptotic cones and polynomial Dehn functions \cite{Bridson}, \cite{SBR},
and even with polynomial Dehn functions and linear isodiametric functions
\cite{OS1}. A partial converse statement was proved by Papasoglu
\cite{pap}: a group with quadratic Dehn function has all asymptotic cones
simply connected (for groups with subquadratic Dehn functions, i.e.
hyperbolic groups, the statement was previously proved by Gromov
\cite{gr1}: all asymptotic cones in that case are $\R$-trees). An example
of Thomas and Velickovic \cite{TV} shows that for a finitely generated
group one asymptotic cone can be a tree (and hence simply connected) while
another asymptotic cone may have non-trivial $\pi_1$.  In Section \ref{section examples}, we show how to modify Thomas and Velickovic's example to obtain a finitely generated group with one asymptotic cone which is an $\mathbb R$-tree and one asymptotic cone which is not locally simply connected.  Thus finitely generated groups can have asymptotic cones which are not locally bi-Lipschitz.

If a group is finitely presented and one asymptotic cone is an $\R$-tree, then the group is
hyperbolic, so all asymptotic cones are simply connected (it essentially
follows from Gromov's version of the Cartan-Hadamard theorem for hyperbolic groups, see the appendix
of \cite{OOS}). Nevertheless in \cite{OS2}, a finitely presented group (a multiple HNN extension of a free group) having both simply connected and non-simply connected asymptotic cones was constructed.

In \cite{gr2}, Gromov defined a loop division property and outlined a proof that a metric space has the loop division property if and only if all of its asymptotic cones are simply connected.  Papasoglu presented a proof of the \textit{only if} direction in \cite{pap}.  Dru\c tu gave a proof of the \textit{if} direction in \cite{dru}. A version of the loop division property which guarantees that a particular asymptotic cone is simply connected was presented and used by Olshanskii and Sapir in \cite{OS2}.  Here we will define an analogue Gromov's loop division property which we will use to understand the local topological structure of asymptotic cones.

In Section \ref{section gldp}, we recall some of the definitions and consequences of Gromov's loop division property as studied by Papasoglu and define a local version which we call \emph{$\epsilon$-coarsely loop divisible}.  The coarsely loop divisible property depends on a scaling sequence and an ultrafilter.  We prove that a space is $\epsilon$-coarsely loop divisible with respect to a pair $(\omega, d\bigr)$ if and only if all sufficiently short loops in $\con$ can be partitioned into strictly shorter loops.  We say that a space is uniformly $\epsilon$-coarsely divisible if the number of piece required to partition small loops in $\con$ is uniformly bounded independent of the chosen loop.

\begin{thmm}[\ref{eldpsimply}, \ref{slsc}, \ref{simply}]

Let $G$ be a finitely generated group and fix a pair $\bigl(\omega, d\bigr)$.

\begin{enumerate}[1)]
    \item If $G$ is uniformly $\epsilon$-coarsely loop divisible, then $\gcon$ is uniformly locally simply connected and $G$ has an asymptotic cone which is simply connected.
    \item If $\gcon$ is semi-locally simply connected, then $G$ is $\epsilon$-coarsely loop divisible.
\end{enumerate}

\end{thmm}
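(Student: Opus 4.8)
The plan is to prove the two implications separately, in both cases routing through the Proposition that identifies $\epsilon$-coarse loop divisibility of $G$ with the statement that every sufficiently short loop in $\gcon$ is a product of strictly shorter loops. For part (1) the hypothesis upgrades this to a \emph{uniformly bounded} product (at most $N$ pieces, each, by the definite shortening built into the definition, of at most half the length), and the work is to manufacture genuine nullhomotopies by iterating the division; for part (2) the hypothesis is topological and the argument runs in reverse, extracting such a product from a filling disk.

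For the first conclusion of (1), I would iterate the division. A loop $\gamma$ of length $L\le\epsilon$ splits into at most $N$ loops of length $\le L/2$; each of these splits again, producing a rooted tree of loops whose depth-$k$ vertices have length $\le L/2^{k}$. Reading the tree as a sequence of elementary homotopies yields a map of a subdivided disk whose mesh tends to $0$; since asymptotic cones are complete, these maps converge uniformly to a continuous nullhomotopy of $\gamma$. The point for \emph{uniform local} simple connectivity is that any fixed point of $\gamma$ is displaced only along a single root-to-leaf branch of this tree, so its total displacement is at most $\sum_{k\ge 1}L/2^{k}=L$; hence the nullhomotopy stays inside the $L$-neighborhood of $\gamma$, with the constant independent of $\gamma$.

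For the second conclusion of (1), I would remove the threshold $\epsilon$ by rescaling the defining sequence. Fix $R_n\to\infty$ with $R_n=o(d_n)$ and pass to $\operatorname{Con}^\omega\bigl(G,(d_n/R_n)\bigr)$, which is again an asymptotic cone of $G$. A loop of any fixed length $\ell$ there is represented by loops in $G$ of length $\ell\,d_n/R_n$, which is $\le\epsilon d_n$ for $\omega$-almost every $n$; the divisibility hypothesis therefore applies and splits it into at most $N$ loops of half the length. Thus \emph{every} loop in this cone divides, and the iteration-plus-completeness argument of the previous paragraph, now with no restriction on $\ell$, contracts it, so this asymptotic cone of $G$ is simply connected. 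The main obstacle here is verifying that the uniform division data genuinely transfers through the ultralimit to the rescaled cone, which I expect to handle by expressing divisibility as an $\omega$-almost-everywhere statement about $G$ at scale $d_n$ and then re-reading it at the new scale, exactly as in the length computation above.

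For part (2), let $\gamma$ be a short loop in $\gcon$. Semi-local simple connectivity supplies a neighborhood in which $\gamma$ is nullhomotopic, hence a filling $H\colon D^2\to\gcon$ whose image, being the continuous image of a compact set, is bounded. I would triangulate $D^2$ so finely that $H$ sends each simplex into a ball of radius $\delta$, retain the outer edges as the arcs of $\gamma$, and replace every interior edge by a geodesic between its (nearby) endpoints. This exhibits a filling of $\gamma$ as a planar union of geodesic-edged cells, each a loop of length $O(\delta)$, so for $\delta$ small every cell is strictly shorter than $\gamma$; reading off the cells gives the desired partition of $\gamma$ into strictly shorter loops, whence the Proposition yields $\epsilon$-coarse loop divisibility of $G$. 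The delicate point is that this decomposition must match the paper's notion of loop division --- the chords must be short and the cells must genuinely reassemble to $\gamma$ --- which is precisely what keeping the boundary arcs while replacing interior edges by short geodesics guarantees.
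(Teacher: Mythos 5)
Your overall architecture is the paper's: Papasoglu-style iterated partitions for part (1), and a filling-disc-to-fine-partition argument feeding into the partitionability lemma (\ref{conpart}) for part (2), which matches the paper's proof of \ref{slsc} essentially verbatim. However, your rescaling argument for the simply connected cone has a genuine gap. Uniform $\epsilon$-coarse loop divisibility bounds the divisibility function $\vartheta$ only on windows $\bigcup_{n\in A_\delta}[\delta d_n,\epsilon d_n]$, where the $\omega$-large set $A_\delta$ depends on $\delta$ --- only the bound $K$ is $\delta$-independent. A loop of length $\ell$ in $\operatorname{Con}^\omega\bigl(G,(d_n/R_n)\bigr)$ is represented by loops in $G$ of length roughly $\ell d_n/R_n$, and since $R_n\to\infty$ these lengths fall \emph{below} $\delta d_n$ for every fixed $\delta>0$ $\omega$-almost surely; your length computation only verifies the upper end ($\le\epsilon d_n$) of the window. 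So ``the divisibility hypothesis therefore applies'' fails as written --- this is exactly the transfer you flagged as the main obstacle, and ``re-reading it at the new scale'' does not resolve it. It can be repaired by a diagonal argument: replace $A_{1/m}$ by $\bigcap_{j\le m}A_{1/j}$ so the sets are nested, then choose $R_n$ with $\lim^\omega R_n=\infty$ slowly enough that $n\in A_{1/R_n^2}$, so that any fixed $\ell$ satisfies $\ell d_n/R_n\in[d_n/R_n^2,\epsilon d_n]$ $\omega$-almost surely. The paper avoids the diagonal altogether: it fixes $k$, observes that in $X_k=\operatorname{Con}^\omega\bigl(G,(d_n/k)\bigr)$ all loops in balls of radius $k\epsilon/4$ are uniformly partitionable (there the fixed-$\delta$ hypothesis does apply), takes the ultralimit $\lim^\omega X_k$, identifies it as an asymptotic cone of $G$ via Corollary 3.24 of \cite{DS} --- at the cost of passing to a new ultrafilter and scaling sequence --- and then invokes \ref{pap2}.

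Two smaller slips in the first conclusion of (1). First, the displacement estimate is wrong: when a piece of length $\ell$ is partitioned into at most $N$ subpieces of length $\le\ell/2$, an interior vertex is joined to the piece's boundary only through a chain of up to $N$ cells, each of diameter $\le\ell/4$, so the per-level displacement is $O(N\ell)$, not $\ell$; summing the geometric series yields a disc of diameter $O(N|\gamma|)$, as in \ref{corb}, where the paper notes the proportionality constant is the bound on the number of pieces. Since $N$ is uniform, the conclusion of uniform local simple connectivity survives. Second, the paper's uniform statement concerns loops of small \emph{diameter}, which may have length $\ge\epsilon$ or even be non-rectifiable; these are handled by the five-piece inscribed-square partition of \ref{O}, which your length-only argument omits. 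Finally, note that the equivalence you black-box in part (2) (partitionable loops in the cone imply coarse divisibility of $G$, \ref{conpart}) itself needs homogeneity of the Cayley graph to base the offending loops at the observation point.
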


Papasoglu (see \ref{pap2}) showed that if one requires $G$ to be \emph{uniformly $\epsilon$-coarsely loop divisible with respect to $\bigr(\omega, d\bigl)$ for every $\epsilon>0$}, then one obtains that $\gcon$ is actually simply connected.  However; it is not clear if uniformly coarsely divisible is actually a necessary condition.  Hence, the following questions are open.

Let $G$ be a finitely generated group.
\begin{quest}\label{quest1a} If $\gcon$ is locally simply connected, is $G$ uniformly $\epsilon$-coarsely loop divisible? \end{quest}

\begin{quest}\label{quest1} If $\gcon$ is simply connected, is $G$ uniformly $\epsilon$-coarsely loop divisible for every $\epsilon$? \end{quest}

\ref{hawaiianearing} gives examples of metric spaces which are not asymptotic cones where the answer to both of these question is no.  There are no known examples of finitely generated groups which are coarsely loop divisible but not uniformly coarsely loop divisible which leaves the following question open.

\begin{quest}\label{quest1b}
Are uniformly coarsely loop divisible and coarsely loop divisible equivalent conditions for finitely generated groups?
\end{quest}

A positive answer to Question \ref{quest1b} would imply a positive answer to Question \ref{quest1a} and show that for finitely generated groups locally simply connected and semi-locally simply connected are equivalent properties.

Coarse loop divisibly also allows us to understand some general algebraic properties of the fundamental group of an asymptotic cone.

\begin{thmm}[\ref{uncountable}, \ref{notfree}, \ref{notsimple}]\label{introthm}\ulabel{introthm}{Theorem}
If a finitely generated $G$ is not $\epsilon$-coarsely divisible with respect to $\bigr(\omega,d\bigl)$ for every $\epsilon>0$, then the fundamental group of $\gcon$ is uncountable, not free, and not simple.
\end{thmm}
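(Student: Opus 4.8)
The plan is to convert the failure of coarse loop divisibility into a shrinking sequence of essential, indivisible loops, concatenate them into uncountably many distinct homotopy classes, and then read off the desired algebraic structure. First I would unwind the hypothesis: since $G$ is not $\epsilon$-coarsely loop divisible for some fixed $\epsilon>0$, the characterization from Section~\ref{section gldp} (``$\epsilon$-coarsely loop divisible $\iff$ every sufficiently short loop in $\con$ partitions into strictly shorter loops'') fails, so there is no length threshold below which division is always possible. Hence I obtain loops of arbitrarily small length that cannot be partitioned into strictly shorter loops; such a loop must be essential, since a short null-homotopic loop in the complete geodesic cone can itself be partitioned into strictly shorter pieces. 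Using the homogeneity of $\gcon$ to move basepoints, together with short ``lassos'' to join them, I would assemble an essential, indivisible family $\{\gamma_n\}_{n\in\N}$ based at a single point $p$ with $\ell(\gamma_n)\to 0$, the lengths chosen to decrease fast enough to be summable.

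Next I would build the map on homotopy classes. Because $\gcon$ is a complete metric space and $\sum_n\ell(\gamma_n)<\infty$, for every sequence $a=(a_n)\in\{0,1\}^{\N}$ the concatenation $\Phi(a)=\prod_n\gamma_n^{a_n}$ converges to an honest loop at $p$, giving a map $\Phi\colon\{0,1\}^{\N}\to\pi_1(\gcon)$. The decisive step is to show that distinct sequences give distinct classes, equivalently that a nontrivial product is never null-homotopic and that its ``letters'' can be recovered up to homotopy. I would argue this by contradiction at the first index $N$ where two candidate products differ: a homotopy between them, combined with control on the diameter of null-homotopies of short loops (again furnished by the divisibility characterization), would exhibit $\gamma_N$ as a product of the strictly shorter tail loops $\gamma_k$, $k>N$, contradicting the indivisibility of $\gamma_N$. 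I expect this to be the main obstacle, since it requires simultaneously controlling the scale of the competing homotopy and bookkeeping the tails so that the forced decomposition really is into strictly shorter loops. Granting it, $\Phi$ is injective on an uncountable set, so $\pi_1(\gcon)$ is uncountable.

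The same detection mechanism yields the other two conclusions. Reading off the exponent of the first loop gives a well-defined homomorphism $e_1\colon\pi_1(\gcon)\to\Z$ with $e_1([\gamma_1])=1$ and $e_1([\gamma_2])=0$; its kernel is then a proper normal subgroup that is nontrivial, as it contains the essential class $[\gamma_2]$, so $\pi_1(\gcon)$ is not simple. For non-freeness I would observe that $\Phi$ extends to an injection of the fundamental group of the Hawaiian earring into $\pi_1(\gcon)$: the tail classes $w_n=\prod_{k\ge n}[\gamma_k]$ are all nontrivial by the detection step and satisfy $w_n=[\gamma_n]\,w_{n+1}$. Since free groups are noncommutatively slender (Higman), any homomorphism of the Hawaiian earring group into a free group must kill all but finitely many tails, forcing some $w_n=1$, a contradiction; alternatively, the image is an isomorphic copy of the non-free Hawaiian earring group, and subgroups of free groups are free by Nielsen--Schreier. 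Either way $\pi_1(\gcon)$ is not free.
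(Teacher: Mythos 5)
Your uncountability and non-freeness arguments are workable and run parallel to the paper's. The null sequence of non-partitionable loops at a common basepoint exists exactly as you say (the contrapositive of \ref{conpart} plus homogeneity), though note that homogeneity lets you translate each loop by an isometry to the basepoint, so the ``lassos'' are unnecessary and in fact risky: conjugating $\gamma_n$ by a connecting path changes its length and does not obviously preserve non-partitionability. Your detection step, which you rightly flag as the main obstacle, can be completed by the same homotopy-to-partition mechanism the paper uses in \ref{slsc} and \ref{notsimple}: cancelling the common prefix at the first differing index $N$ shows $\gamma_N$ is homotopic to the tail loop $T_bT_a^{-1}$ of length less than $\frac{|\gamma_N|}{2}$ (lengths decreasing fast enough), and a uniformly continuous annulus homotopy, finely subdivided with the inner boundary inscribed by a geodesic polygon, yields a finite $\frac{|\gamma_N|}{2}$-partition of $\gamma_N$, a contradiction. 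For uncountability the paper itself takes a slicker route --- applying \ref{artin} with $\phi = id$ to show a countable $\pi_1$ forces the images of the small-ball subgroups to stabilize, whence \ref{conpart} gives $\epsilon$-coarse divisibility --- but your explicit Cantor family $\Phi\colon\{0,1\}^{\N}\to\pi_1$ is a legitimate alternative. Your non-freeness argument via the Hawaiian earring map and Higman-type slenderness is essentially the paper's proof of \ref{notfree} (the paper routes it through \ref{freeimage}, Nielsen--Schreier, and \ref{artin} rather than full n-slenderness of free groups).

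The genuine gap is in non-simplicity. ``Reading off the exponent of the first loop'' does \emph{not} give a well-defined homomorphism $e_1\colon\pi_1\bigl(\gcon\bigr)\to\Z$: a generic element of the fundamental group of the cone is not a word in the classes $[\gamma_n]$, and there is no retraction of $\pi_1\bigl(\gcon\bigr)$ onto $\langle[\gamma_1]\rangle$. Your detection lemma only distinguishes elements inside the image of $\Phi$ (or of $f_*(\mathbb H)$); at best it produces a normal subgroup of that \emph{subgroup}, which says nothing about normal subgroups of the whole group, and that is what simplicity is about. The paper's proof of \ref{notsimple} supplies exactly the missing ingredient: it cones off the tail loops $A_n=\bigcup_{i>n}\operatorname{im}\alpha_i$ inside a topological cone $Y_n$ and uses the inclusion-induced map $\iota_{n*}\colon\pi_1\bigl(\gcon\bigr)\to\pi_1(Y_n)$, which is defined on the entire fundamental group, has nontrivial kernel (the coned-off tails die), and has nontrivial image --- the latter proved by showing that a null-homotopy of $\iota_n(\alpha_i)$ in $Y_n$ would restrict to a finite-genus planar annulus whose small boundary components produce a finite $\frac{|\alpha_n|}{2}$-partition of $\alpha_i$, contradicting non-partitionability. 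A simple group admits no homomorphism with both nontrivial kernel and nontrivial image, so this finishes the claim. You need some construction of this kind --- a homomorphism out of all of $\pi_1\bigl(\gcon\bigr)$ whose kernel and image are both seen to be nontrivial; without it, the non-simplicity conclusion does not follow from your setup.
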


These theorems hold for all complete homogenous geodesic metric spaces. In Section \ref{subsection absolutely non-divisible}, we give a necessary condition for every asymptotic cone of a complete homogenous geodesic metric space to satisfy the hypothesis of \ref{introthm}.  It turns out that many important groups such as $SL_3 (\mathbb Z)$ and other groups that have previously appeared in the literature related to asymptotic cones satisfy this condition, see Section \ref{section examples}.

\subsection{Definitions}

Let $G=\langle S\rangle$ be a group and $u,v$ be two words in the alphabet $S$. We write $u\equiv v$ when $u$ and $v$ coincide letter by letter and $u=_G v$ if $u$ and $v$ are equal in $G$.  We will denote the Cayley graph of $G$ with respect to the generating set $S$ by $\Gamma(G,S)$.  We will use $\operatorname{\textbf{Lab}}$ to represent the function from the set of edge paths in a labeled oriented CW complex to the set of words in the alphabet obtained by reading the label of a path.

\textbf{Isoperimetric functions:}  Suppose that $\pres{S}{R}$ is a finite presentation for a group $G$.  Let $\area\Delta$ denote the number of $R$-cells in a van Kampen diagram $\Delta$.  If $w$ is a word in $S\cup S^{-1}$, then $\area w = \min \{\area \Delta \ | \ \lab{\partial\Delta} \equiv w \}$.  If $\gamma$ is a loop in $\Gamma(G,S)$, then $\area\gamma = \area{\lab{\gamma}}$.

An isoperimetric function for the presentation $\pres{S}{R}$ of $G$ is a non-decreasing function $\delta: \N \to [0,\infty)$ such that $\delta(|\partial \Delta|)\geq \area{\lab{\partial \Delta}}$ for all van Kampen diagrams $\Delta$ over $\pres{S}{R}$.  A minimal isoperimetric function of a group is called a \emph{Dehn} function for $G$.

Two non-decreasing functions $f, g:\N \to [0,\infty)$ are \emph{equivalent}, if there exists constants $B,C>0$ such that $f(n) \leq  Bg(Bn +B )+ Bn+B \leq Cf(Cn+C) +Cn +C$.  Up to this equivalence, the Dehn function of a finitely presented group is independent of the finite presentation.

\begin{defn}[Asymptotic cones]
Let $\omega$ be an ultrafilter on $\N$ and  $c_n$ be a sequence of positive real numbers.  The sequence $c_n$ is \emph{bounded $\omega$-almost surely} or \emph{$\omega$-bounded}, if there exists a number $M$ such that $\omega\bigl(\{n \ | \ c_n< M\}\bigr) = 1$.  If $c_n$ is $\omega$-bounded, then there exists a unique number, which we will denote by $\lim^\omega c_n$, such that  $\omega\bigl(\{ n \  | \ |c_n- lim^\omega c_n|< \epsilon\}\bigr) = 1$ for every $\epsilon>0$.

If $c_n$ is not $\omega$-bounded, then $\omega\bigl(\{n \ | \ c_n> M\}\bigr) = 1$ for every $M$.  We will say that \emph{$c_n$ diverges $\omega$-almost surely} or is \emph{$\omega$-divergent} and let $\lim^\omega c_n = \infty$.

Let $(X,\d)$ be a metric space.  Let $\omega$ be an ultrafilter on $\N$.  Consider an $\omega$-divergent sequence of numbers $d = (d_n)$ called a \emph{scaling sequence} and a sequence of points $e=(e_n)$ in $X$ called an \emph{observation sequence}.

Given two sequences $x = (x_n), y = (y_n)$ in  $X$, set $\d(x,y) = \lim{}^\omega\dfrac{\d(x_n,y_n)}{d_n}$.  We can then define an equivalence relation $\sim$ on the set of sequence in $X$  by  $x\sim y$ if and only if $\d(x,y)= 0$.

The asymptotic cone of $X$ with respect to $e$, $d$, and $\omega$ is

$$ \operatorname{Con}^\omega\bigl(X,e,d\bigr) = \{ x = (x_n) \ | \ \d(x,e)<\infty \}/\sim.$$

$\operatorname{Con}^\omega\bigl(X,e,d\bigr)$ is a complete metric space. If $X$ is geodesic, then $\operatorname{Con}^\omega\bigl(X,e,d\bigr)$ is also geodesic.

If $X_n$ is a sequence of subspaces of $X$, we will use $\lim^\omega X_n$ to denote the subspace of $\operatorname{Con}^\omega\bigl(X,e,d\bigr)$ consisting of sequences with representatives in $\prod X_n$.

\end{defn}

The following lemma is obvious.

\begin{lem}
Let $\omega$ be an ultrafilter on $\N$ and $d= (d_n)$ be an $\omega$-divergent
sequence of numbers. Suppose that $\{\gamma_n\}$ is a sequence of paths parameterized by arc length in a geodesic
metric space $(X, \d)$ such that $|\gamma_n| = O(d_n)$. Then $\gamma(t) =
\bigl(\gamma_n(t)\bigr)$ is a continuous map into $\con$.
\end{lem}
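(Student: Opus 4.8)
The plan is to establish the two assertions packaged into the statement: that for each parameter $t$ the sequence $\bigl(\gamma_n(t)\bigr)$ names a genuine point of $\con$, and that the resulting map $t\mapsto\gamma(t)$ is continuous. Throughout write $\ell_n=|\gamma_n|$ and fix, using $\ell_n=O(d_n)$, a constant $M$ together with a set of indices of full $\omega$-measure on which $\ell_n\le Md_n$. I read $\gamma_n$ as the parameterization proportional to arc length on a common interval, so that the basic estimate $\d\bigl(\gamma_n(s),\gamma_n(t)\bigr)\le \ell_n\,|s-t|$ holds; this is the only property of the parameterization that will be used.

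First I would check that the assignment really lands in $\con$, i.e. that $\d\bigl(\gamma(t),e\bigr)<\infty$ for each $t$. The triangle inequality gives $\d\bigl(\gamma_n(t),e_n\bigr)\le \d\bigl(\gamma_n(0),e_n\bigr)+\d\bigl(\gamma_n(t),\gamma_n(0)\bigr)$, and the second term is at most $\ell_n\le Md_n$ by the Lipschitz estimate above. Dividing by $d_n$ and taking $\lim^{\omega}$ shows $\d\bigl(\gamma(t),e\bigr)\le \d\bigl(\gamma(0),e\bigr)+M$, which is finite because $\gamma(0)$ is a point of the cone; hence $\gamma(t)\in\con$ for every $t$. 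The same bound shows that the sequence $\d\bigl(\gamma_n(s),\gamma_n(t)\bigr)/d_n$ is $\omega$-bounded for all $s,t$, so every ultralimit appearing below genuinely exists and the cone distance is well defined.

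The remaining point is continuity, and here the arc-length hypothesis does all the work. For parameters $s,t$ the defining formula for the cone metric together with the Lipschitz estimate gives $\d\bigl(\gamma(s),\gamma(t)\bigr)=\lim^{\omega}\d\bigl(\gamma_n(s),\gamma_n(t)\bigr)/d_n\le |s-t|\cdot\lim^{\omega}(\ell_n/d_n)\le M\,|s-t|$, so $\gamma$ is $M$-Lipschitz and in particular continuous. There is no real obstacle here: the content of the lemma lies entirely in the bookkeeping of the previous paragraph, and the one hypothesis genuinely doing something is $\ell_n=O(d_n)$, which is exactly what prevents the rescaled speed $\ell_n/d_n$ from diverging $\omega$-almost surely and thereby keeps the Lipschitz constant finite. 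Were the length allowed to grow faster than $d_n$, either the path could leave the cone or the limiting distances could blow up, so the proportional-to-arc-length normalization combined with the linear length bound is precisely the hypothesis that makes the ultralimit produce a bona fide continuous path.
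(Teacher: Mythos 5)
Your proof is correct and is exactly the routine verification the paper leaves implicit: the paper states this lemma with the remark ``The following lemma is obvious'' and supplies no proof, and the intended argument is precisely your Lipschitz computation $\d\bigl(\gamma(s),\gamma(t)\bigr)\le \lim{}^\omega(\ell_n/d_n)\,|s-t|\le M|s-t|$ made possible by the proportional-to-arc-length convention and $|\gamma_n|=O(d_n)$. You also correctly isolate the one genuinely implicit hypothesis --- that the basepoint sequence $\bigl(\gamma_n(0)\bigr)$ defines a point of $\con$, to which both membership in the cone and finiteness of the relevant ultralimits reduce --- which the paper takes for granted in all its uses of the lemma.
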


The following converse holds and is proved in \cite{Kent}.

\begin{lem}\label{limitloop}\ulabel{limitloop}{Lemma}
Suppose that $\gamma$ is a path in $\con$ where $X$ is a geodesic metric space.  Then there exist paths $\gamma_n$ in $X$ such that $\gamma(t) = \bigl(\gamma_n(t)\bigr)$.
\end{lem}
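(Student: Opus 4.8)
The plan is to realize $\gamma$ as an ultralimit of piecewise-geodesic paths, choosing the combinatorial fineness of the approximation via a diagonal argument against $\omega$. First, since $[0,1]$ is compact and $\gamma$ is continuous it is uniformly continuous; let $\rho(s)=\sup\{\d(\gamma(u),\gamma(v)) : |u-v|\le s\}$ be its modulus of continuity, so that $\rho$ is non-decreasing and $\rho(s)\to 0$ as $s\to 0^{+}$. Fix a nested sequence of finite partitions $P_1\subseteq P_2\subseteq\cdots$ of $[0,1]$ whose meshes tend to $0$, and set $T=\bigcup_k P_k$, a countable dense subset. For each $\tau\in T$ fix once and for all a representative sequence $(a^\tau_n)_n$ of the cone point $\gamma(\tau)$. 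Because $X$ is geodesic, for each $k$ and $n$ I can define $\gamma^k_n\colon[0,1]\to X$ to be the path that runs along a geodesic from $a^{\tau}_n$ to $a^{\tau'}_n$, parameterized linearly, on each partition interval $[\tau,\tau']$ of $P_k$; each $\gamma^k_n$ is continuous. The aim is to select a level $k(n)$ for each $n$ and put $\gamma_n=\gamma^{k(n)}_n$.

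The diagonalization is driven by auxiliary sets that control \emph{all} pairs of partition points at a given level. For each $k$ put
\[
B_k=\Bigl\{\,n : \d(a^\tau_n,a^{\tau'}_n)\le d_n\bigl(\rho(|\tau-\tau'|)+\tfrac1k\bigr)\ \text{for all } \tau,\tau'\in P_k\,\Bigr\}.
\]
Since $P_k$ is finite and $\lim^{\omega} \d(a^\tau_n,a^{\tau'}_n)/d_n=\d(\gamma(\tau),\gamma(\tau'))\le\rho(|\tau-\tau'|)$ for each of the finitely many pairs, every defining inequality holds $\omega$-almost surely, so $\omega(B_k)=1$. As each finite intersection $\bigcap_{k\le K}B_k$ is then $\omega$-large, I set $k(n)=\sup\{K\le n : n\in\bigcap_{k\le K}B_k\}$ (declaring $\gamma_n$ constant on the $\omega$-null set where this supremum is $0$). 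By construction $n\in B_{k(n)}$ whenever $k(n)\ge 1$, and $k(n)\to\infty$ $\omega$-almost surely, because for each fixed $K$ the set $\{n\ge K\}\cap\bigcap_{k\le K}B_k$ is $\omega$-large (recall $\omega$ is non-principal, so cofinite sets are $\omega$-large). Then $\gamma_n=\gamma^{k(n)}_n$ is a continuous path in $X$ for each $n$.

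It remains to verify that $(\gamma_n(t))$ represents $\gamma(t)$ for every $t$. For $\tau\in T$ one has $\gamma_n(\tau)=a^\tau_n$ as soon as $k(n)$ is large enough that $\tau\in P_{k(n)}$, so $(\gamma_n(\tau))=\gamma(\tau)$. For general $t$, fix $\tau\in T$ near $t$ and let $[q_n,r_n]\ni t$ be the $P_{k(n)}$-interval containing $t$. Writing $\d(a^\tau_n,\gamma_n(t))\le \d(a^\tau_n,a^{q_n}_n)+\d(a^{q_n}_n,\gamma_n(t))$ and using $n\in B_{k(n)}$ together with the fact that $\gamma_n(t)$ lies on the geodesic $[a^{q_n}_n,a^{r_n}_n]$, I obtain, after dividing by $d_n$,
\[
\frac{\d(a^\tau_n,\gamma_n(t))}{d_n}\le \rho\bigl(|\tau-t|+\mesh(P_{k(n)})\bigr)+\rho\bigl(\mesh(P_{k(n)})\bigr)+\frac{2}{k(n)}.
\]
Taking $\lim^{\omega}$ and using $\mesh(P_{k(n)})\to0$ and $k(n)\to\infty$ gives $\d(\gamma(\tau),(\gamma_n(t)))\le\rho(2|\tau-t|)$, whence $\d(\gamma(t),(\gamma_n(t)))\le\rho(|\tau-t|)+\rho(2|\tau-t|)$; letting $\tau\to t$ along $T$ forces this quantity to be $0$. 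The same chain bounds $\d(\gamma_n(t),e_n)/d_n$ $\omega$-almost surely, so $(\gamma_n(t))$ is a genuine point of $\con$, and $\gamma(t)=(\gamma_n(t))$ as required.

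The \textbf{main obstacle} is precisely this last estimate: a merely continuous path need not be rectifiable, so the approximating piecewise-geodesics $\gamma^k_n$ can have length vastly exceeding $d_n$, and one therefore cannot afford to bound $\d(\gamma_n(\tau),\gamma_n(t))$ by arc length along the fine path (the naive bound carries a factor $\mesh(P_{k(n)})^{-1}$ times individual segment lengths, which blows up). The remedy built into the definition of $B_k$ is to control the $X$-distances between all pairs of partition representatives at each level, not merely consecutive ones, so that the triangle inequality can route through a single fixed nearby point $\tau$ instead of accumulating cost along the entire refined path.
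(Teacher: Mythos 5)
Your proof is correct: the piecewise-geodesic approximation through fixed representatives of $\gamma(\tau)$ at the points of finite nested partitions, with the level $k(n)$ chosen by diagonalizing over the $\omega$-large sets $B_k$ (noting $\omega$ must be non-principal since $d$ is $\omega$-divergent), and the all-pairs distance control in $B_k$ correctly circumvents the non-rectifiability issue. The paper itself gives no proof, deferring to \cite{Kent}, and the argument there is essentially this same one, so your proposal matches the intended approach.
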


One should take care to understand that \ref{limitloop} does not imply that all geodesics are limits of geodesics which is not true.  Throughout this paper, we will assume that metric balls are open.  When $\tau$ is a path in a metric space, we will use $|\tau|$ to denote its arc length. Then $|\cdot|$ maps the set of paths into the extended real line and is finite for rectifiable paths and $+\infty$ for
non-rectifiable paths. We will assume that rectifiable paths are parameterized proportional to arc length.

The following definitions of locally connectivity properties are standard, see \cite[Chapter 1]{Hatcher}.
\begin{defn}

A space $X$ is called \emph{locally simply connected} if for every pair $(U,x)$ where $U$ is a neighborhood of $x\in X$, there exists  $V$, a neighborhood of $x$ contained in $U$, such that the inclusion induced homomorphism from $\pi_1(V,x)$ to $\pi_1(U,x)$ is trivial; i.e. every loop in $V$ bounds a disc in $U$.  A metric space $X$ is \emph{uniformly simply connected} if for every $\epsilon>0$ there exists a $\delta>0$ such that every loop with diameter at most $\delta$ bounds a disc with diameter at most $\epsilon$.

A space $X$ is called \emph{semilocally simply connected} if every point $x\in X$ has a neighborhood $U$ such that the inclusion induced
homomorphism from $\pi_1(U,x)$ to $\pi_1(X,x)$ is trivial, i.e. every loop in $U$ bounds a disc in the whole space.

\end{defn}

\begin{rmk*}
A space that is locally simply connected is semilocally simply connected.
The converse is false, since the cone on any space that is not locally
simply connected is semilocally simply connected but still not locally simply
connected.  See \cite[Section 1.3]{Hatcher}.\end{rmk*}

The following definition of a \emph{partition} is due to Papasoglu \cite{pap}.

\textbf{Partitions of the unit disc in the plane:} Let $\mathbb D$ be the unit disk in $\mathbb R^2$. A \emph{partition $P$ of $\mathbb D$} is a finite collection of closed discs $D_1, \cdots, D_k$ in the plane with pairwise disjoint interiors such that $\mathbb D = \cup_i D_i$, $\partial \mathbb D = \partial (D_1\cup \cdots\cup D_k)$, and $D_i\cap D_j = \partial D_i\cap\partial D_j$ when $i\neq j$. A point $p$ on $\partial D_1\cup \cdots\cup\partial D_k$  is called a \emph{vertex of the partition} if for every open set $U$ containing $p$, $U\cap (\partial D_1\cup \cdots\cup\partial D_k)$ is not homeomorphic to an
interval. An \emph{edge of a partition} is a pair of vertices which are joined by a path in $\partial D_1\cup \cdots\cup\partial D_k$ that intersects the set of vertices only at its endpoints.  We will say that such vertices are \emph{adjacent}.  A \emph{piece of a partition} is a maximal set of vertices of the partition contained in a single disc of the partition.  A partition is then a cellular decomposition of the unit disc where each vertex has degree at least 3; so we will use the standard notation, $P^{(i)}$, to denote the $i$-th skeleton of a partition for $i= 0,1,2$.

\textbf{Geodesic $n$-gons in a metric space X:} An \emph{$n$-gon} in $X$ is a
map from the set of vertices of the standard regular $n$-gon in the plane
into $X$, i.e. an ordered set of $n$ points in $X$.  If $X$ is a geodesic
metric space, we can extend an $n$-gon to edges by mapping the edge
between adjacent vertices of the standard regular $n$-gon in the plane to a geodesics segment joining the corresponding vertices of the $n$-gon in $X$.  We will say that such an extension is a \emph{geodesic $n$-gon} in $X$.

\textbf{Partitions of loops in a geodesic metric space X:} Let $\gamma:\partial\mathbb D\to X$ be a continuous map.
A \emph{partition of $\gamma$} is a map $\Pi$ from the set of vertices of a partition $P$ to $X$ such that $\Pi\bigl|_{\partial \mathbb D\cap P^{(0)}}=\gamma\bigl|_{\partial \mathbb D\cap P^{(0)}}$. The \emph{vertices/edges/pieces} of $\Pi$ are the images of
vertices/edges/pieces of $P$.  We will write $\Pi(\partial D_i)$ for the
pieces of $\Pi$, where $D_i$ are the $2$-cells of $P$.

\begin{rmk}\label{extend}\ulabel{extend}{Remark}
Suppose that $\Pi:P^{(0)}\to X$ is a partition of a loop $\gamma$ in a geodesic metric space.  We can extend $\Pi$ to $P^{(1)}$ by mapping each edge contained in $\partial \mathbb D$ to the corresponding subpath of $\gamma$ and every edge not contained in $\partial \mathbb D$ to a geodesic segment joining its end points.  The \emph{length of a piece} is the arc length of the loop $\Pi(\partial D_i)$.  We will write $|\Pi(\partial D_i)|$ for the length of the piece $\Pi(\partial D_i)$. We define the \emph{mesh of $\Pi$} by $$\mesh (\Pi) = \max\limits_{1\leq i\leq k} \{|\Pi(\partial D_i)|\}.$$
\end{rmk}

When $X$ is a Cayley graph of a group, we will also assume that the partition takes vertices of $P$ to vertices in the Cayley graph.  A partition $\Pi$ is called a \emph{$\delta$-partition}, if $\mesh \Pi<\delta$.  A loop of length $k$ in a geodesic metric space is \emph{partitionable} if it has a $\frac{k}{2}$-partition.

Let $P(\gamma,\delta)$ be the minimal number of pieces in a $\delta$-partition of $\gamma$ if a $\delta$-partition exist and $+\infty$ otherwise.

\section{Coarse Loop Division Property}\label{section gldp}

\begin{defn}\label{b}
Let $X$ be a geodesic metric space.

Define $\vartheta^i:\N \to \N\cup\{\infty\}$ by $\vartheta^i(n)= \sup\bigl\{P(\alpha, \frac{|\alpha|}{2^i}) \ | \ \alpha \text{ is a loop in } $X$ \text{ such that } n-1<|\alpha|\leq n\bigr\}$. We will call $\vartheta=\vartheta^1$ the \emph{divisibility} function of $X$.

Suppose $\omega$ is an ultrafilter on $\mb N$, $(d_n)$ an $\omega$-divergent sequence of positive real numbers, and $\epsilon$ a positive real number. We will say that $X$ is \emph{$ \epsilon$-coarsely loop divisible}; if for every $\delta\in(0,\epsilon)$ there exists an $A\subset \N$ with $\omega(A)=1$ such that the divisibility function $\vartheta$ restricted to $ \bigcup\limits_{n\in A}[\delta d_n, \epsilon d_n]$ is bounded by a constant $K=K(\delta,\epsilon)$.

We will say that $X$ is \emph{uniformly $\epsilon$-coarsely loop divisible}; if the constant $K=K(\delta, \epsilon)$ can be chosen independent of $\delta$.

We will say that a group $G$ is (uniformly) $\epsilon$-coarsely loop divisible; if the Cayley graph $\Gamma(G,S)$ is (uniformly) $\epsilon$-coarsely divisible.

The property of being $\epsilon$-coarsely loop divisible depends on $\bigl(\omega, d\bigr)$.  When there is a chance of confusion, we will say that $X$ is $\epsilon$-coarsely loop divisible with respect to $\bigl(\omega, d\bigr)$.

\end{defn}

If $X$ is $\epsilon$-coarsely loop divisible for every $\epsilon$ and the bound $K(\delta, \epsilon)$ can be chosen independent of both $\delta$ and $\epsilon$, then $\con$ has Olshanskii-Sapir's property LDC($K$) as defined in \cite{OS1}.

We will see (\ref{indep}) that for finitely generated groups this definition is independent of the generating set in the sense that if $S,S'$ are two finite generating sets for $G$, then $\Gamma(G,S)$ is $\epsilon$-coarsely loop divisible if and only if $\Gamma(G,S')$ is $\epsilon'$-coarsely loop divisible for some $\epsilon'>0$.

\begin{rmk}\label{iterated}\ulabel{iterated}{Remark}
Suppose that $\vartheta$ is bounded on $\bigl[\frac{n}{2^l}, n\bigr]$ by $K$.  Let $\alpha$ be a loop of length $n$ and fix a partition of $\alpha$ into at most $\vartheta(n)$ pieces with mesh less than $\frac{n}2$.  As in \ref{extend}, the partition can be extended to the $1$-skeleton of the partition such that each loop has length less than $\frac{n}{2}$.  We can then partition each piece with length at least $\frac{n}{4}$ into at most $K$ pieces of length less than $\frac{n}{4}$. This builds a $\frac{n}{4}$-partition of $\alpha$ with at most $K^2$ pieces. Hence $\vartheta^2(n)\leq K^2$.  Iterating this process, we obtain $\vartheta^l(n) \leq K^l$.
\end{rmk}

\begin{lem}\label{lemmaiterated}\ulabel{lemmaiterated}{Lemma}
Fix $l\in\N$.  If $X$ is $\epsilon$-coarsely loop divisible, then for every $\delta\in(0,\epsilon)$ there exists an $A\subset \N$ with $\omega(A)=1$ such that $\vartheta^l$ restricted to $ \bigcup\limits_{n\in A}[\delta d_n, \epsilon d_n]$ is bounded by a
constant $K=K(\delta,\epsilon,l)$.
\end{lem}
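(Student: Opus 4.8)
The plan is to reduce the statement to Remark \ref{iterated}, which already performs the iterated refinement in the ``static'' setting: if $\vartheta$ is bounded by $K$ on the dyadic interval $[m/2^l, m]$, then $\vartheta^l(m)\le K^l$. The only real work is to choose the parameters so that the range over which $\vartheta$ must be controlled matches the range produced by coarse loop divisibility.

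First I would apply the definition of $\epsilon$-coarse loop divisibility (Definition \ref{b}) not to $\delta$ itself but to the smaller parameter $\delta/2^l$. Since $\delta/2^l\in(0,\epsilon)$, this yields a set $A\subseteq\N$ with $\omega(A)=1$ and a constant $K'=K(\delta/2^l,\epsilon)$ such that $\vartheta$ restricted to $\bigcup_{n\in A}\bigl[\tfrac{\delta}{2^l}d_n,\,\epsilon d_n\bigr]$ is bounded by $K'$. I will carry this same set $A$ into the conclusion.

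Next, fix $n\in A$ and an integer $m$ with $\delta d_n\le m\le \epsilon d_n$. The key observation is that the dyadic interval demanded by Remark \ref{iterated} sits inside this controlled range: from $m\ge \delta d_n$ we get $m/2^l\ge \tfrac{\delta}{2^l}d_n$, and from $m\le \epsilon d_n$ we get the upper endpoint, so $[m/2^l,\,m]\subseteq \bigl[\tfrac{\delta}{2^l}d_n,\,\epsilon d_n\bigr]$. Hence $\vartheta$ is bounded by $K'$ on $[m/2^l,\,m]$, and Remark \ref{iterated} gives $\vartheta^l(m)\le (K')^l$. Setting $K(\delta,\epsilon,l):=\bigl(K(\delta/2^l,\epsilon)\bigr)^l$ then bounds $\vartheta^l$ on $\bigcup_{n\in A}[\delta d_n,\epsilon d_n]$, which is exactly the claim.

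I do not expect a serious obstacle; the content is entirely in the bookkeeping of the interval containment, namely the observation that scaling the lower endpoint $\delta d_n$ down by the factor $2^l$ — precisely the dyadic descent performed in the iterated partition of Remark \ref{iterated} — is already compensated for by invoking divisibility at the smaller parameter $\delta/2^l$ at the outset. The one point worth verifying is that the iteration in Remark \ref{iterated} only ever refines pieces whose lengths stay within $[m/2^l,\,m]$ at every one of its $l$ stages, since that is exactly the regime in which we have arranged $\vartheta\le K'$.
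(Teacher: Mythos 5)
Your proof is correct and follows essentially the same route as the paper: both invoke the divisibility hypothesis at the shrunken parameter $\delta/2^l$ to bound $\vartheta$ on $\bigcup_{n\in A}\bigl[\tfrac{\delta}{2^l}d_n,\epsilon d_n\bigr]$ and then apply Remark \ref{iterated} to conclude $\vartheta^l\le K^l$ on $\bigcup_{n\in A}[\delta d_n,\epsilon d_n]$. Your explicit verification of the containment $[m/2^l,m]\subseteq\bigl[\tfrac{\delta}{2^l}d_n,\epsilon d_n\bigr]$ is a detail the paper leaves implicit, but the argument is the same.
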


Thus the coarse loop division property does not depend on which function $\vartheta^l$ is used in its definition.

\begin{proof}
Suppose $X$ is $\epsilon$-coarsely loop divisible.  Fix $\delta$ such that $0<\delta<\epsilon$.  Choose a $K$ and an $\omega$-large $A$ such that $\vartheta$ restricted to $\bigcup\limits_{n\in A}\bigl[\frac{\delta}{2^{l}} d_n, \epsilon d_n\bigr]$ is bounded by $K$.  By \ref{iterated}, $\vartheta^l$ restricted to $\bigcup\limits_{n\in A}\bigl[\delta d_n, \epsilon d_n\bigr]$ is bounded by $K^{l}$.

\end{proof}

\begin{defn}\label{db}
Let $(\gamma_n)$ be a sequence of loops in a metric space $X$ and $d= (d_n)$ an $\omega$-divergent sequence of real numbers.  Then $(\gamma_n)$ is \emph {not $(m,d, \epsilon,\delta)$-partitionable} if $\delta d_n\leq |\gamma_n|\leq \epsilon d_n$ and $P(\gamma_n,|\gamma_n|/2)> m$ $\omega$-almost surely. When $d$ and $\epsilon$ are fixed, we will say that $(\gamma_n)$ is \emph{not $(\delta,m)$-partitionable}.  Additionally; given a sequence of loops which is not $(\delta,m)$-partitionable, we will say that a fixed member $\gamma_n$ of the sequence is not $(\delta ,m)$-partitionable if $\delta d_n\leq |\gamma_n|\leq \epsilon d_n$ and $P(\gamma_n,|\gamma_n|/2)> m$.

\end{defn}

\begin{rmk}\label{O}\ulabel{O}{Remark} Let $\gamma:\partial \mathbb D\to X$ be parameterized by arc length. Suppose that $8\diam{\gamma}<|\gamma| $.  Let $P$ be the cellular decomposition of the unit disc $\mathbb D$ such that $P^{(1)}$ is $\partial \mathbb D\cup A$ where $A$ is a maximal square inscribed in $\mathbb D$.  Then $\Pi:  P^{(0)}\to X$ defined by $\Pi(t) = \gamma(t)$ is a partition of $\gamma$ with five pieces (four $2$-gons and one $4$-gon) and $\mesh{(\Pi)}\leq\max\{ \frac{|\gamma|}{4}+\diam{\gamma},\ 4\diam{\gamma}\}< \frac{|\gamma|}{2}$.

Thus, if $(\gamma_n)$ is not $(m,d, \epsilon,\delta)$-partitionable for some $m\geq5$, then $|\gamma_n|\leq 8\diam{\gamma_n}$.  Hence; if $(\gamma_n)$ is not $(\delta,m)$-partitionable, then $|\gamma_n| \leq O\bigl(\diam{\gamma_n}\bigr)$ where the big $O$ constant is independent of $(\gamma_n)$.
\end{rmk}

The following two propositions were proved by Papasoglu in \cite[pages
792-793]{pap}.  The formulations are slightly different here but the proofs are the same. The proofs are also outlined in \cite{OS1}.

\begin{prop}\label{pap1}\ulabel{pap1}{Proposition}
Let $X$ be a metric space and $(\gamma_n)$ a sequence of loops in $X$ such that $|\gamma_n| = O(d_n)$. If each $\gamma_n$ has a $\delta_n$-partition with at most $k$ pieces, then the loop $\gamma(t) = \bigl(\gamma_n(t)\bigr)$ in $\con$ has a $\delta$-partition with at most $k$ pieces where $\delta = \lim^\omega \frac{\delta_n}{d_n}$.
\end{prop}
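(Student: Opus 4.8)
The plan is to realize the desired partition of $\gamma$ as an $\omega$-ultralimit of the given partitions $\Pi_n$ of the $\gamma_n$. First I would dispose of the trivial case: if $\delta = \lim^\omega \delta_n/d_n = \infty$ there is nothing to prove, since the rectifiable loop $\gamma$ already admits a partition of bounded mesh; so I may assume $\delta < \infty$, equivalently $\delta_n = O(d_n)$ $\omega$-almost surely. Next, since each $\Pi_n$ is modeled on a partition $P_n$ of $\mathbb D$ with at most $k$ pieces and every vertex of a partition has degree at least $3$, the handshake identity together with $V - E + F = 1$ bounds the number of vertices and edges of $P_n$ by a constant depending only on $k$. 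Hence only finitely many combinatorial types of partition occur, and because $\omega$ is an ultrafilter there is an $\omega$-large set on which $P_n$ has a single fixed combinatorial type $P$; I restrict attention to this set.

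With $P$ fixed I would define $\Pi : P^{(0)} \to \con$ by $\Pi(v) = (\Pi_n(v))$. The first thing to check is that each $\Pi(v)$ is genuinely a point of $\con$, i.e. that $(\Pi_n(v))$ stays within $O(d_n)$ of the observation sequence. Every edge of $\Pi_n$ is a subarc of some piece of mesh less than $\delta_n$, so adjacent vertices satisfy $\d(\Pi_n(u),\Pi_n(w)) < \delta_n$; since the $1$-skeleton is connected with combinatorially bounded diameter, every interior vertex is joined to a boundary vertex by a bounded number of edges, whence $\d(\Pi_n(v), e_n) \le O(\delta_n) + \d(\gamma_n(t),e_n) = O(d_n)$, using that $\gamma$ already lies in the cone. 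For the boundary vertices I record their parameters $t_j^{(n)}$ and set $t_j = \lim^\omega t_j^{(n)}$; since $\gamma$ is Lipschitz with speed $\lim^\omega |\gamma_n|/d_n < \infty$, the estimate $\d(\gamma_n(t_j^{(n)}),\gamma_n(t_j)) \le |t_j^{(n)} - t_j|\,|\gamma_n|$ gives $\Pi(v_j) = \gamma(t_j)$, so $\Pi$ restricts to $\gamma$ on the boundary and is a partition of $\gamma$ modeled on $P$ with its boundary marks placed at the $t_j$.

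For the mesh bound I would compute the length of each cone-piece $\Pi(\partial D_i)$ edge by edge. An interior edge of $\Pi$ is a cone-geodesic, so its length equals the cone-distance $\d(\Pi(u),\Pi(w)) = \lim^\omega \d(\Pi_n(u),\Pi_n(w))/d_n$, the ultralimit of the corresponding $X$-edge lengths; a boundary edge is a subarc of $\gamma$, whose length is at most the ultralimit of the lengths of the corresponding subarcs of $\gamma_n$ by the same Lipschitz-speed computation. Summing over the edges of the $i$-th piece yields $|\Pi(\partial D_i)| \le \lim^\omega |\Pi_n(\partial D_i)|/d_n \le \lim^\omega \delta_n/d_n = \delta$, and taking the maximum over the at most $k$ pieces gives $\mesh(\Pi) \le \delta$.

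The step I expect to be most delicate is ensuring the limiting combinatorial partition does not degenerate: a priori two consecutive boundary parameters may have equal $\omega$-limit, so vertices of $P$ collapse and the planar realization with marks at the $t_j$ fails to be a genuine partition. I would handle this by noting that collapsing can only identify vertices and delete edges, producing a valid partition with no more pieces than before (hence still at most $k$), so the conclusion survives; alternatively one perturbs the limit marks by an arbitrarily small amount, increasing the mesh negligibly. A secondary, cosmetic point is that the length estimate gives $\mesh(\Pi) \le \delta$ rather than the strict inequality in the definition of a $\delta$-partition; this is harmless, since a partition of mesh at most $\delta$ is a $\delta'$-partition for every $\delta' > \delta$, which is all the applications require.
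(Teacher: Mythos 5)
Your proposal is correct and takes essentially the same route as the paper's proof (which is Papasoglu's, cited at pages 792--793 of \cite{pap} and described in the proof of the proposition on simple connectivity as ``$\Pi$ is just the $\omega$-limit of the partitions $\Pi_n$''): you stabilize the combinatorial type of the partitions on an $\omega$-large set using the bound $k$ together with the degree-$3$ and Euler-characteristic constraints, and then define $\Pi(v)=(\Pi_n(v))$ as the ultralimit. Your explicit treatment of the two delicate points --- possible collision of boundary parameters in the limit, and the relaxation of the strict mesh inequality to $\mesh(\Pi)\leq\delta$ --- fills in details that the paper (and Papasoglu) leave implicit, without departing from the argument.
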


\begin{prop}\label{pap2}\ulabel{pap2}{Proposition}
Let $X$ be a complete geodesic metric space. If $X$ is uniformly $\epsilon$-coarsely loop divisible for every $\epsilon>0$ with respect to the pair $\bigl(\omega, d\bigr)$, then $\con$ is simply connected.
\end{prop}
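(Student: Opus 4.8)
The plan is to convert the uniform coarse divisibility of $X$ into an intrinsic, scale\nobreakdash-uniform partition property of $\con$, to iterate it so as to produce arbitrarily fine partitions of a disc bounding a given loop, and then to fill the disc using completeness of $\con$; the diameter estimate obtained along the way will let me pass from rectifiable loops to arbitrary ones.

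First I would record a transfer statement. Fix a loop $\gamma$ in $\con$ of (finite) length $\ell$ and choose $\epsilon>\ell$. For any loop $\tau$ in $\con$ of length at most $\ell$, write $\tau$ as a limit $\tau=(\tau_n)$ of loops in $X$ with $|\tau_n|=O(d_n)$ and $|\tau_n|/d_n\to\ell'$ $\omega$-almost surely, where $\ell'=|\tau|$ (approximating $\tau$ by an inscribed geodesic polygon if necessary, so that the lifts are geodesic polygons in $X$ of the correct length). Picking any $\delta<\ell'$, uniform $\epsilon$-coarse divisibility supplies a bound $K=K(\epsilon)$, \emph{independent of} $\delta$, on $\vartheta$ over the window $[\delta d_n,\epsilon d_n]$; hence $\omega$-almost every $\tau_n$ has a $\tfrac{|\tau_n|}2$-partition into at most $K$ pieces, and \ref{pap1} promotes this to a $\tfrac{\ell'}2$-partition of $\tau$ into at most $K$ pieces. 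The essential point is that independence of $\delta$ makes the \emph{same} $K$ serve every loop of length at most $\ell$, in particular every shorter loop produced by subdividing.

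Next, exactly as in \ref{iterated} but carried out inside $\con$, I would iterate: partition $\gamma$ into at most $K$ pieces of length $<\ell/2$, partition each piece into at most $K$ pieces of length $<\ell/4$, and so on, obtaining a nested sequence of partitions $P_i$ of $\mathbb D$ with compatible partition maps $\Pi_i$, at most $K^i$ pieces, and $\mesh(\Pi_i)<\ell/2^i$. Since the geodesics introduced at stage $i$ stay within $\ell/2^i$ of the stage-$i$ vertices, the whole construction remains in the closed $\ell$-neighborhood of $\gamma$. I would then define $F\colon\mathbb D\to\con$ by using the maps $\Pi_i$ (extended to edges as in \ref{extend}) on $\bigcup_i P_i^{(1)}$ and, for a point lying in a nested sequence of $2$-cells $c_0\supset c_1\supset\cdots$, setting $F$ equal to the limit of the piece-images $\Pi_i(\partial c_i)$, which is Cauchy because $\diam{\Pi_i(\partial c_i)}<\ell/2^i$; completeness of $\con$ furnishes the limit and $\mesh(\Pi_i)\to0$ gives continuity. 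This shows that every rectifiable loop of length $\ell$ bounds a disc of diameter $O(\ell)$, i.e. uniform simple connectivity on rectifiable loops.

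Finally I would remove the rectifiability hypothesis. Given an arbitrary loop $\gamma$, uniform continuity yields nested subdivisions of $\partial\mathbb D$ whose inscribed geodesic polygons $\gamma_k$ converge uniformly to $\gamma$, with $\gamma_{k+1}$ refining $\gamma_k$; the region between $\gamma_k$ and $\gamma_{k+1}$ decomposes into geodesic triangles of diameter $O(2^{-k})$, each filled by the previous step with a disc of diameter $O(2^{-k})$. Concatenating these homotopies on time intervals shrinking up to $1$ and declaring the time-$1$ level to be $\gamma$ yields a homotopy from the rectifiable loop $\gamma_1$ to $\gamma$ (continuity at time $1$ holds because the total width $\sum_k O(2^{-k})$ is summable and $\gamma_k\to\gamma$ uniformly); since $\gamma_1$ is null-homotopic, so is $\gamma$. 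I expect two main obstacles. The delicate technical point is checking that $F$ is well defined and continuous at points of $\mathbb D$ meeting no $P_i^{(1)}$ and along the accumulation of the one-skeletons, which is exactly where completeness and the estimate $\mesh(\Pi_i)\to0$ must be used carefully. The conceptually subtle point is the last step: \ref{pap1} applies only to cone loops arising from loops with $|\tau_n|=O(d_n)$, i.e. rectifiable ones, so non-rectifiable loops cannot be partitioned directly and must be treated by the approximation-and-telescoping argument above, for which the diameter control established for rectifiable fillings is indispensable.
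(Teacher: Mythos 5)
Your proposal is correct and takes essentially the same route as the paper: the paper attributes \ref{pap2} to Papasoglu and sketches precisely your argument --- use \ref{pap1} together with the $\delta$-independence of the bound $K$ to give every sufficiently short loop in $\con$ a partition into at most $K$ pieces, iterate as in \ref{extend} to drive the mesh to $|\gamma|/2^i$, and pass to a limit disc (via completeness) whose diameter is proportional to $|\gamma|$. Your closing reduction from arbitrary to rectifiable loops by inscribed geodesic polygons and telescoped fillings is likewise the same device the paper itself carries out in Step 2 of its proof of \ref{simply} (one small inaccuracy: for a merely continuous $\gamma$ the triangle diameters decay like the modulus of continuity of $\gamma$, not necessarily $O(2^{-k})$, but since continuity at time $1$ only needs these diameters to tend to $0$, your argument is unaffected).
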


To prove \ref{pap2}, Papasoglu uses \ref{pap1} to show that every loop in \newline $\con$ is partitionable and the number of pieces is independent of the loop.  He then iterates the process of taking partitions and extending them to the $1$-skeleton as in \ref{extend}.  A consequence of this procedure is that the diameter of the constructed disc is proportional to the length of the loop (the proportionality constant can be chosen to be the bound on the number of pieces in the partitions).

\begin{lem}\label{corb}\ulabel{corb}{Lemma}
Suppose that $X$ is a complete geodesic metric space which is uniformly $\epsilon$-coarsely loop divisible with respect to the pair $\bigl(\omega, d\bigr)$. Then there exists a constant $K$ such that every loop in $\con$ with diameter less than $\frac{\epsilon}{8}$ bounds a disc with diameter less than $K\epsilon$.
\end{lem}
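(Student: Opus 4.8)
The plan is to transfer the divisibility hypothesis into $\con$, fill short loops by an iterated partition controlled by a telescoping diameter estimate, and reduce the diameter hypothesis to a length hypothesis by a preliminary ``fan'' partition. First I would record the cone-level consequence of uniform divisibility. Let $K$ be the uniform constant from uniform $\epsilon$-coarse loop divisibility. Given a loop $\sigma$ in $\con$ with $0<|\sigma|<\epsilon$, choose a representing sequence $\sigma=(\sigma_n)$ with $|\sigma_n|\sim|\sigma|\,d_n$; then $\omega$-almost surely $|\sigma_n|\in[\tfrac{|\sigma|}{2}d_n,\epsilon d_n]$, so each $\sigma_n$ has a $\tfrac{|\sigma_n|}{2}$-partition into at most $K$ pieces, and \ref{pap1} pushes this forward to a partition of $\sigma$ into at most $K$ pieces of length at most $\tfrac{|\sigma|}{2}$. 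The point of the \emph{uniform} hypothesis is that the same $K$ works for every $\delta\in(0,\epsilon)$, hence for loops of every length below $\epsilon$; this is exactly what lets the construction be iterated down to length $0$.

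Next I would iterate. Extending the partition as in \ref{extend}, each of the at most $K$ pieces is a loop of length at most $|\sigma|/2<\epsilon$, so the previous step applies again, and so on. This produces partitions $\Pi_1,\Pi_2,\dots$ whose meshes halve at each stage; by completeness of $\con$ these maps converge uniformly to a continuous map $\mathbb D\to\con$ filling $\sigma$, which is the construction outlined after \ref{pap2}. For the diameter I would use a telescoping estimate: a piece at a given stage is a loop of length less than $L$, hence of diameter at most $L/2$, and since the dual graph of a partition into at most $K$ pieces is connected with at most $K$ vertices, every vertex of that partition lies within $O(KL)$ of the loop being subdivided. Summing the geometric series with $j$th term $O(K|\sigma|/2^{j})$ over the stages shows the whole disc lies within $O(K|\sigma|)$ of $\sigma$; hence $\sigma$ bounds a disc of diameter at most $K'|\sigma|<K'\epsilon$ for a constant $K'$ depending only on $K$.

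Finally I would close the gap between the diameter and the length hypotheses. Given $\gamma$ with $\diam{\gamma}<\epsilon/8$, fix $p=\gamma(0)$ and cut $\gamma$ into $m$ equal subarcs, joining the endpoint of each subarc to $p$ by a geodesic of $\con$; this is where I use that $\con$ is geodesic and that each spoke has length $d(p,\gamma(t))\le\diam{\gamma}$. For $m$ large each resulting sector is a loop of length less than $|\gamma|/m+2\diam{\gamma}<\epsilon$ and diameter at most $2\diam{\gamma}$, so the earlier steps fill it by a disc of diameter $O(K'\epsilon)$ lying within $O(K'\epsilon)$ of $p$. Gluing these discs along the shared spokes produces a disc for $\gamma$ of diameter at most $K\epsilon$ after renaming the constant. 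One may instead run this reduction through \ref{O}, which is the source of the factor $8$ in the hypothesis.

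The main obstacle is the uniform diameter control in the second step: the disc is assembled from infinitely many nested partitions, and one must keep every piece, together with its own filling, inside a neighborhood of $\gamma$ whose radius sums to $O(K\epsilon)$ rather than growing with the number of stages. This is precisely where the \emph{uniformity} of the divisibility constant is indispensable, since a $\delta$-dependent bound would make the geometric series diverge. A secondary technical point, handled by completeness of $\con$ and the decay of the mesh, is that the nested partitions converge to a genuine continuous disc; for this one should take $\gamma$ rectifiable (or pass to a rectifiable representative), since the partition machinery measures the lengths of boundary arcs.
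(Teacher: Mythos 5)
Your proposal is correct and shares its skeleton with the paper's proof: both transfer the uniform divisibility into $\con$ via \ref{pap1}, then fill loops by the iterated-partition construction from the proof of \ref{pap2}, whose telescoping mesh estimate gives a disc of diameter proportional to the length of the loop with constant depending only on the uniform bound $K$ --- this is exactly the remark the paper makes after \ref{pap2}. Where you genuinely diverge is the reduction from the diameter hypothesis to the length hypothesis. The paper does this in one stroke with \ref{O}: a loop of length at least $\epsilon$ and diameter less than $\frac{\epsilon}{8}$ has a $5$-piece partition via an inscribed square, so \emph{every} loop of diameter less than $\frac{\epsilon}{8}$ is partitionable and the proof of \ref{pap2} applies directly. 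You instead cone $\gamma$ off to a basepoint by geodesic spokes, obtaining $m$ sectors each of length less than $\epsilon$ and lying within $2\diam{\gamma}$ of the basepoint, fill each sector by the iteration, and glue along the spokes. Your fan buys something real: each sector lands immediately in the length-$<\epsilon$ regime, so the small-diameter condition never has to persist through the iteration (on the paper's route one must check that the pieces of the \ref{O}-partition are again either shorter than $\epsilon$ or satisfy the diameter-to-length inequality, a constant-chase the paper leaves implicit in ``apply the proof of \ref{pap2}''), and the unbounded number $m$ of sectors is harmless because they are filled independently rather than fed into a uniform iteration. The trade-off is that the fan needs $|\gamma|<\infty$ in order to cut subarcs of controlled length, whereas \ref{O} only places vertices on $\gamma$; your closing caveat about passing to a rectifiable representative is the one step left genuinely open in your write-up, since for a non-rectifiable loop this requires an inscribed-geodesic-polygon limiting argument (fill the polygons and the shrinking triangles between consecutive ones, then use uniform continuity of $\gamma$ to converge to a nullhomotopy, as the paper does in Step 2 of its later simple-connectivity proposition) --- though the paper's own one-line proof is equally terse on this point, absorbing it into the citation of Papasoglu's argument.
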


\proof Since $X$ is uniformly $\epsilon$-coarsely loop divisible every loop in $\con$ with length less than $\epsilon$ is partitionable with a uniform bound on the number of pieces required. Suppose that a loop in $\con$ has length at least $\epsilon$ and diameter less than $\frac{\epsilon}{8}$. Then it has a partition with 5 pieces by \ref{O}.  Thus every loop in $\con$ with diameter less than $\frac{\epsilon}{8}$ is partitionable and we can  apply the proof of \ref{pap2}.
\endproof

\ref{corb} can be restated in the following way.

\begin{prop}\label{eldpsimply}\ulabel{eldpsimply}{Proposition}
Let $X$ be a complete geodesic metric space. If $X$ is uniformly $\epsilon$-coarsely loop divisible, then $\con$ is uniformly locally simply connected.
\end{prop}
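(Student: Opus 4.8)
The plan is to recast Lemma \ref{corb} as a uniform (indeed linear) modulus of simple connectivity, by replacing its single scale $\epsilon$ with the whole family of scales $\epsilon'\in(0,\epsilon]$. The first thing I would record is a monotonicity observation: if $X$ is uniformly $\epsilon$-coarsely loop divisible with respect to $\bigl(\omega,d\bigr)$, then it is uniformly $\epsilon'$-coarsely loop divisible for every $\epsilon'\in(0,\epsilon]$, with the \emph{same} bound $K$ on the number of pieces. Indeed, for $\epsilon'\le\epsilon$ and any $\delta\in(0,\epsilon')$ we have $[\delta d_n,\epsilon' d_n]\subseteq[\delta d_n,\epsilon d_n]$, so the loops whose lengths fall in the smaller range form a subfamily of those already controlled at scale $\epsilon$; since the hypothesis supplies a bound $K$ independent of $\delta$, that same $K$ serves for every $\epsilon'\le\epsilon$.

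Next I would apply Lemma \ref{corb} at each scale $\epsilon'\in(0,\epsilon]$. The constant produced by \ref{corb} is exactly the uniform bound on the number of pieces in the partitions (this is the proportionality constant in the iteration of \ref{pap2} used to prove \ref{corb}), and by the previous paragraph this bound is the \emph{same} $K$ for all $\epsilon'\le\epsilon$. Hence I obtain a single constant $K$ such that, for every $\epsilon'\in(0,\epsilon]$, each loop $\gamma$ in $\con$ with $\diam{\gamma}<\epsilon'/8$ bounds a disc of diameter $<K\epsilon'$.

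Finally I would extract the uniform local statement. Given $\eta\in(0,K\epsilon)$, set $\epsilon'=\eta/K\le\epsilon$ and $\delta=\epsilon'/8=\eta/(8K)$. Then any loop in $\con$ with diameter less than $\delta$ has diameter less than $\epsilon'/8$, so by the previous step it bounds a disc of diameter less than $K\epsilon'=\eta$. Thus for every sufficiently small $\eta>0$ there is a $\delta>0$, depending on $\eta$ linearly via $\delta=\eta/(8K)$, so that every loop of diameter less than $\delta$ bounds a disc of diameter less than $\eta$; equivalently, every loop of diameter $r<\epsilon/8$ bounds a disc of diameter controlled by $8Kr$. Since this modulus is phrased purely in terms of diameters of loops and discs, it holds around every point, which is precisely uniform local simple connectivity of $\con$.

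The main obstacle is the second paragraph: one must guarantee that the constant coming from \ref{corb} does not deteriorate as the top scale $\epsilon'$ shrinks to $0$. This is exactly where the word \emph{uniformly} in the hypothesis is essential, since the bound on the number of pieces is independent of $\delta$ and therefore survives intersecting the admissible length range with arbitrarily small top scales $\epsilon'$. Without uniformity one would only be able to conclude that $\con$ is locally simply connected, with no control making the modulus $\delta(\eta)$ uniform in the diameter.
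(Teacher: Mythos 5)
Your proof is correct and takes essentially the same route as the paper, which proves \ref{eldpsimply} simply by declaring it a restatement of \ref{corb} (whose proof, via \ref{pap2}, supplies exactly the disc-diameter bound proportional to the number of pieces that you invoke). Your explicit monotonicity step — that uniform $\epsilon$-coarse loop divisibility passes to every $\epsilon'\in(0,\epsilon]$ with the same constant $K$, so \ref{corb} applies at all smaller scales with a uniform constant — is precisely the detail the paper leaves implicit in calling the proposition a restatement, and you carry it out correctly to turn the single-scale conclusion of \ref{corb} into the full modulus $\delta(\eta)=\eta/(8K)$ required by the definition of uniform local simple connectivity.
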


\begin{prop}\label{simply}\ulabel{simply} {Proposition}
Let $X$ be a complete geodesic metric space. If $X$ is uniformly $\epsilon$-coarsely loop divisible, then $X$ has an asymptotic cone which is simply connected.
\end{prop}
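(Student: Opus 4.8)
The plan is to reduce to Proposition \ref{pap2} by manufacturing, from the single fixed $\epsilon$, a new scaling sequence $d'=(d'_n)$ (keeping the same ultrafilter $\omega$) with respect to which $X$ is uniformly $\epsilon'$-coarsely loop divisible for \emph{every} $\epsilon'>0$. The useful feature of uniform $\epsilon$-coarse loop divisibility is that it hands us a single constant $K$ together with an $\omega$-large window of \emph{unbounded ratio}: applying the definition with $\delta=\epsilon/2^{j}$ yields, for each $j\in\N$, an $\omega$-large set $A_j$ on which $\vartheta$ is bounded by $K$ over $\bigcup_{n\in A_j}[\epsilon d_n 2^{-j},\epsilon d_n]$. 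Intersecting, I may assume $A_1\supseteq A_2\supseteq\cdots$, so that $J(n):=\sup\{\,j : n\in A_j\,\}$ satisfies $\{\,n : J(n)\ge j\,\}=A_j$ for every $j$; hence $J(n)\to\infty$ $\omega$-almost surely.

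Next I would place the normalising scale at the geometric centre of the good window. Set $j(n):=\min\bigl(J(n),\lfloor\log_2\log_2 d_n\rfloor\bigr)$ and $d'_n:=\epsilon d_n\,2^{-j(n)/2}$. Since $j(n)\le J(n)$ and the $A_j$ are nested, $n\in A_{j(n)}$, so $\vartheta\le K$ holds on $[\epsilon d_n 2^{-j(n)},\epsilon d_n]$ $\omega$-almost surely. The cap by $\lfloor\log_2\log_2 d_n\rfloor$ does two jobs at once: because both $J(n)$ and $d_n$ tend to $\infty$ $\omega$-almost surely, $j(n)\to\infty$ $\omega$-almost surely, while $d'_n\ge \epsilon d_n/\sqrt{\log_2 d_n}\to\infty$ guarantees that $d'$ remains $\omega$-divergent, so that $\conn{d'}$ is a genuine asymptotic cone of $X$.

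Finally I would verify the divisibility of $X$ relative to $(\omega,d')$. In $d'$-normalised coordinates the good window $[\epsilon d_n 2^{-j(n)},\epsilon d_n]$ becomes $[2^{-j(n)/2},2^{j(n)/2}]$, an interval whose endpoints run off to $0$ and $\infty$ as $j(n)\to\infty$. Concretely, for any prescribed $0<\delta'<\epsilon'$ the inclusion $[\delta' d'_n,\epsilon' d'_n]\subseteq[\epsilon d_n 2^{-j(n)},\epsilon d_n]$ is equivalent to $2^{j(n)/2}\ge\max\{\epsilon',1/\delta'\}$, which holds $\omega$-almost surely. Thus on an $\omega$-large set $\vartheta\le K$ over $\bigcup_n[\delta' d'_n,\epsilon' d'_n]$, with $K$ independent of both $\delta'$ and $\epsilon'$; that is, $X$ is uniformly $\epsilon'$-coarsely loop divisible with respect to $(\omega,d')$ for every $\epsilon'>0$. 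Proposition \ref{pap2} then shows $\conn{d'}$ is simply connected, which is the desired asymptotic cone.

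I anticipate the main obstacle to be the simultaneous balancing in the second step: the window must be re-centred deep enough ($j(n)\to\infty$) that its renormalisation swallows every bounded range of scales, yet not so deep that $d'_n$ fails to diverge. The iterated-logarithm cap is the device that reconciles these, and confirming that it respects the membership $n\in A_{j(n)}$ (so that the bound $K$ is genuinely available along the chosen scale) is the one place that demands care.
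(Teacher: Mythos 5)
Your proof is correct, and it reaches the conclusion by a genuinely different route from the paper's. You keep the ultrafilter fixed and manufacture a new scaling sequence: nesting the $\omega$-large sets $A_j$, letting $j(n)\to\infty$ $\omega$-almost surely, capping by $\lfloor\log_2\log_2 d_n\rfloor$ so that $d'_n=\epsilon d_n2^{-j(n)/2}$ stays $\omega$-divergent, and placing $d'_n$ at the geometric centre of the good window so that any prescribed band $[\delta' d'_n,\epsilon' d'_n]$ is swallowed by $[\epsilon d_n2^{-j(n)},\epsilon d_n]$ once $2^{j(n)/2}\ge\max\{\epsilon',1/\delta'\}$, which holds $\omega$-almost surely; the membership $n\in A_{j(n)}$ does hold on an $\omega$-large set by the nesting, so the single constant $K$ is available along the chosen scales and \ref{pap2} applies verbatim to $\conn{d'_n}$. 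The paper argues differently: it rescales the cone rather than the scaling sequence, forming $X_k=\conn{d_n/k}$, invoking Corollary 3.24 of Dru\c tu--Sapir to identify the ultralimit $\lim^\omega X_k$ with an asymptotic cone $\operatorname{Con}^\mu\bigl(X,(x_n),(p_n)\bigr)$ of $X$, and transferring a uniform partition bound $\nu_0$ for loops in small balls of $\con$ through the scale-$k$ identity maps to all loops of the ultralimit, so that Papasoglu's argument applies there. Your version buys self-containedness (no appeal to the ultralimit-of-cones machinery), retains the original ultrafilter $\omega$, and exhibits the simply connected cone via an explicit scaling sequence, at the price of the bookkeeping with nested $\omega$-large sets and the divergence cap; the paper's version is shorter and works directly with partitions of loops in the cone, but lands in a cone with a possibly different ultrafilter and observation and scaling data $\bigl(\mu,(x_n),(p_n)\bigr)$ that it does not make explicit. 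Two trivial points of care in yours, neither a gap: define $j(n)$ arbitrarily (say $0$) on the $\omega$-null set where $d_n<2$ or $n\notin A_1$, and note that $J(n)$ may be infinite, which your minimum with the cap already handles.
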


\proof Suppose that $X$ is uniformly $\epsilon$-coarsely loop divisible for some $\bigl(\omega, d\bigr)$ and
$\epsilon>0$. We can consider an ultralimit of the metric spaces $X_k= \conn{d_n/k}$. By Corollary 3.24 in \cite{DS}, $\lim^\omega X_k$ is again an asymptotic
cone of $X$. Thus we can choose $\bigr(\mu,(p_n)\bigl)$ and $(x_n)$ such that $\operatorname{Con}^\mu\bigl(X,(x_n),(p_n)\bigr)$ and $\lim^\omega X_k$ are isometric.

The identity map $id$ from $\con$ to $X_k$ rescales distances by a fixed constant which implies that $P(\gamma, |\gamma|/2) = P(id(\gamma),id(|\gamma|)/2 ) $.  Since $X$ is uniformly $\epsilon$-coarsely loop divisible, there exists $\nu_0$ such that $P(\gamma, |\gamma|/2)<\nu_0$ for every loop $\gamma$ contained in a ball of radius $\frac{\epsilon}4$ in $\con$. Hence; every
loop $\gamma$ contained in a ball of radius $\frac{k\epsilon}{4}$ in $X_k$
has the property that $P(\gamma, |\gamma|/2 )<\nu_0$. Thus for any loop $\alpha$ in
$\lim^\omega X_k$; $P(\alpha, |\alpha|/2 )<\nu_0$.  Hence $\lim^\omega X_k$ is uniformly $\epsilon$-coarsely loop divisible for every $\epsilon>0$ with respect to the pair $\bigl(\mu,(p_n)\bigr)$ and \ref{pap2} implies that $\operatorname{Con}^\mu\bigl(X,(x_n),(p_n)\bigr)$ is simply connected.
\endproof

\ref{corb} shows that a necessary condition for a group to be uniformly $\epsilon$-coarsely loop divisible for every $\epsilon>0$
is that all loops in $\con$ bound discs with diameters proportional
to their length.

\begin{rmk}\label{hawaiianearing}\ulabel{hawaiianearing}{Remark}

Let $X$ be a topological space. The \emph{topological cone} of $X$ written $\hat X$ is the quotient space of $X\times [0,1] $ obtained by identifying all points $(x,1)$ for $x\in X$. $X$ canonically embeds in $\hat X$ by $x\mapsto (x,0)$ and we will generally identify $X$ with $X\times\{0\}$.  The \emph{Hawaiian earring} is the one-point compactification of a sequence of disjoint arcs and can be realized in the plane as the union of circles centered at $(0,\frac1n)$ with radius $\frac1n$.  We will use $\textbf{E}$ to denote this subspace of the plane and $\mathbf{a}_n$ to denote the
circle centered at $(0,\frac1n)$ with radius $\frac1n$.  The \emph{Hawaiian earring group} is $\pi_1(\textbf{E},(0,0)) = \mb H$.  Let $\textbf{E}_n = \bigcup\limits_{i\geq n} \mathbf{a}_i$ and $\mathbb H_n = \pi_1(\textbf E_n,(0,0))\leq \mathbb H$.  Notice that $\textbf{E}_n$ is homeomorphic to $\textbf {E}$ which implies that $\mathbb H_n$ is isomorphic to $\mathbb H$.

$\hat{\mathbf E}$ is a space which is not uniformly $\epsilon$-coarsely loop divisible but is simply connected and not locally simply connected. Suppose that instead of coning from a single point, we were to cone each circle individually.  Then as long as we required that the sequence of cone points converged to the wedge point of $\mathbf E$ but at a rate slower than the radii of the loops, this space would be locally simply connected but not be uniformly $\epsilon$-coarsely loop divisible for any $\epsilon$.  These two examples show that for general metric spaces being uniformly $\epsilon$-coarsely loop divisible is not a necessary condition for a space to be simply connected or locally simply connected.\end{rmk}

Erschler-Osin \cite{EO} and Dru\c tu-Sapir \cite{DS} proved that many metric spaces $\pi_1$-embed into the asymptotic cones of finitely generated groups.  In both papers, the spaces that were $\pi_1$-embedded into the asymptotic cones of finitely generated groups were uniformly locally simply connected.  %However, it is not known whether $\hat{\mathbb H}$ can be $\pi_1$-embedded into the asymptotic cone of a finitely generated group.

A positive answer to either Question \ref{quest1a} or Question \ref{quest1} would imply that  the results of Erschler-Osin and Dru\c tu-Sapir cannot be extended to spaces which are semilocally simply connected but not locally simply connected.

We will now prove some implications of the coarse loop division property.

The following lemma is an immediate consequence of \ref{pap1} and \ref{limitloop}.

\begin{lem}
Suppose that $X$ is a complete geodesic metric space which is $\epsilon$-coarsely loop divisible.  Every loop in $\con$ with length less than $\epsilon$ is partitionable.
\end{lem}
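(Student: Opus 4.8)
Let $\gamma$ be a loop in $\con$ with $|\gamma| = L < \epsilon$; I may assume $L > 0$, since a constant loop is not at issue. The plan is to pull $\gamma$ back to a sequence of loops $(\gamma_n)$ in $X$ whose lengths are controlled by $L$, partition each $\gamma_n$ using coarse divisibility, and push the partitions forward with \ref{pap1}. First I would invoke \ref{limitloop} to write $\gamma(t) = (\gamma_n(t))$ for paths $\gamma_n$ in $X$. Since $\gamma$ is a loop, $\gamma_n(0)$ and $\gamma_n(1)$ are $\sim$-equivalent, so appending a geodesic of length $o(d_n)$ lets me assume each $\gamma_n$ is a genuine loop without changing $(\gamma_n)$. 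The crucial feature I need is that the realization is \emph{efficient}, i.e.\ $\lim^\omega \tfrac{|\gamma_n|}{d_n} = L$; this is what the construction of \ref{limitloop} in \cite{Kent} provides, and in particular $|\gamma_n| = O(d_n)$.

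Next, fix reals $\delta'$ and $\epsilon''$ with $0 < \delta' < L < \epsilon'' < \epsilon$. Applying $\epsilon$-coarse loop divisibility to $\delta'$ yields a constant $K = K(\delta', \epsilon)$ and an $\omega$-large set $A$ on which $\vartheta$ is bounded by $K$ over $\bigcup_{n\in A}[\delta' d_n, \epsilon d_n]$. Because $\lim^\omega \tfrac{|\gamma_n|}{d_n} = L$ and $\delta' < L < \epsilon''$, the set of $n$ with $\delta' d_n \le |\gamma_n| \le \epsilon'' d_n$ is $\omega$-large; intersecting it with $A$ and with $\{n : d_n \ge (\epsilon - \epsilon'')^{-1}\}$ (which is $\omega$-large since $d$ is $\omega$-divergent), I get that $\omega$-almost surely $n \in A$ and $\delta' d_n \le \lceil |\gamma_n|\rceil \le \epsilon'' d_n + 1 \le \epsilon d_n$. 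Hence $P(\gamma_n, |\gamma_n|/2) \le \vartheta(\lceil |\gamma_n|\rceil) \le K$ $\omega$-almost surely, so $\omega$-almost every $\gamma_n$ admits a $\tfrac{|\gamma_n|}{2}$-partition with at most $K$ pieces. The choice of $\epsilon''$ strictly below $\epsilon$ is exactly what absorbs the rounding in $\lceil |\gamma_n|\rceil$.

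Finally I would apply \ref{pap1} with $\delta_n = |\gamma_n|/2$ and $k = K$ (redefining $\gamma_n$ arbitrarily off the above $\omega$-large set so that the partition hypothesis holds for every $n$, which does not affect $(\gamma_n) = \gamma$). This produces a $\delta$-partition of $\gamma$ with at most $K$ pieces, where $\delta = \lim^\omega \tfrac{|\gamma_n|/2}{d_n} = \tfrac{L}{2}$. A $\tfrac{L}{2}$-partition of a loop of length $L$ is precisely the definition of partitionable, so $\gamma$ is partitionable. The main obstacle, and the only non-formal step, is securing the efficient realization $\lim^\omega \tfrac{|\gamma_n|}{d_n} = L$: an arbitrary realization from \ref{limitloop} may have $\lim^\omega \tfrac{|\gamma_n|}{d_n} > L$ (even $\ge \epsilon$), which would simultaneously push the resulting mesh above $\tfrac{L}{2}$ and drive $|\gamma_n|$ outside the range $[\delta' d_n, \epsilon d_n]$ where divisibility is available, so the whole argument hinges on realizing $\gamma$ by loops whose length is asymptotic to $L$.
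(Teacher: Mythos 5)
Your proof is correct and follows exactly the route the paper intends: the paper offers no separate argument, stating only that the lemma is an immediate consequence of \ref{pap1} and \ref{limitloop}, and your write-up is precisely that deduction. Your care about the efficient realization $\lim^\omega \frac{|\gamma_n|}{d_n} = |\gamma|$ from \ref{limitloop} and about keeping $|\gamma_n|$ (after rounding) inside $\bigcup_{n\in A}[\delta' d_n, \epsilon d_n]$ supplies exactly the details the paper leaves implicit.
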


\begin{lem}\label{conpart} \ulabel{conpart}{Lemma}
Suppose that $X$ is a complete homogeneous geodesic metric space.  If every loop in \newline $\con$ with length less than $\epsilon$ is partitionable, then $X$ is $\epsilon'$-coarsely loop divisible with respect to the pair $\bigl(\omega, d\bigr)$ for every $\epsilon'<\epsilon$.
\end{lem}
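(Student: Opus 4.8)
The plan is to argue by contradiction. Fix $\epsilon'<\epsilon$ and suppose $X$ is not $\epsilon'$-coarsely loop divisible with respect to $\bigl(\omega,d\bigr)$. Reading off Definition \ref{b}, this failure is witnessed by a single $\delta_0\in(0,\epsilon')$: for every $\omega$-large $A\subset\N$ and every constant $K$, the divisibility function $\vartheta$ is unbounded on $\bigcup_{n\in A}[\delta_0 d_n,\epsilon' d_n]$. My goal is to manufacture from this a loop in $\con$ of length strictly less than $\epsilon$ that admits no partition into strictly shorter pieces, directly contradicting the hypothesis.

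First I would produce, at every scale, loops that are expensive to partition. For $m\in\N$ let $B^{(m)}$ be the set of $n$ for which $X$ contains a loop $\alpha$ with $\delta_0 d_n-1<|\alpha|\le\epsilon' d_n$ and $P(\alpha,|\alpha|/2)>m$, where the shift by $1$ merely absorbs the off-by-one in the definition of $\vartheta$. If $\omega(B^{(m)})\ne1$ then $\N\setminus B^{(m)}$ is $\omega$-large, and applying the witnessing property of $\delta_0$ to it produces some $n\notin B^{(m)}$ together with an integer $k\in[\delta_0 d_n,\epsilon' d_n]$ with $\vartheta(k)>m$; the loop realizing $\vartheta(k)>m$ then lands in $B^{(m)}$, a contradiction. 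Hence $\omega(B^{(m)})=1$ for every $m$, and for $n\in B^{(m)}$ I may select a witnessing loop $\gamma^{(m)}_n$. Here the hypothesis of homogeneity is essential: composing with an isometry of $X$ that carries the basepoint of $\gamma^{(m)}_n$ to the observation point $e_n$ changes neither $|\gamma^{(m)}_n|$ nor $P(\gamma^{(m)}_n,\cdot)$, so I may assume every $\gamma^{(m)}_n$ is based at $e_n$ and hence lies within $\epsilon' d_n$ of $e_n$. Without this repositioning the expensive loops could drift away from the observation sequence and never assemble into a loop of $\con$.

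Next I would diagonalize in $m$, using that $\omega$ is non-principal (forced by $\omega$-divergence of $d$). Setting $E^{(m)}=\bigcap_{j\le m}B^{(j)}$, an $\omega$-large decreasing family, I define $m(n)=\max\{m\le n : n\in E^{(m)}\}$ and $\gamma_n=\gamma^{(m(n))}_n$. For each fixed $m$, on the $\omega$-large set $E^{(m)}\cap[m,\infty)$ one has $m(n)\ge m$, so $\omega$-almost surely $P(\gamma_n,|\gamma_n|/2)>m$ while $\delta_0 d_n-1<|\gamma_n|\le\epsilon' d_n$; in the language of Definition \ref{db} the sequence $(\gamma_n)$ fails to be $(\delta_0,m)$-partitionable for every $m$. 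Since $|\gamma_n|=O(d_n)$ and each $\gamma_n$ is based at $e_n$, the map $\gamma(t)=\bigl(\gamma_n(t)\bigr)$ is a genuine loop in $\con$ with $0<\delta_0\le|\gamma|\le\epsilon'<\epsilon$.

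The hard part will be showing that $\gamma$ is not partitionable, which amounts to the converse of \ref{pap1}: passing a partition of the cone loop down to the approximating loops. Suppose $\gamma$ had a $\tfrac{|\gamma|}{2}$-partition $\Pi$ with $k$ pieces. Choosing a representative sequence for each of the finitely many vertices of $\Pi$, taking boundary values from the $\gamma_n$, and joining adjacent images by geodesics as in \ref{extend} yields partitions $\Pi_n$ of $\gamma_n$. Each edge length of $\Pi_n$, divided by $d_n$, has $\omega$-limit equal to the corresponding edge length of $\Pi$, so $\lim^\omega|\Pi_n(\partial D_i)|/d_n=|\Pi(\partial D_i)|<|\gamma|/2=\lim^\omega|\gamma_n|/(2d_n)$ for each of the finitely many pieces; hence $\omega$-almost surely every piece of $\Pi_n$ is shorter than $|\gamma_n|/2$, giving $P(\gamma_n,|\gamma_n|/2)\le k$ on an $\omega$-large set and contradicting that $(\gamma_n)$ is not $(\delta_0,k)$-partitionable. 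Therefore $\gamma$ has no partition of mesh below $|\gamma|/2$, contradicting the hypothesis that every loop in $\con$ of length less than $\epsilon$ is partitionable, and the lemma follows. The only delicate points in this last step are that the strict inequality $|\Pi(\partial D_i)|<|\gamma|/2$ survives $\omega$-almost surely (using finiteness of the partition) and that the boundary parametrization is respected, both of which are routine once the loops have been recentered at $e_n$.
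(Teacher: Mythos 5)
Your overall architecture is essentially the paper's: argue by contradiction, extract from the failure of $\epsilon'$-coarse divisibility a sequence of loops $\gamma_n$ at scale $[\delta_0 d_n,\epsilon' d_n]$ with $P(\gamma_n,|\gamma_n|/2)\to\infty$ $\omega$-almost surely, recenter by homogeneity (which the paper leaves implicit), form the limit loop $\gamma$ in $\con$, apply the hypothesis to get a finite $\frac{|\gamma|}{2}$-partition, and push it back down to contradict the choice of $\gamma_n$. Your $B^{(m)}$-and-diagonalization selection is a more explicit version of the paper's device of choosing, for each $n$, a loop with $P>n$ or with $P$ extremal on the given length range; the two selections do the same work.

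There is, however, a concrete error in your justification of the push-down step, at the two asserted equalities $\lim^\omega|\Pi_n(\partial D_i)|/d_n=|\Pi(\partial D_i)|$ and $|\gamma|/2=\lim^\omega|\gamma_n|/(2d_n)$. Arc length is only \emph{lower} semicontinuous under ultralimits: since each $\gamma_n$ is parameterized proportional to arc length, a boundary sub-arc over a parameter interval of length $t-s$ pulls back to a subpath of $\gamma_n$ of length exactly $(t-s)|\gamma_n|$, whereas the corresponding sub-arc of $\gamma$ can be far shorter (even constant) if $\gamma_n$ backtracks at scale $o(d_n)$ there. So with $\ell=\lim^\omega|\gamma_n|/d_n$ one only has $|\gamma|\leq \ell$ and $|\Pi(\partial D_i)|\leq \lim^\omega|\Pi_n(\partial D_i)|/d_n$, and a piece of $\Pi$ whose boundary arcs carry a large parameter fraction of $\partial\mathbb D$ (cheap in the cone because of collapsing) can pull back to length at least $\ell d_n/2\approx|\gamma_n|/2$, destroying the mesh bound. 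For the same reason your claim $\delta_0\leq|\gamma|$ is unjustified as written; what is true, and what the paper actually uses, is that \ref{O} forces $|\gamma_n|\leq 8\diam{\gamma_n}$ once $P(\gamma_n,|\gamma_n|/2)>5$, whence $\diam{\gamma}\geq \delta_0/8$ and $\gamma$ is nonconstant. The repair is to avoid pulling back boundary arcs wholesale: subdivide each boundary edge of $\Pi$ into sub-arcs of parameter length at most $1/8$ and replace each sub-arc by the geodesic chord between its endpoints, creating $2$-gon pieces as in \ref{extend}; estimate each chord by the ultralimit of the $X$-distances, which is at most $(t-s)\ell\leq \ell/8$, rather than by the cone arc length. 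Then each $2$-gon pulls back to length at most $\ell d_n/4+o(d_n)<|\gamma_n|/2$, each central piece becomes a geodesic polygon of cone length less than $|\gamma|/2\leq\ell/2$ whose pulled-back length converges to its cone length, and since there are finitely many pieces all the strict inequalities hold $\omega$-almost surely, yielding an induced partition of $\gamma_n$ with a fixed finite number of pieces and mesh less than $|\gamma_n|/2$ --- the desired contradiction. To be fair, the paper's one-sentence claim that the cone partition ``induces'' one downstairs elides exactly this point, but it does not assert the false equalities on which your written version leans.
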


\begin{proof} Suppose that $X$ is not $\epsilon'$-coarsely loop divisible with respect to the pair $\bigl(\omega, d\bigr)$ for some $\epsilon'$ with $0<\epsilon'<\epsilon$.  Then there exists a $\delta>0$ such that for every $\omega$-large $A$, $\vartheta$ restricted to $ \bigcup\limits_{n\in A}[\delta d_n, \epsilon' d_n]$ is unbounded.

Let $\gamma_n$ be a loop based at $x_n$ such that $\delta d_n\leq|\gamma_n|\leq \epsilon' d_n$ and satisfies at least one of the two following properties.

\begin{enumerate}[a)]
    \item $P\Bigl(\gamma_n, \dfrac{|\gamma_n|}{2}\Bigr)>n$
    \item $P\Bigl(\gamma_n, \dfrac{|\gamma_n|}{2}\Bigr)\geq P\Bigl(\alpha, \dfrac{|\alpha|}{2}\Bigr)$
        for all $\alpha$ such that $\delta d_n\leq|\alpha|\leq \epsilon'
        d_n$
\end{enumerate}

Let $m_n = P\Bigl(\gamma_n, \dfrac{|\gamma_n|}{2}\Bigr)$.  Since $\vartheta$ restricted to $ \bigcup\limits_{n\in A}[\delta d_n, \epsilon d_n]$ is unbounded for every $\omega$-large $A$; $\lim^\omega m_n = +\infty$. Thus for every $m$, $(\gamma_n)$ is not $(\delta ,m)$-partitionable.

The path $\gamma(t)=\bigl(\gamma_n(t)\bigr)$ is a well-defined loop in $\con$ with positive diameter and arc length at most $\epsilon'< \epsilon$.  By assumption, there exists a $\frac{|\gamma|}{2}$-partition of $\gamma$ with $L$ pieces.  However, this induces a $|\gamma_n|/2$-partition of $\gamma_n$ with $L$ pieces $\omega$-almost surely.  Hence $P\Bigl(\gamma_n, \dfrac{|\gamma_n|}{2}\Bigr)\leq L$ $\omega$-almost surely, which contradicts our choice of $m_n$.

\end{proof}

\begin{prop}\label{slsc}\ulabel{slsc}{Proposition}
Suppose that $X$ is a complete homogenous geodesic metric space.  If \newline $\con$
is semilocally simply connected then $X$ is $\epsilon$-coarsely loop divisible for some $\epsilon>0$.
\end{prop}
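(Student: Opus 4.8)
The plan is to find an $\epsilon>0$ for which \emph{every} loop in $\con$ of length less than $\epsilon$ is partitionable, and then to quote \ref{conpart} to deduce that $X$ is $\epsilon'$-coarsely loop divisible with respect to $\bigl(\omega,d\bigr)$ for every $\epsilon'<\epsilon$, which gives the conclusion for some $\epsilon$. First I would upgrade the semilocal hypothesis to a uniform one using homogeneity. Since $X$ is homogeneous, $\con$ carries a transitive group of isometries (each coordinatewise map $(g_n)$ induced by isometries of $X$ is an isometry of $\con$, and given two points one can choose the $g_n$ to match their representatives). Fix the basepoint $e$, let $U$ witness semilocal simple connectivity at $e$, and pick $\rho>0$ with $B(e,\rho)\subseteq U$; transporting by isometries shows that every loop lying in a ball of radius $\rho$ about \emph{any} center bounds a disc in $\con$. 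Taking $\epsilon=\rho$, any loop $\gamma$ with $|\gamma|<\epsilon$ stays within $|\gamma|/2<\rho$ of one of its points, hence lies in such a ball and therefore bounds a disc.

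The heart of the argument is the claim that a rectifiable null-homotopic loop is partitionable. Given a continuous filling $f\colon\mathbb D\to\con$ with $f|_{\partial\mathbb D}=\gamma$, I would use that $f$ is uniformly continuous (as $\mathbb D$ is compact) to choose a fine cellular subdivision $P$ of $\mathbb D$ — say a triangulation with all boundary vertices on $\partial\mathbb D$ — so that (i) the image under $f$ of every cell has diameter less than $\eta$, and (ii) each arc of $\gamma$ between consecutive boundary vertices has arc length less than $\eta$. Defining $\Pi\colon P^{(0)}\to\con$ by $f$ on interior vertices and by $\gamma$ on boundary vertices, and extending as in \ref{extend}, each interior edge is a geodesic joining two points at distance less than $\eta$, while each boundary edge is a subarc of $\gamma$ of length less than $\eta$; as every piece is a geodesic polygon with at most three edges, $\mesh(\Pi)<3\eta$. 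Choosing $\eta<|\gamma|/6$, which is possible by uniform continuity, gives a $\frac{|\gamma|}{2}$-partition, so $\gamma$ is partitionable.

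Putting these together, for $\epsilon=\rho$ every loop of length less than $\epsilon$ bounds a disc and is hence partitionable, and \ref{conpart} completes the proof. I expect the main obstacle to be purely combinatorial: arranging that the subdivision really yields a partition in the formal sense, where every vertex has degree at least three and the $2$-cells meet along common boundary edges. A generic triangulation can have boundary vertices of degree two, and these must be removed by amalgamating their two incident edges (merging two consecutive subarcs of $\gamma$, or a subarc with a geodesic edge) into a single edge; I would need to check that this cleanup keeps each piece a geodesic polygon with a bounded number of edges, so that the mesh bound survives with $3\eta$ replaced by a fixed multiple of $\eta$, still forced below $|\gamma|/2$ by taking $\eta$ small.
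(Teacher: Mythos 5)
Your proposal is correct and follows essentially the same route as the paper's proof: use semilocal simple connectivity (transported by homogeneity) to fill every sufficiently short loop with a disc, use uniform continuity of the filling map to extract a fine finite partition with mesh below $|\gamma|/2$, and then invoke \ref{conpart}. The combinatorial cleanup you flag at the end (merging degree-two boundary vertices so the subdivision is a partition in the formal sense, at the cost of a fixed multiplicative constant on the mesh) is exactly the detail the paper elides with ``a $\nu$-partition of the disc gives us a finite $|\gamma|/2$-partition,'' and your treatment of it is sound.
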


It is not known whether the converse holds. The converse is Question \ref{quest1} with the \emph{uniform} hypothesis removed.

\begin{proof}[Proof of \ref{slsc}.]

Suppose that every loop  in $\con$ contained in a ball of radius $\epsilon$ is nulhomotopic in $\con$.  Then for every $\gamma$ of length at most $\epsilon$, there exists a continuous map of a disc into $\con$ which extends $\gamma$ and is necessarily uniformly continuous.  For sufficiently small $\nu$, a $\nu$-partition of the disc gives us a finite $|\gamma|/2$-partition for $\gamma$.  Then the result follows from \ref{conpart}

\end{proof}

\begin{thm}\label{uncountable}\ulabel{uncountable}{Theorem}
Let $X$ be a complete homogenous geodesic metric space. If $X$ is not $\epsilon$-coarsely loop divisible with respect to $\bigl(\omega,d\bigr)$ for every $\epsilon>0$, then $\con$ has uncountable fundamental group.
\end{thm}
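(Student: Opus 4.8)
The plan is to manufacture, at a sequence of scales tending to $0$, loops in $\con$ that admit no bisecting partition, to assemble them into a Hawaiian earring based at a single point, and then to show that this assembly is injective on fundamental groups, so that the uncountable Hawaiian earring group $\mathbb H$ embeds into $\pi_1(\con)$. Fix scales $\epsilon_1>\epsilon_2>\cdots\to 0$. For each $m$ the hypothesis says $X$ is not $\epsilon_m$-coarsely loop divisible, so exactly as in the proof of \ref{conpart} there are a $\delta_m>0$ and $X$-loops $\gamma^m_n$ with $\delta_m d_n\le|\gamma^m_n|\le\epsilon_m d_n$ and $\lim^\omega P\bigl(\gamma^m_n,|\gamma^m_n|/2\bigr)=\infty$. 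The limit loop $\gamma^m=\bigl(\gamma^m_n\bigr)$ is a genuine loop in $\con$ with $\delta_m\le|\gamma^m|\le\epsilon_m$, and it is \emph{not partitionable}: any $|\gamma^m|/2$-partition with $L$ pieces would induce, $\omega$-almost surely, a $|\gamma^m_n|/2$-partition of $\gamma^m_n$ with $L$ pieces, contradicting $\lim^\omega P\bigl(\gamma^m_n,|\gamma^m_n|/2\bigr)=\infty$. Hence $P\bigl(\gamma^m,|\gamma^m|/2\bigr)=\infty>5$, so \ref{O} gives $|\gamma^m|\le 8\diam{\gamma^m}$; together with $\diam{\gamma^m}\le|\gamma^m|/2\le\epsilon_m/2$ this shows each $\gamma^m$ is nondegenerate and that $\diam{\gamma^m}\to0$.

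Since $X$, and hence $\con$, is homogeneous, I translate every $\gamma^m$ to a fixed basepoint $o$. As $\diam{\gamma^m}\to0$ the loops shrink to $o$, so sending the circle $\mathbf a_m$ of the Hawaiian earring $\textbf E$ (notation of \ref{hawaiianearing}) onto $\gamma^m$ and the wedge point to $o$ defines a continuous map $h\colon\textbf E\to\con$. Because $\mathbb H=\pi_1(\textbf E)$ is uncountable, it suffices to prove that $h_\ast$ is injective, or even just that its image is uncountable.

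The heart of the argument is to detect a single circle inside a filling. Suppose $w\in\mathbb H$ is nontrivial and that $h(w)$ bounds a disc $f\colon\mathbb D\to\con$; then $f$ is uniformly continuous, so $h(w)$ admits a $\nu$-partition for every $\nu>0$. Fix a circle $\mathbf a_m$ that $w$ winds around nontrivially, and choose $\nu$ strictly between consecutive scales, $\epsilon_{m+1}\ll\nu\ll\epsilon_m$. At this resolution every smaller circle $\mathbf a_{m'}$ with $m'>m$ is swallowed by the $\nu$-ball about $o$, while the subarc of $\partial\mathbb D$ carrying $\gamma^m$ is cut into pieces of mesh $<\nu<|\gamma^m|/2$. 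I would then close this induced partition up across the basepoint region into an honest $|\gamma^m|/2$-partition of the single loop $\gamma^m$, contradicting $P\bigl(\gamma^m,|\gamma^m|/2\bigr)=\infty$. Thus $h(w)$ is noncontractible, $h_\ast$ is injective, and $\pi_1(\con)\supseteq\mathbb H$ is uncountable.

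The main obstacle is precisely this closing-up step: converting the restriction of the fine disc $f$ to the $\gamma^m$-subarc into a bisecting partition of $\gamma^m$ \emph{by itself}. Concretely, one must produce inside the partitioned disc a separating arc joining the two basepoint-ends of the $\gamma^m$-subarc whose image stays within the $\nu$-ball about $o$, so that the subdisc it cuts off fills $\gamma^m$ up to a negligible correction while the induced partition keeps mesh below $|\gamma^m|/2$. This is exactly where the multiplicative gap $\epsilon_{m+1}\ll\nu\ll\epsilon_m$ and the indivisibility bound $|\gamma^m|\le 8\diam{\gamma^m}$ from \ref{O} have to be used quantitatively, both to force the separating arc to exist and to keep circles of comparable scale from producing cancellation.
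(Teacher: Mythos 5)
Your construction of the non-partitionable limit loops $\gamma^m$ and of the Hawaiian earring map $h$ is sound (it is essentially how the paper builds such a map in the proof of \ref{notfree}), and each individual $\gamma^m$ is indeed essential by the uniform-continuity argument of \ref{slsc}. But the step you yourself flag as the main obstacle --- closing up the fine partition of the $\gamma^m$-subarc of a null-homotopy of $h(w)$ into an honest $|\gamma^m|/2$-partition of $\gamma^m$ by itself --- is a genuine gap, not a technicality. A null-homotopy $f$ of $h(w)$ in $\con$ is merely uniformly continuous; nothing forces the disc to contain a separating arc joining the two basepoint-ends of the $\gamma^m$-subarc whose image stays in the $\nu$-ball about $o$: the filling may travel arbitrarily far from $o$, and if $w$ also winds around circles $\mathbf a_{m'}$ with $m'<m$, those boundary arcs are large and are not swallowed by the $\nu$-ball either, so your resolution argument does not even begin for general $w$. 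This is precisely the difficulty the paper engineers around in \ref{notsimple}: there the target is replaced by the topological cone $Y_n$ over the union of the smaller loops, so that $A_n$ separates $Y_n$ and components of $h^{-1}(\hat A_n)$ can be pushed down cone lines; that added structure is what makes the ``null-homotopy yields a partition of a single loop'' detection work, and it is unavailable in $\con$ itself. In effect you are asserting a $\pi_1$-injective embedding of the Hawaiian earring into $\con$, which is stronger than the theorem and is nowhere proved (or needed) in the paper.

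The paper's actual proof avoids injectivity entirely and is shorter. Assuming $\pi_1\bigl(\con\bigr)$ is countable, it applies the Cannon--Conner theorem (\ref{artin}) to the identity homomorphism: the images $G_n$ of $\pi_1\bigl(B_{1/n}(e),e\bigr)$ in $\pi_1\bigl(\con\bigr)$ are eventually constant, so for $\epsilon=1/N$ every loop $\gamma$ of length less than $\epsilon$ is homotopic \emph{in the whole cone} to a loop lying in an arbitrarily small ball $B_{1/m}(e)$. Such a homotopy is a uniformly continuous map of a compact annulus, and a fine partition of the annulus does produce a finite $|\gamma|/2$-partition of $\gamma$: here the capping-off is unproblematic because the inner boundary loop has small \emph{diameter} by stabilization, regardless of where the homotopy travels --- exactly the control your separating-arc step lacks. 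Then \ref{conpart} makes $X$ $\epsilon$-coarsely loop divisible, contradicting the hypothesis. If you want to salvage your outline, replace the injectivity claim by this countability-versus-stabilization argument; your loops $\gamma^m$ then serve only to witness, via \ref{conpart}, that no such stabilization can occur.
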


The proof will require the following result of Cannon and Conner.

\begin{thm}[Cannon, Conner \cite{cc1}]\label{artin}\ulabel{artin}{Theorem}
Let $X$ be a topological space, let $\phi :\pi_1(X,x_0) \to L$ be a
homomorphism to a group $L$, $U_1\supset U_2 \supset\cdots$ be a
countable local basis for $X$ at $x_0$, and $G_i$ be the image of the
natural map from $\pi_1(U_i, x_0)$ into $\pi_1(X,x_0)$. If $L$ is countable,
then the sequence $\phi(G_1)\supset \phi(G_2)\supset\cdots$ is eventually
constant.
\end{thm}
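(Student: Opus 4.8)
The plan is to argue by contradiction, converting a failure of eventual constancy into an uncountable family of pairwise distinct elements of $L$, produced by infinite concatenations of small loops. The countability of $L$ then forces the contradiction.

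First I would reduce to a strictly decreasing chain. If $\phi(G_1)\supseteq\phi(G_2)\supseteq\cdots$ is not eventually constant, then the set of indices $i$ with $\phi(G_{i+1})\subsetneq\phi(G_i)$ must be infinite; otherwise all strict drops occur below some $N$ and the chain is constant from $N$ on. Passing to a subsequence of the basis at these drop indices (still a countable local basis at $x_0$, since the $U_i$ are nested) I may assume $\phi(G_{i+1})\subsetneq\phi(G_i)$ for every $i$. For each $i$ I then fix a witness: an element $g_i\in G_i$ with $\phi(g_i)\notin\phi(G_{i+1})$, together with a loop $\gamma_i$ based at $x_0$ whose image lies in $U_i$ and which represents $g_i$ (available because $G_i$ is by definition the image of $\pi_1(U_i,x_0)$).

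Next, for each subset $S\subseteq\N$ I would form the infinite concatenation $w_S$ obtained by running the $\gamma_k$ with $k\in S$, in increasing order of $k$, on a sequence of subintervals of $[0,1]$ accumulating at $1$, and sending $1$ to $x_0$. Continuity on $[0,1)$ is automatic from the finite partial concatenations, and continuity at $1$ is exactly where the \emph{local basis} hypothesis is used: given a neighborhood $V$ of $x_0$, pick $N$ with $U_N\subseteq V$; then every $\gamma_k$ with $k\geq N$ has image in $U_k\subseteq V$, so all but finitely many factors of $w_S$ lie in $V$. Thus each $w_S$ is a genuine loop and defines a class $[w_S]\in\pi_1(X,x_0)$. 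The heart of the argument is then to show that $S\mapsto\phi([w_S])$ is injective on $2^{\N}$. Given $S\neq S'$, let $i$ be the least index at which they differ, say $i\in S$ and $i\notin S'$. Splitting each concatenation at the interval carrying index $i$ factors the classes as $[w_S]=h\,g_i\,a$ and $[w_{S'}]=h\,b$, where $h$ is the common finite head coming from the shared indices below $i$, and $a,b$ are the tails built from indices exceeding $i$; crucially $a$ and $b$ are classes of loops contained in $U_{i+1}$, so $a,b\in G_{i+1}$. If $\phi([w_S])=\phi([w_{S'}])$, cancelling $\phi(h)$ yields $\phi(g_i)=\phi(b)\phi(a)^{-1}\in\phi(G_{i+1})$, contradicting the choice of $g_i$. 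Hence the map is injective, $|L|\geq 2^{\aleph_0}$, and the countability of $L$ is violated; so the chain must have been eventually constant after all.

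The main obstacle is making the infinite concatenation and its head/tail factorization precise. I must verify that cutting $w_S$ at the chosen point gives, up to reparametrization, the concatenation of a finite head loop with a single tail loop, so that $[w_S]$ genuinely factors as the product of their classes in $\pi_1(X,x_0)$, and that the tail loop has image in $U_{i+1}$ so its class lies in $G_{i+1}$. These verifications are routine for finite concatenations but require care here because the factors are infinite in number and accumulate at the base point.
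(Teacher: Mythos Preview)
The paper does not supply a proof of this theorem; it is quoted as a result of Cannon and Conner \cite{cc1} and used as a black box in the proofs of \ref{uncountable} and \ref{notfree}. So there is no ``paper's own proof'' to compare against.

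That said, your argument is correct and is essentially the standard one. The reduction to a strictly decreasing chain is legitimate because a cofinal subsequence of a nested local basis is again a local basis. The infinite concatenation $w_S$ is continuous at the accumulation point precisely by the local basis hypothesis, as you note. The head/tail factorization is the heart of the matter and you have it right: if $i$ is the least index in the symmetric difference, the common head $h$ cancels, and the tails $a,b$ are represented by loops lying entirely in $U_{i+1}$ (each remaining factor $\gamma_k$ with $k>i$ has image in $U_k\subset U_{i+1}$), hence $a,b\in G_{i+1}$. The conclusion $\phi(g_i)=\phi(b)\phi(a)^{-1}\in\phi(G_{i+1})$ then contradicts the choice of $g_i$. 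The only point worth tightening in a final write-up is the one you already flag: the associativity of the infinite concatenation, i.e.\ that $[w_S]=[h][g_i][a]$ as elements of $\pi_1(X,x_0)$, which follows by an explicit reparametrization since the head is a finite product and the tail is itself a single loop based at $x_0$.
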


\proof[Proof of \ref{uncountable}]

Let $X$ be a complete homogenous geodesic metric space.  Suppose that $X$ is not $\epsilon$-coarsely loop divisible for any $\epsilon$ and $\con=X^\omega$ has countable fundamental group. Let $i_*$ be the identity map on $\pi_1(X^\omega,e)$. \ref{artin} implies that $i_*(G_n)$ is eventually constant where $G_n$ is the image of the natural map from $\pi_1(B_{1/n}(e), e)$ into $\pi_1(X^\omega,e)$.

Fix $N$ such that this sequence is constant for $m\geq N$, and let $\epsilon = 1/N$. Therefore every loop in $B_{1/N}(\tilde x)$ can be homotoped into $B_{1/m}(e)$ for any $m\geq N$.  In general, this will not imply that the ball is simply connected. However, it does imply that every loop $\gamma$ of length less than $\epsilon$ has a partition with finitely many pieces and mesh at most $\frac{|\gamma|}{2}$.  Then \ref{conpart} implies that $X$ is $\epsilon$-coarsely loop divisible which is a contradiction.

\endproof

\begin{thm}\label{notfree}\ulabel{notfree}{Theorem}
Let $X$ be a complete homogenous geodesic metric space. If $X$ is not $\epsilon$-coarsely loop divisible with respect to the pair $\bigl(\omega,d\bigr)$ for every $\epsilon>0$, then the fundamental group of $\con$ is not free.  In particular, if $\pi_1\Bigl(\con\Bigr) = *_j G_j$ for some free product of groups $G_j$, then there exists a $j$ such that $G_j$ is uncountable and not free.
\end{thm}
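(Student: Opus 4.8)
The plan is to reduce the whole statement to a single assertion: that $\pi_1\bigl(\con\bigr)$ cannot be a free product of countable groups. Granting this, the full theorem follows at once. Suppose $\pi_1\bigl(\con\bigr)=*_j G_j$ and that every factor $G_j$ is either countable or free. Each free factor may be rewritten as a free product of copies of $\Z$ by choosing a basis, so after refining we obtain a decomposition $\pi_1\bigl(\con\bigr)=*_k H_k$ in which every $H_k$ is countable (being either one of the original countable factors or a copy of $\Z$). The reduced assertion then produces a contradiction, so the hypothesis that every factor is countable or free is untenable; that is, some $G_j$ is uncountable and not free. Taking the trivial one-factor decomposition recovers the headline statement that $\pi_1\bigl(\con\bigr)$ is not free.

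So I would concentrate on proving that, under the hypotheses, $\pi_1\bigl(\con\bigr)$ is not a free product of countable groups. First I would extract, exactly as in the proof of \ref{uncountable}, the key local consequence of non-divisibility: writing $G_n$ for the image of $\pi_1\bigl(B_{1/n}(e),e\bigr)$ in $\pi_1\bigl(\con\bigr)$, the descending chain $G_1\supseteq G_2\supseteq\cdots$ is not eventually constant. This is precisely the contrapositive of the mechanism used there, since stabilization of the chain produces, via \ref{conpart}, finite $\tfrac{|\gamma|}{2}$-partitions of all short loops and hence $\epsilon$-coarse loop divisibility for $\epsilon=1/N$. Thus I may fix a strictly decreasing subsequence $G_{n_1}\supsetneq G_{n_2}\supsetneq\cdots$ and choose essential small loops $a_i$ representing classes in $G_{n_i}\setminus G_{n_{i+1}}$; by \ref{O} and homogeneity these may be taken with $|a_i|=O\bigl(\diam{a_i}\bigr)$ and $\diam{a_i}\to 0$.

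Next, assuming $\pi_1\bigl(\con\bigr)=*_k H_k$ with each $H_k$ countable, I would play these essential small loops against the finiteness of syllable length in a free product. The point is that a free product records, in each normal form, only finitely many factors, whereas completeness of $\con$ together with $\diam{a_i}\to 0$ lets one concatenate the $a_i$ into genuine limit loops whose tails stay essential at every scale---an ``infinitary'' feature of the local fundamental group. To force the collision I would apply \ref{artin} to the retractions $\rho\colon *_k H_k\to *_{k\in K_0}H_k$ onto countable sub-free-products: for every countable $K_0$ the chain $\rho(G_n)$ is eventually constant. Since any countable family of elements of $*_k H_k$ is supported on countably many factors, a suitable $K_0$ retains the chosen classes while keeping the target countable, so \ref{artin} forces the retracted chain to stabilize. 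The plan is then to show that, because the $a_i$ are small, this retracted stabilization pulls back to stabilization of the original chain $G_{n_i}$, contradicting its strict descent.

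The hard part will be precisely this pull-back step, where the free-product structure fights back through conjugation: an element such as $hxh^{-1}$ with $h$ in a factor outside $K_0$ is sent by $\rho$ to $x$, so retraction can create elements and a priori destroy the strict descent. I expect this to be resolved exactly where homogeneity and \ref{O} enter: since each $a_i$ lies in a ball of radius comparable to $|a_i|\to 0$, any path conjugating it back to the basepoint is itself short, so both the essential syllables and their conjugators are forced into the ``tail'' factors accumulating at $e$. Choosing $K_0$ to be the countable set of tail factors supporting the $a_i$ and their conjugators then makes $\rho$ the identity on the relevant classes, transporting the non-stabilization faithfully. With the pull-back in hand, \ref{artin} yields a chain that is simultaneously eventually constant and strictly descending---the desired contradiction---completing the proof that $\pi_1\bigl(\con\bigr)$ is not a free product of countable groups, and with it the theorem.
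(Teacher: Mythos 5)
Your setup is sound and parallels the paper: the non-stabilization of the chain $G_n$ of images of $\pi_1\bigl(B_{1/n}(e),e\bigr)$ follows from \ref{conpart} exactly as you say, and small essential loops $a_i$ with controlled length exist by \ref{O} and homogeneity. The genuine gap is the pull-back step, and it is fatal as proposed. From \ref{artin} applied to the retraction $\rho$ you get that $\rho(G_{n_i})$ is eventually constant, but to contradict strict descent you would need $[a_i]\notin\rho(G_{n_{i+1}})$, and this does \emph{not} follow from $[a_i]\notin G_{n_{i+1}}$: since $\rho$ is far from injective, some $g\in G_{n_{i+1}}$ with $\rho(g)=[a_i]$ may exist even though $[a_i]\notin G_{n_{i+1}}$ (your own example $\rho(hxh^{-1})=x$ is exactly such a mechanism). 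Your proposed repair --- that because the loops $a_i$ are metrically small, ``the essential syllables and their conjugators are forced into the tail factors accumulating at $e$'' --- has no content: the decomposition $\pi_1\bigl(\con\bigr)=*_k H_k$ is purely algebraic, the factors $H_k$ are not carried by any subspaces of $\con$, and there is no relation between the diameter of a loop and the support or conjugators of its class in an arbitrary abstract free product decomposition. Moreover $G_{n_{i+1}}$ may be uncountable, so its elements' normal forms cannot be confined to any countable $K_0$ chosen in advance; no choice of $K_0$ blocks the creation of $[a_i]$ in $\rho(G_{n_{i+1}})$.

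What you are implicitly trying to re-prove by hand is precisely the content of Eda's theorem (\ref{conjugate}), which is the tool the paper uses to tame conjugation in free products. The paper's route is: assemble the non-partitionable null loops $\alpha_n$ (all based at $e$, with $|\alpha_n|<|\alpha_{n-1}|/2$) into a continuous map $f\colon \mathbf{E}\to\con$ from the Hawaiian earring; if $f_*(\mathbb H_n)$ were free then \ref{freeimage} would make it of finite rank hence countable, and if countable then \ref{artin} applied to $\mathbf{E}$ (not to the cone) forces $f_*(\mathbb H_n)$ to stabilize, contradicting the choice of the $\alpha_n$; so every $f_*(\mathbb H_n)$ is uncountable and not free, and \ref{conjugate} places some $f_*(\mathbb H_n)$ inside a conjugate of a single factor $G_j$, which is therefore uncountable and, by Nielsen--Schreier, not free. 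Your reduction via writing free factors as free products of copies of $\Z$ is fine, but without \ref{conjugate} (or an equivalent result about homomorphisms from $\mathbb H$ to free products) the retraction argument cannot be completed.
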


We will use the following two results in the proof of \ref{notfree}.

\begin{thm}\label{freeimage}\ulabel{freeimage}{Theorem}
Suppose that $\phi:\mathbb H \to \mathbb F$ is a surjective homomorphism where $\mathbb
F$ is a free group.  Then  $\mathbb F$ has finite rank.
\end{thm}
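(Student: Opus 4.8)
The plan is to reduce the theorem to the statement that free groups are \emph{noncommutatively slender}: I will show that $\phi$ must kill one of the tail subgroups $\mathbb H_n=\pi_1(\mathbf E_n)$ from \ref{hawaiianearing}, after which a short argument forces $\mathbb F$ to be finitely generated and hence of finite rank. To see the reduction, record the retraction obtained by splitting $\mathbf E$ at the $n$-th circle: collapsing the sub-earring $\mathbf E_n$ to the basepoint sends $\mathbf E$ to a wedge of the first $n-1$ circles, and since $(\mathbf E,\mathbf E_n)$ is a good pair this induces a retraction $\rho_n\colon\mathbb H\to F_{n-1}$ onto the free group on the first $n-1$ circles with $\ker\rho_n=\infgen{\mathbb H_n}$. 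Suppose now $\phi(\mathbb H_n)=1$ for some $n$. For $h\in\mathbb H_n$ and $g\in\mathbb H$ we get $\phi(ghg^{-1})=\phi(g)\cdot 1\cdot\phi(g)^{-1}=1$, so $\phi$ annihilates every normal generator of $\infgen{\mathbb H_n}=\ker\rho_n$ and therefore factors as $\phi=\psi\circ\rho_n$ for some $\psi\colon F_{n-1}\to\mathbb F$. Since $\phi$ is onto so is $\psi$, whence $\mathbb F=\im\psi$ is generated by the $n-1$ images of a free basis of $F_{n-1}$; comparing abelianizations, a free group with finitely many generators has finite rank, so $\mathbb F$ has finite rank.

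It remains to produce an $n$ with $\phi(\mathbb H_n)=1$, and this is the crux. Arguing by contradiction, suppose $\phi(\mathbb H_n)\neq1$ for every $n$ and choose $g_n\in\mathbb H_n$ with $f_n:=\phi(g_n)\neq1$. The characteristic feature of $\mathbf E$ is that a sequence of loops lying in the shrinking sub-earrings $\mathbf E_n$ concatenates to an honest loop; hence the infinite products $W_N:=g_Ng_{N+1}g_{N+2}\cdots$ are well-defined elements of $\mathbb H$, and reparametrization gives $W_N=g_N\,W_{N+1}$, so that $\phi(W_N)=f_N\,\phi(W_{N+1})$ in $\mathbb F$ with every $f_N\neq1$. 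More generally the same construction yields a coherent element $\prod_{n\in A}g_n$ for every $A\subseteq\mathbb N$, so $\phi$ transports the full infinite-product ($\sigma$-product) structure of $\mathbb H$ into $\mathbb F$.

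The final and hardest step is to show that a free group cannot absorb this structure, and I expect it to be the main obstacle. Here I would pass to the action of $\mathbb F$ on its Cayley tree and analyze reduced words: the images $\phi(W_N)$ and the various subset-products must be mutually compatible under left multiplication by the fixed nontrivial elements $f_N$, and a tree leaves no room for such an infinitely iterated, branching family unless cofinitely many factors are trivial, contradicting $f_N\neq1$. This is exactly the noncommutative slenderness of free groups, a theorem of Eda, and is the one genuinely delicate point; I would either invoke it directly or carry out the cancellation analysis on the tree sketched above. Once some $n$ with $\phi(\mathbb H_n)=1$ is in hand, the reduction of the first paragraph completes the proof.
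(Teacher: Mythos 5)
Your proof is correct, but it takes a genuinely different route from the paper's. The paper does not argue \ref{freeimage} from scratch either: it derives it from the Cannon--Conner result \ref{artin} (for a homomorphism to a \emph{countable} group, the images of $\pi_1$ of a neighborhood basis are eventually constant), pointing to \cite{cs} for the details; this is economical in context, since \ref{artin} is already the workhorse of \ref{uncountable} and \ref{notfree}. You instead reduce to Eda's theorem that free groups are noncommutatively slender \cite{eda6} --- precisely the statement you isolate, that any homomorphism $\mathbb H\to\mathbb F$ kills some tail $\mathbb H_n$ --- and then factor through the retraction $\rho_n$ to conclude $\mathbb F$ is finitely generated, hence of finite rank. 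Your route imports a stronger external theorem but buys a cleaner structural conclusion ($\phi$ factors through a finite wedge) and needs no countability hypothesis on $\mathbb F$ at any stage, whereas the \ref{artin} route must first arrange a countable target.

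Two soft spots, neither fatal. First, ``$(\mathbf E,\mathbf E_n)$ is a good pair'' justifies nothing here: good pairs govern homology of quotients, not $\pi_1$ kernels. The identity $\ker\rho_n=\infgen{\mathbb H_n}$ is true, but what it actually rests on is the word calculus of $\mathbb H$: by uniform continuity a loop traverses each fixed circle $\mathbf a_i$ only finitely often, so every $g\in\mathbb H$ is a \emph{finite} alternating product $u_0x_1u_1\cdots x_ku_k$ with $u_i\in\mathbb H_n$ and $x_j$ letters from the first $n-1$ circles, and telescoping rewrites any $g\in\ker\rho_n$ as a finite product of conjugates of elements of $\mathbb H_n$. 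In fact this same finite-alternation fact gives $\mathbb H=\langle F_{n-1},\mathbb H_n\rangle$, so once $\phi(\mathbb H_n)=1$ you get $\mathbb F=\langle\phi(a_1),\dots,\phi(a_{n-1})\rangle$ directly and can skip the kernel computation altogether. Second, your tree-cancellation sketch is not close to a proof of the slenderness step: the relations $\phi(W_N)=f_N\,\phi(W_{N+1})$ with all $f_N\neq 1$ are consistent in any group (already in $\mathbb Z$), so no contradiction can be extracted from them alone; the genuine argument of Higman \cite{hig} and Eda needs the full continuum-sized branching family $\prod_{n\in A}g_n$, $A\subseteq\mathbb N$, together with a cancellation and cardinality analysis. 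Since you explicitly fall back on invoking Eda's theorem, the proof stands --- but it stands on that citation, just as the paper's stands on \ref{artin} and \cite{cs}.
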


If we consider homomorphisms from the natural inverse limit containing $\mb H$ to free groups, then this is a theorem of Higman \cite{hig}.  When we consider homomorphism from $\mb H$, this is a consequence of \ref{artin} and a proof can be found in \cite{cs}.

\begin{thm}[\cite{eda6}]\label{conjugate}\ulabel{conjugate}{Theorem}
Suppose that $\phi:\mathbb H \to *_jG_j$ is a homomorphism.  Then there exists an $n$ such that $\phi(\mathbb H_n)$ is contained in a subgroup which is
conjugate to $G_j$ for some $j$.
\end{thm}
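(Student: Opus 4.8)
The plan is to convert the statement into a fixed-point problem for an action on a tree and then play that action against the infinite-product structure of $\mathbb{H}$. Realize $*_jG_j$ as the fundamental group of a graph of groups on a star: a central vertex carrying the trivial group, joined by edges with trivial edge groups to vertices carrying the factors $G_j$. In the associated Bass--Serre tree $T$ every vertex stabilizer is either trivial (the central-type vertices, indexed by elements of $G$) or a conjugate $cG_jc^{-1}$ of a single factor (the factor-type vertices), and all edge stabilizers are trivial. Composing the action with $\phi$ yields an action of $\mathbb{H}$ on $T$, and since the trivial subgroup is contained in any conjugate of a factor, the theorem reduces to a single assertion: there exists $n$ such that $\phi(\mathbb{H}_n)$ fixes a vertex of $T$. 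This is the right target because an element of a free product fixes a vertex of $T$ exactly when it is conjugate into a factor, so ellipticity in $T$ is precisely the property named in the conclusion.

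First I would record the two tools. On the $G$ side I use the normal form in a free product and its syllable length $|g|$, together with the standard fact that along a reduced sequence cancellation can occur only between adjacent syllables, so partial products cannot collapse unexpectedly. On the $\mathbb{H}$ side I use that any sequence of loops $w_1,w_2,\dots$ with $w_k$ supported on the circles $\mathbf{a}_i$ for $i$ in a block $[n_k,n_{k+1})$ and $n_k\to\infty$ defines an honest element $\prod_k w_k\in\mathbb{H}$, and that the same holds for every tail, so that $\phi\bigl(\prod_k w_k\bigr)=\phi(w_1)\cdots\phi(w_{K-1})\,\phi\bigl(\prod_{k\geq K}w_k\bigr)$ for each $K$. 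Now argue by contradiction: suppose no $\mathbb{H}_n$ fixes a vertex. A group of tree automorphisms with a bounded orbit fixes the centre of the subtree it spans, so for every $n$ the orbit $\phi(\mathbb{H}_n)\cdot v_0$ of a fixed base vertex is unbounded. Using this, after reducing to finitely supported loops, I would select loops $w_k$ with pairwise disjoint increasing supports whose images displace $v_0$ more and more and — after conjugating each $w_k$ inside its block to pin down the initial and terminal syllables of $\phi(w_k)$ — so that no cancellation occurs between $\phi(w_k)$ and $\phi(w_{k+1})$. Then the partial products have syllable length tending to infinity, while controlling the tails so they cannot erode the stabilized prefix forces $\bigl|\phi\bigl(\prod_k w_k\bigr)\bigr|=\infty$, contradicting that $\phi\bigl(\prod_k w_k\bigr)$ is a single element with finite normal form.

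The main obstacle is exactly this cancellation control: $\phi$ is an arbitrary homomorphism, not assumed continuous, so I cannot push normal forms through $\phi$ nor assume large displacements are realized by finitely supported loops. The crux is therefore a careful selection argument: a pigeonhole on which coset the accumulating syllables lie in, conjugation of the building blocks $w_k$ so that consecutive images are aligned and grow without backtracking, and a diagonal choice of the blocks $[n_k,n_{k+1})$ guaranteeing that each tail begins with a syllable that cannot cancel the accumulated prefix. I must also rule out the possibility that $\mathbb{H}$ fixes an \emph{end} of $T$ rather than a vertex: an action with a fixed end but no fixed vertex produces elements translating toward that end, and the same infinite-product length blow-up applies once all the $w_k$ are oriented toward the end. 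With both cases excluded, the fixed vertex delivers $n$ with $\phi(\mathbb{H}_n)\leq cG_jc^{-1}$ for some $c$ and $j$, which is the assertion of \ref{conjugate}.
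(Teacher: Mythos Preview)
The paper does not give its own proof of \ref{conjugate}; the theorem is quoted from Eda \cite{eda6} and invoked as a black box in the proof of \ref{notfree}. There is therefore no in-paper argument against which to weigh your proposal.

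On the proposal itself: the Bass--Serre reformulation is the natural move, and the infinite-concatenation strategy is in the spirit of Eda's own arguments, which exploit precisely the tension between finite normal forms in $*_jG_j$ and the capacity of $\mathbb H$ to realize genuinely infinite words. You have, however, correctly located the gap in your own outline. The entire content of the proof is the cancellation control between $\phi(w_k)$, $\phi(w_{k+1})$, and the tail $\phi\bigl(\prod_{j>K}w_j\bigr)$; your phrases ``pigeonhole on cosets'', ``conjugate to align syllables'', and ``diagonal choice of blocks'' name the moves without executing them, and the case of a global fixed end is likewise only gestured at. As written this is a plan, not a proof. One remark that may help: your worry about reducing to finitely supported loops is unnecessary. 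If $w_k\in\mathbb H_{n_k}$ with $n_k\to\infty$ then the infinite concatenation $w_1w_2\cdots$ is already a legitimate element of $\mathbb H$, regardless of whether each $w_k$ is finitely supported, and the telescoping identity you wrote holds on the nose; the selection can therefore be made directly among arbitrary elements of the $\mathbb H_{n_k}$.
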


\begin{proof}[Proof of \ref{notfree}.]
Since $X$ is not $\epsilon$-coarsely loop divisible with respect to $\bigl(\omega,d\bigr)$ for every $\epsilon$, we may find a null sequence of loops $\alpha_n$ in $\con$ such that $\alpha_n$ has no finite $\frac{|\alpha_n|}{2}$-partition.  Since $\con$ is transitive by isometries, we may choose $\alpha_n$ such that $\alpha_i(0) = \alpha_j(0)= e$ for all $i,j$.  By passing to a subsequence, we may assume that $|\alpha_n|< \frac{|\alpha_{n-1}|}2$.  This implies that the ball of radius $|\alpha_n|$ does not contain a loop which is homotopic to $\alpha_i$ for $i<n$.  Since $\alpha_n$ forms a null sequence of loops and $\alpha_i(0) = \alpha_j(0)$ for all $i,j$, there exists a continuous map $f$ from $\textbf E$ to $\con$ such that $f(\mathbf a_n) = \alpha_n$.

Suppose that $\pi_1\bigl(\con, (x_n)\bigr)$ was free.  Then  $f_*(\mb H)$ would be free and \ref{freeimage} would then imply that it has finite rank.  Hence $f_*(\mb H)$ is countable and \ref{artin} implies that $f_*\Bigl(\pi_1\bigl(\textbf E_n, (0,0)\bigr)\Bigr)$ as a sequence in $n$ is eventually constant which contradicts our choice of $\alpha_n$.

Thus for every $n$, $f_*(\mathbb H_n)$ is uncountable and not free.  The last claim of the theorem follows from \ref{conjugate}.

\end{proof}

\begin{prop}\label{notsimple}\ulabel{notsimple}{Proposition}

Let $X$ be a complete homogenous geodesic metric space. If $X$ is not $\epsilon$-coarsely loop divisible with respect to $\bigl(\omega,d\bigr)$ for every $\epsilon>0$, then the fundamental group of $\con$ is not simple.

\end{prop}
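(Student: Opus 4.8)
The plan is to argue by contradiction: assume $G:=\pi_1\bigl(\con\bigr)$ is simple and manufacture a proper nontrivial normal subgroup. The backbone is an abstract reduction: it suffices to produce a single nontrivial homomorphism $\Phi\colon G\to C$ onto a countable group $C$. Indeed, \ref{uncountable} makes $G$ uncountable, so $G/\ker\Phi\cong\im\Phi$ is a nontrivial countable group and therefore $\ker\Phi$ is a proper subgroup of cardinality $|G|$, hence a proper nontrivial normal subgroup, contradicting simplicity. Two preliminary remarks shape the search for $\Phi$. First, a simple abelian group is cyclic of prime order and so finite, whence \ref{uncountable} already forces $G$ to be nonabelian and therefore perfect. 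Second, because $G$ is perfect, every homomorphism from $G$ to an abelian group is trivial; so $C$ must be taken nonabelian, and no plain winding number valued in $\mathbb Z$ can serve as $\Phi$.

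To build $\Phi$ I would reuse the map $f\colon\textbf E\to\con$ from the proof of \ref{notfree}, with $f(\mathbf a_n)=\alpha_n$ for a null sequence of loops having no finite $\frac{|\alpha_n|}{2}$-partition and with $f_*(\mathbb H_n)$ uncountable and not free for every $n$. Non-partitionability keeps each $\alpha_n$ from being nullhomotopic (a nullhomotopic loop admits a fine partition, as in the proof of \ref{slsc}), so $g_n:=f_*(\mathbf a_n)\ne 1$. The natural nonabelian, countable invariant is the retraction $p\colon\textbf E\to\mathbf a_1\vee\mathbf a_2$ that collapses $\textbf E_3$ to the wedge point; it induces a surjection $p_*\colon\mathbb H\to F_2$ onto the free group of rank two. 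The goal is to realize a homomorphism $P\colon G\to F_2$ compatible with this data, so that $P\circ f_*=p_*$ and in particular $P(g_1)$ is a free generator, and then to take $\Phi=P$.

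Realizing $P$ splits into two tasks. The first is descent across $f_*$: one must check $\ker f_*\subseteq\ker p_*$, i.e. that a combination of the $\alpha_n$ which is nullhomotopic in $\con$ already projects trivially to the wedge of the first two loops. This is the assertion that the wild loops $\alpha_1,\alpha_2$ are homotopically independent of the tail $\alpha_3,\alpha_4,\dots$ inside the cone, and it is exactly the sort of rigidity that non-partitionability should supply: a nontrivial relation among the $\alpha_n$ would let one homotope $\alpha_1$ or $\alpha_2$ into loops supported arbitrarily deep in the tail, and, mirroring the argument of \ref{uncountable} through \ref{artin} and \ref{conpart}, such homotopies would produce finite $\frac{|\gamma|}{2}$-partitions of $\alpha_1$ or $\alpha_2$ and hence contradict the hypothesis. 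Granting descent, $p_*$ factors through a homomorphism defined on the subgroup $f_*(\mathbb H)\le G$.

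The main obstacle, which I expect to contain essentially all of the difficulty, is the second task: extending this homomorphism from $f_*(\mathbb H)$ to all of $G$. A homomorphism on a subgroup does not by itself cut out a normal subgroup of the ambient group, and there is no formal reason for $p_*$ to extend. I would attempt the extension using the homogeneity of $\con$, transporting the local data recorded by $P$ consistently around the cone, and, as above, using \ref{artin} together with \ref{conpart} to rule out the degenerations that would block extension. Completing this extension rigorously is the crux of the proof; once $\Phi=P\colon G\to F_2$ is in hand, the abstract reduction of the first paragraph closes the argument.
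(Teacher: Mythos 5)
Your reduction in the first paragraph is sound, but the argument never closes: the step you correctly identify as the crux --- extending the homomorphism from $f_*(\mathbb H)$ to all of $G=\pi_1\bigl(\con\bigr)$ --- is a genuine gap, and nothing you invoke can fill it. A homomorphism defined on a subgroup has no extension mechanism in general; $f_*(\mathbb H)$ is neither normal in $G$ nor a retract of it, and homogeneity of the cone only supplies basepoint-moving homeomorphisms, which conjugate subgroups but do not produce a compatible system of values on loops outside the image of $f$. Likewise \ref{artin} and \ref{conpart} constrain the images of neighborhood subgroups under an \emph{already given} homomorphism to a countable group; they cannot manufacture one. The descent step is also unverified as stated: elements of $\ker f_*$ can be represented by arbitrary infinite words in $\mathbb H$, and the heuristic that a relation in $\con$ would ``homotope $\alpha_1$ into the tail'' and yield a partition is not an argument --- the hypotheses control partitionability of each $\alpha_n$, not relations among the $\alpha_n$ inside the cone.

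The paper avoids the extension problem entirely by changing the \emph{space} rather than extending a map of groups: it cones off the tail of the null sequence. With $A_n$ the union of the images of $\alpha_i$ for $i>n$, let $Y_n$ be $X^\omega=\con$ together with the topological cone $\hat A_n$ attached. The inclusion $\iota_n\colon X^\omega\to Y_n$ induces a homomorphism on the \emph{whole} of $\pi_1$ for free; its kernel is nontrivial because the loops $\alpha_i$, $i>n$, are coned off yet are essential in $X^\omega$ (being non-partitionable). Simplicity is then contradicted once $\iota_{n*}$ is shown nontrivial, which is the paper's Claim: if $h\colon\mathbb D\to Y_n$ were a nullhomotopy of $\iota_n(\alpha_i)$ for some $i\leq n$, one pushes the components of $h^{-1}(\hat A_n)$ that miss the cone point down into $A_n$, observes that only finitely many components meet the cone point, and concludes that the component $C$ of $h^{-1}(X^\omega)$ containing $\partial\mathbb D$ is a planar annulus of finite genus whose inner boundary circles map into $A_n$ and so have image diameter at most $|\alpha_{n+1}|<\frac{|\alpha_n|}{2}$; restricting $h$ to $C$ then yields a finite $\frac{|\alpha_i|}{2}$-partition of $\alpha_i$, contradicting its choice. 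Note that this contradicts simplicity directly via a homomorphism with nontrivial kernel and nontrivial image --- the target $\pi_1(Y_n)$ need not be countable --- so your countable-quotient reduction, and with it the whole search for a map onto $F_2$, is unnecessary; the geometric content your sketch gestures at (non-partitionability blocking homotopies into the tail) is exactly what the paper makes rigorous, but in $Y_n$ rather than in $\con$ itself.
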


\begin{proof}
Let $X^\omega = \con$ and $\alpha_i$ be a null sequence of loops in $X^\omega$ constructed as in the proof of \ref{notfree}.  Let $A_n$ be the union of the images of $\alpha_i$ for $i> n$.  Let $Y_n$ be the topological cone of $A_n$ in $X^\omega$, i.e. the subset of $\hat {X}^\omega$ consisting of $\con\times \{0\}$ and the canonically embedded $\hat {A}_n$.  The inclusion map $\iota_n: \con \to Y_i$ defined by $x \mapsto (x,0)$ induces a map $\iota_{n*}$ on fundamental groups  with non-trivial kernel.  Hence, it is enough to show that the induced map on fundamental groups is non-trivial.

\begin{clm*}
For $i\leq n$, $\iota_n(\alpha_i)$ is homotopically essential in $Y_n$.
\end{clm*}

\emph{Proof of claim.}
Suppose that $h:\mathbb D \to Y_n$ is a nullhomotopy of $\iota_n(\alpha_i)$ for some $i\leq n$ where $\mathbb D$ is the unit disk in the plane.  Let $z$ be the cone point.  Notice that $A_n$ separates $Y_n$.  Hence the boundary of each component of $h^{-1}(\hat A_n)$ is contained in $h^{-1}(A_n)$.  By possible modifying $h$, we may assume that each component of $h^{-1}(\hat A_n)$  which is not contained in $h^{-1}(A_n)$ intersects the cone point $z$.  (Suppose $B$ is a component of  $h^{-1}(\hat A_n)$ such that $h(B)\cap \{z\} = \emptyset$.  Then we can push $h$ down along cone lines to insure that $h(B)\subset A_n$.)

Since each component of $h^{-1}(\hat A_n)$ which is not contained in $h^{-1}(A_n)$ intersects $h^{-1}(A_n)$ and $h^{-1}(z)$ (two disjoint closed sets), there are only finitely many components of $h^{-1}(\hat A_n)$ which are not contained in $h^{-1}(A_n)$.

Let $C$ be the component of $h^{-1}(X^\omega)$ containing the unit circle in the plane.  Then $C$ is a planar annulus of finite genus.  $\bigl($The genus is equal to the number of components of $h^{-1}(\hat A_n)$ which are not contained in $h^{-1}(A_n)$.$\bigr)$  Since each boundary component of $C$ except the unit circle maps into $A_n$, the diameter of its image is at most $|\alpha_{n+1}|< \frac{|\alpha_n|}{2}$.  This implies that $h: C \to X^\omega$ can be used to find a finite partition of $\alpha_i$ with mesh at most $\frac{|\alpha_n|}{2}$.  Hence, $\alpha_i$ is partitionable which contradicts our choice of $\alpha_i$.

\end{proof}

The property of being $\epsilon$-coarsely loop divisible is a quasi-isometry invariant in the following sense.

\begin{prop}\ulabel{indep}{Proposition} If $X$ and $Y$ are two quasi-isometric homogenous geodesic metric spaces, then  $X$ is $\epsilon$-coarsely loop divisible if and only if $Y$ is $\epsilon'$-coarsely loop divisible for some $\epsilon'>0$.\end{prop}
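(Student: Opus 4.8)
The plan is to pass to asymptotic cones, where the coarse loop division property becomes the statement that all sufficiently short loops are partitionable, and then to transfer this property across the bi-Lipschitz homeomorphism of cones induced by the quasi-isometry. Let $f\colon X\to Y$ be an $(L,C)$-quasi-isometry. Dividing all distances by $d_n\to\infty$ annihilates the additive constant $C$, so $f$ induces a map $\bar f\colon\con\to\operatorname{Con}^\omega\bigl(Y,f(e),d\bigr)$ satisfying $\tfrac1L\,\d(x,y)\le \d(\bar f(x),\bar f(y))\le L\,\d(x,y)$, and coarse surjectivity of $f$ makes $\bar f$ onto; hence $\bar f$ is an $L$-bi-Lipschitz homeomorphism, as is $\bar f^{-1}$. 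Since the quasi-isometry relation is symmetric, it is enough to produce, from the hypothesis that $X$ is $\epsilon$-coarsely loop divisible, some $\epsilon'>0$ for which $Y$ is $\epsilon'$-coarsely loop divisible.

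First I would record the cone characterization and sharpen it. If $X$ is $\epsilon$-coarsely loop divisible then, as already follows from \ref{pap1} and \ref{limitloop}, every loop in $\con$ of length less than $\epsilon$ is partitionable. The preliminary step is to upgrade this to arbitrarily fine partitions: if $\gamma$ has length $\ell<\epsilon$ and we partition it into pieces of length less than $\ell/2<\epsilon$, then each piece is again a loop of length less than $\epsilon$ and hence again partitionable, so refining inside each $2$-cell exactly as in \ref{iterated} and iterating $l$ times produces a finite $\tfrac{|\gamma|}{2^{l}}$-partition of $\gamma$ for every $l\in\N$. Note that finiteness is automatic because each individual partition is finite; no uniform bound on the number of pieces is needed here.

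The heart of the argument is the transfer across $\bar f$, and this is where I expect the main obstacle. Because $\bar f$ distorts lengths by a factor $L$ in each direction, naively pushing a $\tfrac{|\gamma|}{2}$-partition forward only yields mesh bounded by $L^{2}\tfrac{|\beta|}{2}$ on the image loop $\beta$, far too coarse to witness partitionability; the iterated partitions above are precisely what overcomes this $L^{2}$ gap. Concretely, fix $\epsilon'<\epsilon/L$ and an integer $l$ with $2^{l}>2L^{2}$. Given a loop $\beta$ in $\operatorname{Con}^\omega\bigl(Y,f(e),d\bigr)$ with $|\beta|<\epsilon'$, its preimage $\gamma=\bar f^{-1}\circ\beta$ satisfies $|\gamma|\le L|\beta|<\epsilon$, so $\gamma$ admits a finite $\tfrac{|\gamma|}{2^{l}}$-partition $\Pi$. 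Pushing $\Pi$ forward by $\bar f$ on vertices and replacing each interior geodesic edge by a geodesic of $\operatorname{Con}^\omega\bigl(Y,f(e),d\bigr)$ joining the images of its endpoints (which, being a shortest path, is no longer than the $L$-Lipschitz image of the old edge) produces a partition $\Pi'$ of $\beta$ whose pieces have length at most $L\cdot\tfrac{|\gamma|}{2^{l}}\le\tfrac{L^{2}|\beta|}{2^{l}}<\tfrac{|\beta|}{2}$. Thus $\beta$ is partitionable, so every loop of length less than $\epsilon'$ in the cone of $Y$ is partitionable; since $Y$ is homogeneous, \ref{conpart} then gives that $Y$ is $\epsilon''$-coarsely loop divisible for every $\epsilon''<\epsilon'$, and in particular for some positive parameter. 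The reverse implication follows by applying the same argument to $\bar f^{-1}$, using that $X$ is likewise homogeneous.
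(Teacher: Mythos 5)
Your proof is correct and takes essentially the same route as the paper's: both pass to the bi-Lipschitz homeomorphism of cones induced by the quasi-isometry, characterize the coarse loop division property via partitionability of short loops in the cone (\ref{pap1}, \ref{limitloop}), iterate partitions as in \ref{iterated} to get mesh fine enough to beat the bi-Lipschitz distortion, push the partition forward, and conclude with \ref{conpart}. If anything, your bookkeeping is more careful than the paper's, since your choice of $2^{l}>2L^{2}$ explicitly absorbs the distortion incurred both in pulling back the loop and in pushing forward the partition, where the paper only records a single factor of the bi-Lipschitz constant.
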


\begin{proof}
If $X$ and $Y$ are are quasi-isometric, then their cones are bi-lipschitz.  If $X$ is $\epsilon$-coarsely loop divisible for some $\epsilon>0$, then \ref{pap1} implies that every loop of length less than $\epsilon$ in $\con$ is partitionable.

Let $f: \con \to \operatorname{Con}^\omega \bigl(Y,e',d\bigr)$ be a bi-lipschitz map with bi-lipschitz constant $C$.  By iterating partitions as in \ref{iterated}, we can see that every loop of length less than $\epsilon$ in $\con$ has a partition with finitely many pieces and mesh at most $\frac{|\gamma|}{2C}$.  Let $\gamma$ be a loop in $\operatorname{Con}^\omega\bigl(Y,e',d\bigr)$  with length less than $\frac\epsilon C$. Then $f^{-1}\circ\gamma$ has length at most $\epsilon$ and hence has a partition with mesh at most $\frac{|\gamma|}{2C}$.  Then composing the partition with $f$ gives us a partition of $\gamma$ with finitely many pieces and mesh at most $\frac{|\gamma|}{2}$.
\ref{conpart} implies that $Y$ is $\epsilon'$-coarsely loop divisible for every $\epsilon'< \frac\epsilon C$.

\end{proof}

\subsection{Absolutely non-divisible sequences}\label{subsection absolutely non-divisible}

\begin{defn}
A sequence of loops $(\alpha_n)$ is  \emph{absolutely non-divisible} if there exists an $M$ such that the sequences $P\Bigl(\alpha_n,
\dfrac{|\alpha_n|}{M}\Bigr)$ and $|\alpha_n|$ both tend to $+\infty$ and
$\Bigl\{\dfrac{|\alpha_{n+1}|}{|\alpha_{n}|}\Bigr\}$ is bounded.
\end{defn}

\begin{rmk}\label{absulotely non-divisible}\ulabel{absulotely non-divisible}{Remark}
Suppose that $|\alpha_n|$ is unbounded and $\Bigl\{\dfrac{|\alpha_{n+1}|}{|\alpha_{n}|}\Bigr\}$ is bounded.  To simplify our notation, we will let $|\alpha_n|= a_n$ and $B$ be a bound on $\Bigl\{\dfrac{a_{n+1}}{a_n}\Bigr\}$.

Let $n_0 = 1$.  Then  we can define $\{n_i\}$, inductively, by letting $n_{i+1} = \min\{n\in\mathbb N \ | \ a_n> a_{n_i}+1 \text{ and } n> n_{i}\}$.  If $n_{i+1}\not= n_{i}+1$, then $a_k\leq a_{n_{i}}+1$ for all $n_{i}\leq k<n_{i+1}$.

Thus $\dfrac{a_{n_{i+1}}}{a_{n_i}} = \dfrac{a_{n_{i+1}}}{a_{(n_{i+1})-1}}\cdot\dfrac{a_{(n_{i+1})-1}}{a_{n_i}}\leq B\dfrac{a_{n_{i}}+1}{a_{n_{i}}}\leq B\max\bigl\{2,\frac{2}{a_{n_0}}\bigr\}$.

Therefore $\{a_{n_i}\}_i$ is a subsequence which is absolutely
non-divisible.

Thus, it is possible to loosen this definition slightly and only require that $|\alpha_n|$ be unbounded.
\end{rmk}

\begin{lem}\label{limit}\ulabel{limit}{Lemma}

Fix $\omega$ an ultrafilter on $\mb N$, $d$ an $\omega$-divergent sequence, and $A$ an infinite subset of the natural
numbers. Suppose that $A = \{b_1<b_2<b_3<\cdots\}$ has the property that the set of ratios $\bigl\{\frac{b_{k+1}}{b_k}\bigr\}$ is bounded by $L$. Then for any $\epsilon >0$, there exists a sequence $(a_n)$ in $A$ such that $\lim^\omega \frac{a_n}{d_n} \in \bigl[\frac\epsilon L,\epsilon\bigr]$.
\end{lem}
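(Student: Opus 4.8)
The plan is to choose, for each $n$, the largest element of $A$ that does not exceed $\epsilon d_n$, and then to use the ratio bound on $A$ to show that this element cannot drop below $\epsilon d_n / L$, so that its quotient by $d_n$ is trapped in $[\epsilon/L,\epsilon]$.

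First I would define the sequence explicitly: set $a_n$ to be the largest $b_k\in A$ with $b_k\leq \epsilon d_n$ whenever such an element exists, and $a_n=b_1$ otherwise; this is a well-defined sequence in $A$. Since $d$ is $\omega$-divergent, applying the definition with $M=b_1/\epsilon$ yields $\omega\bigl(\{n\ |\ \epsilon d_n> b_1\}\bigr)=1$, so on an $\omega$-large set $B$ the largest element of $A$ below $\epsilon d_n$ genuinely exists, say $a_n=b_{k(n)}$ with $k(n)\geq 1$.

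Next, on $B$ I would extract the two-sided estimate. By construction $a_n=b_{k(n)}\leq \epsilon d_n$, which gives the upper bound $\frac{a_n}{d_n}\leq \epsilon$. For the lower bound, maximality forces $b_{k(n)+1}>\epsilon d_n$ (the successor $b_{k(n)+1}$ exists because $A$ is infinite), and the hypothesis $\frac{b_{k+1}}{b_k}\leq L$ gives $\epsilon d_n< b_{k(n)+1}\leq L\,b_{k(n)}=L\,a_n$, hence $\frac{a_n}{d_n}>\frac{\epsilon}{L}$. Thus $\frac{a_n}{d_n}\in\bigl(\frac{\epsilon}{L},\epsilon\bigr]$ for every $n\in B$; in particular the sequence $\frac{a_n}{d_n}$ is $\omega$-bounded, so $\lim^\omega \frac{a_n}{d_n}$ exists.

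Finally I would pass to the ultralimit, using that a closed condition holding on an $\omega$-large set is preserved under $\lim^\omega$: if $\lim^\omega \frac{a_n}{d_n}$ exceeded $\epsilon$ (or fell below $\frac{\epsilon}{L}$), then the $\omega$-large set on which $\frac{a_n}{d_n}$ is within a small distance of its ultralimit would be disjoint from $B$, contradicting the fact that two $\omega$-large sets have $\omega$-large, hence nonempty, intersection. This forces $\lim^\omega \frac{a_n}{d_n}\in\bigl[\frac{\epsilon}{L},\epsilon\bigr]$, which is the claim. There is no serious obstacle: the only points requiring care are the well-definedness of $a_n$ (handled by $\omega$-divergence of $d$) and the standard fact that ultralimits respect closed intervals.
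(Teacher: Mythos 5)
Your proposal is correct and follows essentially the same route as the paper: the paper's choice of $i_n$ with $\frac{b_{i_n}}{d_n}\leq \epsilon< \frac{b_{i_n+1}}{d_n}$ is exactly your ``largest element of $A$ not exceeding $\epsilon d_n$,'' the fallback $a_n=b_1$ when $b_1>\epsilon d_n$ is identical, and the ratio bound yields $\frac{\epsilon}{L}<\frac{a_n}{d_n}\leq\epsilon$ in both arguments. Your added remarks on $\omega$-divergence guaranteeing the exceptional set is $\omega$-small and on ultralimits respecting closed intervals merely make explicit what the paper leaves implicit.
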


We allow $a_n$ to have repeated terms; hence, $a_n$ is not necessarily a subsequence of
$b_n$. However $a_n$ is not eventually constant, since $\lim^\omega d_n = +\infty$.

\begin{proof}

Let $L$ be an upper bound on the set $\bigl\{\frac{b_{n+1}}{b_n}\bigr\}$. For all $n$ such that $\frac{b_1}{d_n}\leq \epsilon$, choose $(i_n)$ such that $ \frac{b_{i_n}}{d_n}\leq \epsilon< \frac{b_{i_n+1}}{d_n}$.   Let $a_n = b_{i_n}$.

Then $\epsilon d_n< b_{i_n+1}$ which implies that $\frac\epsilon L<\frac{b_{i_n}}{d_{n}} = \frac{a_n}{d_n}\leq \epsilon$. For all $n$ such that $\frac{b_1}{d_n}> \epsilon$, let $a_n = b_1$.  Then $\lim^\omega \frac{a_n}{d_n} \in \bigl[\frac\epsilon L,\epsilon\bigr]$.
\end{proof}

\begin{lem}\label{ndivisible}\ulabel{ndivisible}{Lemma}
Let $X$ be a complete geodesic metric space.  If there exists a sequence of absolutely non-divisible loops in $X$, then for every pair $\bigl(\omega, d\bigr)$ and $\epsilon>0$, $X$ is not $\epsilon$-coarsely loop divisible.
\end{lem}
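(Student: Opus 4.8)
The plan is to argue by contradiction: fix $\bigl(\omega,d\bigr)$ and $\epsilon>0$, assume $X$ \emph{is} $\epsilon$-coarsely loop divisible, and derive a contradiction from an absolutely non-divisible sequence $(\alpha_n)$ with constant $M$. Writing $a_n=|\alpha_n|$, the aim is to locate, for $\omega$-almost every $n$, one of the loops $\alpha_k$ whose length lands inside the window $[\delta d_n,\epsilon d_n]$ on which coarse divisibility forces a uniform bound on partition pieces, while absolute non-divisibility forces the number of pieces of that loop to blow up. First I would normalize: by Remark~\ref{absulotely non-divisible} I may pass to a subsequence (and relabel) so that the lengths $a_n$ are strictly increasing with $a_{n+1}>a_n+1$, the ratios $a_{n+1}/a_n$ remain bounded, and $P(\alpha_n,a_n/M)\to+\infty$ is retained. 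Then $c_n:=\lceil a_n\rceil$ is a strictly increasing sequence of natural numbers with $c_n\to\infty$ and bounded consecutive ratios, say bounded by $L$; set $A=\{c_1<c_2<\cdots\}\subset\N$, an infinite subset of $\N$.

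Next I would apply Lemma~\ref{limit} to $A$, $\epsilon$, and $L$ to produce a sequence $(t_n)$ with each $t_n\in A$ and $\lim^\omega t_n/d_n\in[\epsilon/L,\epsilon]$; write $t_n=c_{k(n)}$. Fixing $\delta=\epsilon/(2L)\in(0,\epsilon/L)\subset(0,\epsilon)$, the termwise inequalities established inside Lemma~\ref{limit} give $t_n\in[\delta d_n,\epsilon d_n]$ for $\omega$-almost every $n$. Moreover $t_n>(\epsilon/L)d_n$ with $d_n$ $\omega$-divergent forces $t_n\to\infty$ $\omega$-almost surely, and since $c$ is strictly increasing, $c_{k(n)}\geq c_{K_0}\Rightarrow k(n)\geq K_0$, so $k(n)\to\infty$ $\omega$-almost surely as well.

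Then I would invoke the coarse-divisibility hypothesis through its dyadic iterate. Choose $l$ with $2^l\geq M$. By Lemma~\ref{lemmaiterated} there is an $\omega$-large set $A'$ on which $\vartheta^l$ is bounded by some constant $K$ over $\bigcup_{m\in A'}[\delta d_m,\epsilon d_m]$. For $\omega$-almost every $n$ (those $n\in A'$ with $t_n\in[\delta d_n,\epsilon d_n]$) we therefore get $\vartheta^l(t_n)\leq K$. On the other hand, $t_n=\lceil a_{k(n)}\rceil$ places $\alpha_{k(n)}$ in the length-bucket $t_n-1<|\alpha_{k(n)}|\leq t_n$ defining $\vartheta^l(t_n)$, so
$$\vartheta^l(t_n)\geq P\bigl(\alpha_{k(n)},a_{k(n)}/2^l\bigr)\geq P\bigl(\alpha_{k(n)},a_{k(n)}/M\bigr),$$
where the last inequality holds because $P$ is non-increasing in its mesh argument and $a_{k(n)}/2^l\leq a_{k(n)}/M$. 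Since $k(n)\to\infty$ $\omega$-almost surely and $P(\alpha_k,a_k/M)\to+\infty$, the right-hand side exceeds $K$ $\omega$-almost surely, contradicting $\vartheta^l(t_n)\leq K$ on an $\omega$-large set. This contradiction shows $X$ is not $\epsilon$-coarsely loop divisible.

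The main obstacle is the bookkeeping in the last two steps: matching the real loop-lengths $a_k$ to the integer buckets of $\vartheta^l$ via $c_n=\lceil a_n\rceil$, and transferring the \emph{ordinary} limit $P(\alpha_k,a_k/M)\to+\infty$ into an $\omega$-almost-sure statement along the reindexed sequence $k(n)$. The decisive point is that $t_n\to\infty$ $\omega$-almost surely propagates to $k(n)\to\infty$, which relies on the strict monotonicity of $c_n$ secured during normalization. The reason for passing to $\vartheta^l$ (hence Lemma~\ref{lemmaiterated}) rather than working with $\vartheta=\vartheta^1$ directly is precisely to let the constant $M$ from the definition of absolute non-divisibility interact with the dyadic mesh refinement $|\alpha|/2^l$ built into the functions $\vartheta^i$.
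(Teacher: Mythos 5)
Your proof is correct and takes essentially the same route as the paper's: normalize the absolutely non-divisible sequence via \ref{absulotely non-divisible}, use \ref{limit} to place the loop lengths in a window $[\delta d_n,\epsilon d_n]$ $\omega$-almost surely, and contradict the uniform bound on $\vartheta^l$ (with $2^l\geq M$) supplied by \ref{lemmaiterated} against the divergence of $P\bigl(\alpha_k, |\alpha_k|/M\bigr)$. Your additional bookkeeping with $c_n=\lceil a_n\rceil$ and the monotone reindexing $k(n)$ merely makes explicit the matching of real-valued lengths to the integer buckets of $\vartheta^l$ and the transfer of the ordinary limit to an $\omega$-almost-sure statement, points the paper's proof passes over silently.
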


\begin{proof}
Fix $\epsilon>0$, $\omega$ an ultrafilter, and $d$ an $\omega$-divergent sequence of real numbers.

Let $(\gamma_n)$ be a sequence of loops in $X$ which is absolutely non-divisible.
By passing to a subsequence as in \ref{absulotely non-divisible}, we may assume that the lengths of $\gamma_n$ are nondecreasing.  Let $A =\{|\gamma_n|\}$ and $L$ be an upper bound on $\bigl\{\frac{|\gamma_{n+1}|}{|\gamma_n|}\bigr\}$.

Let $(a_n)\subset A$ be a sequence constructed as in \ref{limit} where we replace $\epsilon $ by $\frac\epsilon2$. Consider
the sequence of loops $\gamma_{k_n}$ where $\gamma_{k_n}$ has length $a_n$.  Since $\lim^\omega \frac{a_n}{d_n} \in \bigl[\frac{\epsilon} {2L},\frac\epsilon2 \bigr]$, we have $|\gamma_{k_n}| \in\bigl[\frac{d_n\epsilon}{L},d_n\epsilon\bigr]$ $\omega$-almost surely.   However,
$P\bigl(\gamma_{k_n}, \frac{|\gamma_{k_n}|}{M}\bigr)$ tends to $+\infty$. Hence, $\vartheta$ restricted to $ \bigcup\limits_{n\in A}[\frac\epsilon {L} d_n, \epsilon d_n]$ is unbounded for all $\omega$-large $A$.  Hence \ref{lemmaiterated} implies that $X$ is not $\epsilon$-coarsely loop divisible.  Since $\epsilon$ was arbitrary, $X$ is not $\epsilon$-coarsely loop divisible with respect to $\bigl(\omega, d\bigr)$ for any $\epsilon>0$.  Since $\bigr(\omega, d\bigl)$ were also arbitrary, this completes the proof.
\end{proof}

\ref{ndivisible} and \ref{uncountable} immediately imply the following corollary.

\begin{cor}\label{absuncountable}\ulabel{absuncountable}{Corollary}
Let $X$ be a complete homogenous geodesic metric space.  If there exists a sequence
of loops in $X$ which is absolutely non-divisible, then every asymptotic cone of $X$ has uncountable fundamental group and is not semi-locally simply connected at any point.
\end{cor}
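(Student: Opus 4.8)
The plan is to derive Corollary~\ref{absuncountable} directly from the two results that precede it, namely Lemma~\ref{ndivisible} and Theorem~\ref{uncountable}, treating the corollary as essentially a bookkeeping statement that combines them and then adds the slightly stronger topological conclusion about semi-local simple connectivity. First I would observe that the hypothesis gives a sequence of loops in $X$ which is absolutely non-divisible, so Lemma~\ref{ndivisible} applies verbatim: for every pair $\bigl(\omega,d\bigr)$ and every $\epsilon>0$, the space $X$ fails to be $\epsilon$-coarsely loop divisible. This is exactly the hypothesis required to invoke Theorem~\ref{uncountable}, so for each fixed pair $\bigl(\omega,d\bigr)$ the cone $\con$ has uncountable fundamental group. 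Since an asymptotic cone of $X$ is by definition $\operatorname{Con}^\omega\bigl(X,e,d\bigr)$ for some choice of $\omega$, $e$, and $d$, and since $X$ is homogeneous so the basepoint $e$ is immaterial, this already gives the uncountability statement for every asymptotic cone.

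The remaining content is the assertion that every asymptotic cone is \emph{not} semi-locally simply connected at any point. Here I would argue by contradiction using the contrapositive of Proposition~\ref{slsc}. Suppose some asymptotic cone $\con$ were semi-locally simply connected at a point; by homogeneity it would then be semi-locally simply connected at every point, so $\con$ is semi-locally simply connected as a space. Proposition~\ref{slsc} then yields that $X$ is $\epsilon$-coarsely loop divisible for some $\epsilon>0$ with respect to this very pair $\bigl(\omega,d\bigr)$. But Lemma~\ref{ndivisible} has already established that $X$ is \emph{not} $\epsilon$-coarsely loop divisible with respect to $\bigl(\omega,d\bigr)$ for \emph{any} $\epsilon>0$, a direct contradiction. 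Hence no asymptotic cone can be semi-locally simply connected at any point.

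I would then stitch these two halves together into a single short paragraph: the absolutely non-divisible sequence forces failure of $\epsilon$-coarse loop divisibility for every $\bigl(\omega,d\bigr)$ and every $\epsilon$ (Lemma~\ref{ndivisible}), from which Theorem~\ref{uncountable} gives uncountable $\pi_1$ and Proposition~\ref{slsc} (contrapositive) rules out semi-local simple connectivity, for each asymptotic cone in turn. The one point demanding a little care is the quantifier structure: the corollary asserts a conclusion about \emph{every} asymptotic cone, so I must make sure Lemma~\ref{ndivisible} is applied with the $\bigl(\omega,d\bigr)$ ranging over all pairs that define asymptotic cones of $X$ — fortunately the lemma is stated with exactly that universal quantifier, so no strengthening is needed. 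The only genuinely substantive input beyond quoting the lemmas is the use of homogeneity to pass from ``semi-locally simply connected at some point'' to the space-level hypothesis of Proposition~\ref{slsc}, and this is routine since isometries act transitively. I expect no real obstacle; the main risk is merely failing to record that the non-divisibility holds uniformly over all pairs $\bigl(\omega,d\bigr)$, which is what makes the conclusion apply to every asymptotic cone rather than to a single one.
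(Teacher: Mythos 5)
Your proposal is correct and follows essentially the same route as the paper, which simply states that the corollary is an immediate consequence of \ref{ndivisible} and \ref{uncountable}. Your explicit use of the contrapositive of \ref{slsc}, together with homogeneity of the cone to pass from failure of semi-local simple connectivity at one point to all points, correctly fills in the step the paper leaves implicit for the ``not semi-locally simply connected at any point'' clause.
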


\subsection{Simply connected cones}\label{section simply connected cones}
When Papasoglu proved \ref{pap2}, he used the uniform bound on the number of pieces in a partition to construct discs.  Being coarsely loop divisible implies that loops in the cone are partitionable but does not give a bound on the number of pieces which is independent of the loop.  Thus Papasoglu's method is insufficient to build discs when a space is only coarsely loop divisible and not uniformly coarsely loop divisible.  Here we will show that requiring a linear isodiametric function on partitions along with coarsely loop divisible is sufficient to build discs.

When considering subsets of $\mathbb N$, we will write $[a,b]$ for the set $\bigl\{n\in\mathbb Z \ | \ a\leq n\leq b\bigr\}$.  For $A\subset \mathbb N$, we will let $A^c =\mathbb N\backslash A$.  For $d\in \mathbb R^+$ and $A\subset \mathbb N$, let $\mathcal M_d(A)= \bigl\{x\in N \ |  \ [\frac{x}{d}, xd]\cap A \not=\emptyset\bigr\}$.

\begin{prop}\label{gldp for all}\ulabel{gldp for all}{Proposition}
Suppose that for every $\bigl(\mu, (p_n)\bigr)$ there exists an $\epsilon>0$ such that $X$ is $\epsilon$-coarsely loop divisible with respect to the pair $\bigl(\mu, (p_n)\bigr)$.  Then there exists a pair $\bigl(\omega, d\bigr)$ such that $X$ is $\epsilon$-coarsely loop divisible for every $\epsilon>0$ with respect to $\bigl(\omega, d\bigr)$.
\end{prop}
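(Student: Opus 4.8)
The plan is to reduce the statement to a combinatorial property of the divisibility function $\vartheta$ and then extract that property from the hypothesis by manufacturing a bad pair. Put $B_M=\{m\in\N:\vartheta(m)>M\}$ and, for $k\in\N$, set $G(k,M)=\{d\in\N:\vartheta\le M\text{ on }[d/k,kd]\}=\N\setminus\mathcal M_k(B_M)$, the scales whose multiplicative $k$-window is $M$-good. Since a larger window has a larger supremum, $X$ being $\epsilon$-coarsely loop divisible with respect to $(\omega,d)$ for every $\epsilon>0$ amounts to: for every $k$ the function $g_k(n)=\sup_{m\in[d_n/k,kd_n]}\vartheta(m)$ is $\omega$-bounded. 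It therefore suffices to produce a nondecreasing threshold sequence $(M_k)$ for which every finite intersection $I_L=\bigcap_{k\le L}G(k,M_k)$ is infinite: one then enumerates $d_n\in\bigcap_{k\le h(n)}G(k,M_k)$ with $h(n)\to\infty$, takes any nonprincipal $\omega$, and notes that $g_k(n)\le M_k$ as soon as $h(n)\ge k$, so each $g_k$ is $\omega$-bounded and the pair works for all $\epsilon$.

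I would produce the thresholds by induction on $L$, maintaining the stronger invariant that $I_L$ is multiplicatively syndetic, i.e.\ that there is $R_L$ with $[x,R_Lx]\cap I_L\neq\emptyset$ for all large $x$; start from $I_0=\N$. For the step, suppose $I_{L-1}$ is $R$-syndetic and that no threshold $M_L\ge M_{L-1}$ makes $I_L=I_{L-1}\cap G(L,M_L)$ syndetic. Then for every $j$ the set $I_{L-1}\cap G(L,j)$ has arbitrarily large multiplicative gaps, so I may choose windows $[x_j,S_jx_j]$ with $x_j\to\infty$ and $S_j\to\infty$ inside which every element of $I_{L-1}$ fails $G(L,j)$. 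Because $I_{L-1}$ is $R$-syndetic and each failing $d$ contributes a point of $B_j$ to $[d/L,Ld]$, the bad set $B_j$ is $\rho$-syndetic throughout this window for the fixed ratio $\rho=RL^2$, and satisfies $\vartheta>j$ there.

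From this I build a single pair contradicting the hypothesis. Let $p_j$ be the multiplicative midpoint of $[x_j,S_jx_j]$ and let $\mu$ be any nonprincipal ultrafilter, so $p_j\to\infty$. Fix $\epsilon>0$ and put $\delta=\epsilon/\rho$. For large $j$ the interval $[\delta p_j,\epsilon p_j]$ has multiplicative ratio $\rho$ and lies inside $[x_j,S_jx_j]$ (as $S_j\to\infty$), so by $\rho$-syndeticity it contains a length with $\vartheta>j$; hence $\sup_{m\in[\delta p_j,\epsilon p_j]}\vartheta(m)\to\infty$. Thus for every $\mu$-large $A$ the function $\vartheta$ is unbounded on $\bigcup_{j\in A}[\delta p_j,\epsilon p_j]$, so $X$ is not $\epsilon$-coarsely loop divisible with respect to $(\mu,(p_j))$; as $\epsilon$ was arbitrary, $(\mu,(p_j))$ admits no working $\epsilon$, contradicting the hypothesis. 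This is the same mechanism as Lemmas \ref{ndivisible} and \ref{limit}: a multiplicatively bounded family of bad lengths forces non-divisibility at every proportion. Hence the step succeeds, the induction yields $(M_k)$, and paragraph one produces the desired $(\omega,d)$.

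The main obstacle — and the reason the full hypothesis (every pair has some $\epsilon$), rather than its weaker consequence "no absolutely non-divisible sequence", is used — is exactly this inductive maintenance of syndeticity: the sets $G(k,M)$ cannot be made nested, since small windows want small thresholds while large windows want large ones, so finite intersections can in principle be empty, and ruling this out requires converting each potential failure into a genuine bad pair. The delicate points are guaranteeing that the lengths produced at a failing step simultaneously have $\vartheta\to\infty$, achieved by letting the level $j$ grow across longer and longer windows, and remain multiplicatively syndetic with a ratio $\rho=RL^2$ independent of $j$, so that one fixed proportion $\delta/\epsilon$ captures them for every $\epsilon$.
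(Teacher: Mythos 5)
Your proof is correct, and it runs on the same engine as the paper's while packaging the scale-combination genuinely differently. The shared mechanism: both arguments reduce the proposition to the behaviour of $\vartheta$ on multiplicative windows of $\N$, and both convert a combinatorial failure into a contradicting pair by placing the scaling sequence at the geometric midpoints of bad windows and letting the badness level grow with the window --- your pair $\bigl(\mu,(p_j)\bigr)$ with $p_j=x_j\sqrt{S_j}$ and a point of $B_j$ caught in $[\delta p_j,\epsilon p_j]$ at the fixed proportion $\delta=\epsilon/\rho$ is, almost verbatim, how the paper proves parts (i) and (ii) of Lemma \ref{allways gldp}, where the midpoint $d_n=(x_ny_n)^{\frac12}$ plays the same role. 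Where you diverge is in assembling the levels. The paper isolates two separate necessary conditions --- the good-interval ratios $c_k$ diverge, and for each $s$ the sets $\mathcal M_s(B_k)$ contain no multiplicative intervals of ratio $\geq b(s)$ for large $k$ (which is exactly syndeticity of a single $G(s,k)$ in your notation) --- and then combines levels through an explicit nested-window claim driven by the parameter inequalities $c_{k_n'}>b_{n-1}^3\cdot c_{k_{n-1}'}^2$, $s_n=c_{k_n'}/3$: each level-$n$ good window is made long enough that level-$(n-1)$ syndeticity places a good point, with margin, well inside it. You instead strengthen the induction hypothesis to ``$I_L=\bigcap_{k\leq L}G(k,M_k)$ is multiplicatively syndetic'' and defend that invariant by re-running the bad-pair construction at each level; this correctly confronts the real difficulty you identify (syndetic sets can have empty intersection, so per-level syndeticity alone does not suffice --- the paper's claim inside the proof of \ref{gldp for all} is its answer to the same problem). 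Your bookkeeping buys two simplifications: parametrizing by growing windows $G(k,M)$ rather than by growing thresholds over a fixed window makes the divergence of good ratios automatic, so you need no analogue of condition (i) of Lemma \ref{allways gldp}; and the syndeticity invariant replaces the paper's nesting claim and its parameter gymnastics, at the cost of a fresh contradiction argument at every level where the paper invokes its lemma once. In a written version you should make explicit the routine margins: the ratio-$L$ buffer needed inside $[x_j,S_jx_j]$ and the ``for all large $x$'' regime of $R$-syndeticity (both absorbed since $x_j,S_j\to\infty$), the harmless normalization $\rho>1$, and in your first paragraph ``infinite'' should read ``unbounded'' (which syndeticity supplies) so that $(d_n)$ can be chosen $\omega$-divergent.
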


Before we can prove \ref{gldp for all}, we will need a necessary condition for $X$ to be $\epsilon$-coarsely loop divisible for every pair $\bigl(\omega, d\bigr)$.

\begin{lem}\label{allways gldp}
 Let $A_k = \vartheta ^{-1}\bigl([1,k]\bigr)$, $A_k' =\vartheta ^{-1}\bigl(\{k\}\bigr)$, and $B_k = \vartheta ^{-1}\bigl([k+1,\infty)\bigr)$. If  $X$ is $\epsilon$-coarsely loop divisible for every pair $\bigl(\omega, d\bigr)$, then for every $s\in\mathbb N$ there exists $b=b(s)$ such that

\begin{enumerate}[i)]
    \item\label{growing} if $c_k^i = \sup\Bigl\{ \frac yx | \ i<x \text{ and } [x,y] \subset A_k\Bigr\}$, then $c_k =\lim\limits_{i\to\infty} c_k^i$ and $c_k \to \infty$,

    \item\label{boundedcomplement} if $b_k = \sup\Bigl\{ \frac yx | [x,y] \subset \mathcal M_s(B_k)\Bigr\}$, then   $b_k<b$ for all sufficiently large $k$, and

     \item if $c_k' = \Bigl\{ \frac yx | [x,y] \subset \mathcal M_s(A'_k)\Bigr\} $, then $c_k'<\alpha$ for all $k$.

\end{enumerate}
\end{lem}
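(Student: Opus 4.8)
The plan is to read every clause as a statement about multiplicative intervals sitting inside the level sets of the divisibility function $\vartheta$, and to extract the failures of the clauses as the construction of a single ``bad'' pair $\bigl(\omega,d\bigr)$ contradicting the hypothesis (which I read, following \ref{gldp for all}, as: for \emph{every} pair there is \emph{some} $\epsilon>0$ making $X$ $\epsilon$-coarsely loop divisible). First I would record the elementary bookkeeping. Since $A_k\subseteq A_{k+1}$ and $B_{k+1}\subseteq B_k$, each $c_k^i$ is non-increasing in $i$ (so $c_k=\lim_i c_k^i$ exists), $c_k$ is non-decreasing and $b_k$ non-increasing in $k$; moreover $A_k\cup B_k=\mathbb N$, $A_k'\subseteq B_{k-1}$, and $x\notin\mathcal M_s(B_k)$ precisely when $[x/s,xs]\subseteq A_k$. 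I would also isolate one geometric observation used throughout: if an integer interval $[x,y]$ with $y\geq s^2x$ is contained in $\mathcal M_s(B)$ for some set $B$, then $[x,y]\cap B\neq\emptyset$, because a point $z\approx\sqrt{xy}$ lies in $[x,y]\subseteq\mathcal M_s(B)$ and the window $[z/s,zs]$ it witnesses is contained in $[x,y]$. Finally I would note the translation of the definition: $\vartheta$ is unbounded on a set $S$ exactly when $S\cap B_K\neq\emptyset$ for every $K$, and $X$ fails to be $\epsilon$-coarsely loop divisible with respect to $\bigl(\omega,d\bigr)$ exactly when some $\delta\in(0,\epsilon)$ makes $\vartheta$ unbounded on $\bigcup_{n\in A}[\delta d_n,\epsilon d_n]$ for every $\omega$-large $A$.

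For clause (i) I would argue directly. Apply the hypothesis to a convenient pair, say $d_n=2^n$ and a nonprincipal $\omega$, to obtain a witnessing $\epsilon>0$. For each $j$, taking $\delta=\epsilon/2^j$ produces a constant $K_j=K(\delta,\epsilon)$ and an $\omega$-large set on which $[\delta d_n,\epsilon d_n]\subseteq A_{K_j}$; since $d$ is $\omega$-divergent these intervals occur with left endpoint beyond any prescribed $i$ and with endpoint ratio tending to $2^j$. Hence $c_{K_j}\geq 2^{j-1}$, so $\sup_k c_k=\infty$, and monotonicity of $c_k$ in $k$ yields $c_k\to\infty$.

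The heart of the argument is clause (ii), which I would prove by contraposition, and I expect this to be the main obstacle. Suppose (ii) fails; since $b_k$ is non-increasing this forces $b_k=\infty$ for \emph{every} $k$, so for each $k$ one may choose an interval $I_k\subseteq\mathcal M_s(B_k)$ of endpoint ratio exceeding $k$ with right endpoint as large as desired. Taking $d_n$ to be the multiplicative center of $I_n$ gives an $\omega$-divergent scaling sequence with $I_n=[d_n/R_n,d_nR_n]$ and $R_n\to\infty$. Now fix any $\epsilon>0$ and set $\delta=\epsilon/s^2$. For all large $n$ the interval $[\delta d_n,\epsilon d_n]$ lies inside $I_n\subseteq\mathcal M_s(B_n)$ and has ratio $s^2$, so by the midpoint observation it meets $B_n$, hence meets every $B_K$ with $K\leq n$. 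Therefore, for any nonprincipal $\omega$ and any $\omega$-large $A$, the function $\vartheta$ is unbounded on $\bigcup_{n\in A}[\delta d_n,\epsilon d_n]$; as $\epsilon$ was arbitrary, $X$ is not $\epsilon$-coarsely loop divisible with respect to $\bigl(\omega,d\bigr)$ for any $\epsilon$, contradicting the hypothesis. The delicate points I anticipate are the simultaneous bookkeeping needed to make one sequence serve all $\epsilon$ at once and the verification that the witnessed bad lengths land inside the prescribed windows (this is exactly what fixing the ratio at $s^2$ buys).

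For clause (iii) I would exploit $A_k'\subseteq B_{k-1}$, which gives $\mathcal M_s(A_k')\subseteq\mathcal M_s(B_{k-1})$ and hence $c_k'\leq b_{k-1}$. By (ii) there is $k_\ast$ with $b_{k-1}<b$ whenever $k>k_\ast$, so $c_k'<b$ for all large $k$, and a uniform $\alpha$ is then obtained by enlarging $b$ to dominate the finitely many remaining values. The one point that still needs attention is the finiteness of $c_k'$ for each of these small $k$: it does not follow from (ii), since the low-index sets $B_{k-1}$ may well have $b_{k-1}=\infty$. I regard this as the genuinely subtle part of (iii), and would dispatch it by a separate analysis of the individual level sets $\vartheta^{-1}(\{k\})$ rather than by the density argument above, taking $\alpha=\alpha(s)$ large enough to absorb the finitely many exceptional levels.
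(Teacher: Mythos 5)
Your treatment of (i) and (ii) is correct and, for the crucial clause (ii), is essentially the paper's own argument: the paper likewise negates (ii) to extract intervals $[x_n,y_n]\subset\mathcal M_s(B_n)$ with $\frac{y_n}{x_n}>n$, scales by the geometric centers $d_n=(x_ny_n)^{1/2}$, and uses exactly your geometric-mean window observation (in the form $\mathcal M_{\sqrt n}(d_n)\subset[x_n,y_n]$) to force $[\delta d_n,\epsilon d_n]\cap B_m\neq\emptyset$ for all large $m$, contradicting divisibility of that single pair for every $\epsilon$. For (i) you argue directly from one fixed pair $\bigl(\omega,(2^n)\bigr)$, extracting intervals of ratio $2^j$ inside $A_{K_j}$ and invoking monotonicity of $c_k$, whereas the paper argues by contradiction (assume $c_k<L$ for all $k$, choose $k_n$ with $c_n^i<2L$ for $i>k_n$, set $d_n=k_n^2$); the mechanism is identical and both versions are sound. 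One small point in your favor: your threshold $\delta=\epsilon/s^2$ is the correct one for the window $[z/s,zs]$ about $z\approx\sqrt{xy}$ to land inside $[x,y]$, while the paper's stated condition $\delta<\frac{\epsilon}{2s}$ only suffices for $s\le 2$.

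On (iii) you diverge from the paper, and instructively so. The paper disposes of (iii) with the single sentence that its proof is the same as that of (ii), which in effect only handles witnesses $[x_n,y_n]\subset\mathcal M_s(A'_{k_n})$ with $k_n\to\infty$ (where $A'_{k_n}\subseteq B_{k_n-1}$ lets the (ii) machine run); your reduction $c'_k\le b_{k-1}$ via $\mathcal M_s(A'_k)\subseteq\mathcal M_s(B_{k-1})$ makes this precise and is cleaner. But the residual sub-step you defer --- finiteness of $c'_k$ for the finitely many small $k$ --- cannot be dispatched by any ``separate analysis of the level sets'': it is false under the lemma's hypotheses. If $\vartheta$ is eventually constant equal to some $k_0$ (as happens for any space whose divisibility function is bounded, and such a space is trivially $\epsilon$-coarsely loop divisible for every pair), then $\mathcal M_s(A'_{k_0})$ contains arbitrarily long multiplicative intervals and $c'_{k_0}=\infty$; the hypothesis constrains only the high level sets $B_k$ and says nothing about where small values of $\vartheta$ sit. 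So clause (iii) as printed (note its constant $\alpha$ is never defined and the supremum is missing) can only be read as ``$c'_k<\alpha$ for all sufficiently large $k$,'' and under that reading your derivation from (ii) is already a complete proof --- indeed more than the paper's own one-line proof supplies. Keep (i) and (ii) as written, and simply delete the promised small-$k$ analysis rather than attempt it.
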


\begin{proof}For fixed $k$, $c_k^i$ is a decreasing sequence in $i$.  Hence, $c_k$ exists as an extended real number ($c_k^i$ might be infinite for all $i$). The sequence $c_k$ is increasing since the sets $A_k$ are nested.

\emph{Proof of (\ref{growing}).} Suppose that there existed $L$ such that $c_k< L$ for all $k$.  We may choose an increasing sequence $k_n$ such that $c_n^i< 2L$ for all $i>{k_n}$.  Thus for every interval $[x,y]$ such that $k_n<x$ and $\frac xy \geq 2L$, $[x,y]\not\subset A_n$, i.e. $[x,y]\cap B_n \not=\emptyset$.

Fix an ultrafilter $\omega$ and let $d_n = (k_n)^2$. Suppose $X$ is $\epsilon$-coarsely loop divisible for some $\epsilon>0$.  Then $\bigcup\limits_{n\in A}  [\frac{\epsilon d_n}{2L}, \epsilon d_n]\subset A_t$ for some $t$ and $\omega$-large $A$.  However; for all sufficiently large $n$, $k_n<\frac{\epsilon d_n}{2L}$ which implies that $ [\frac{\epsilon d_n}{2L}, \epsilon d_n]\cap B_n\neq\emptyset$ for all sufficiently large $n$.  This contradictions our choice of $t$ such that $\bigcup\limits_{n\in A}  [\frac{\epsilon d_n}{2L}, \epsilon d_n]\subset A_t$.

\emph{Proof of (\ref{boundedcomplement}).} Fix $s\in\mathbb N$.  Suppose that $(\ref{boundedcomplement})$ does not hold.  Then there exists $[x_n, y_n]\subset \mathcal M_s(B_n)$ such that $\frac{y_n}{x_n}>n$.  Fix an ultrafilter $\omega$ and let $d_n = (x_ny_n)^\frac12$, the geometric center of the interval $[x_n, y_n]$. Then for every $n'\leq n$, $\mathcal M_{\sqrt n}(d_n) \subset [x_n, y_n]\subset \mathcal M_s(B_{n'})$.  (The first inclusion follows by our choice of $d_n$ and the second holds since $\mathcal M_s(B_{n}) \subset \mathcal M_s(B_{n'}) $ for $n'\leq n$.)

Suppose $X$ is $\epsilon$-coarsely loop divisible with respect to $\bigl(\omega, d\bigr)$ for some $\epsilon\in(0,1)$.  For any $0<\delta<\epsilon$, $\bigcup\limits_{n>m}  [\delta d_n, \epsilon d_n] \subset \mathcal M_s(B_m)$ for every $m>\frac1{\delta^2}$.  If $\delta<\frac{\epsilon}{2s}$ and $ [\delta d_n, \epsilon d_n] \subset \mathcal M_s(B_m)$, then $ [\delta d_n, \epsilon d_n] \cap B_m\not=\emptyset$.  Since this hold for every sufficiently large $m$, we can derive a contradiction as in (\ref{growing}).

The proof of (\emph{iii}) is the same as proof of (\emph{\ref{boundedcomplement}}).

\end{proof}

\begin{proof}[Proof of \ref{gldp for all}]We will use the notation from Lemma \ref{allways gldp}.  The lemma is trivial if some $c_k = \infty$.  Thus we will assume that for every $k$, $c_k<\infty$.

Let $k_1' = 1$ and $s_1 = \frac{c_{k_1'}}{3}$.  We may choose $k_1>k_1'$ and $b_{1}$ such that $\sup\Bigl\{ \frac ba \bigl| [a,b] \subset \mathcal M_{s_1}(B_k)\Bigr\}< b_1$ for all $k\geq k_1$.

Suppose that we have inductively define $s_i$, $k_i'$, $k_i$ and $b_{i}$ for all $i<n$.

Choose $k_n'\in \mathbb N$ such that $c_{k_n'}> b_{n-1}^3\cdot c_{k_{n-1}'}^2$ and let $s_n = \frac{c_{k_n'}}{3}$. Again, we may choose $k_n>k_n'$ and $b_{n}$ such that $\sup\Bigl\{ \frac ba | [a,b] \subset \mathcal M_{s_{n}}(B_k)\Bigr\}< b_{n}$ for all $k\geq k_n$.

Choose $[a_{1,1},b_{1,1}]$ a maximal interval in $A_{k_1}$ containing a point of $\bigr(\mathcal M_{s_1}(B_{k_1})\bigl)^c$.  Suppose that for all $i<n$, we have chosen $[a_{i,i},b_{i,i}]$.

Let $[a_{n,n},b_{n,n}]$ be a maximal interval in $A_{k_n}$ containing a point of $\bigr(\mathcal M_{s_n}(B_{k_n})\bigl)^c$ such that $b_{n-1,n-1} < a_{n,n}$.

\begin{clm*}
Let $x\in \bigr(\mathcal M_{s_i}(B_{k_i})\bigl)^c$.  Then $[\frac{x}{s_i},xs_i]\subset A_{k_i}$ and there exist $x'\in \bigr(\mathcal M_{s_{i-1}}(B_{k_{i-1}})\bigl)^c\cap [\frac{x}{s_i},xs_i]$ such that $\mathcal M_{s_{i-1}}\bigl([\frac{x'}{s_{i-1}}, x's_{i-1}]\bigr)\subset [\frac{x}{s_i},xs_i]$.
\end{clm*}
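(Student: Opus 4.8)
The plan is to verify the two assertions separately; the first is immediate from the definitions, and the second is where the growth built into the parameters $s_i$, $k_i$, $b_i$ does the work. Throughout I use that $c_k\to\infty$ and the $c_{k_j'}$ were chosen increasing, so $s_j=\frac{c_{k_j'}}{3}\to\infty$ and in particular $s_j\ge 1$ in the relevant range of indices.

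For the first inclusion I would simply unwind definitions. The hypothesis $x\in\bigl(\mathcal{M}_{s_i}(B_{k_i})\bigr)^c$ says exactly that $[\frac{x}{s_i},xs_i]\cap B_{k_i}=\emptyset$. Since every loop satisfies $\vartheta\ge 1$, the sets $A_{k_i}=\vartheta^{-1}([1,k_i])$ and $B_{k_i}=\vartheta^{-1}([k_i+1,\infty))$ partition $\mathbb N$, so emptiness of the intersection with $B_{k_i}$ forces $[\frac{x}{s_i},xs_i]\subset A_{k_i}$.

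For the second assertion the key computation is that multiplicative thickening of a multiplicative interval is again a multiplicative interval: for any $y$ the interval $[\frac{z}{s_{i-1}},zs_{i-1}]$ meets $[\frac{y}{s_{i-1}},ys_{i-1}]$ precisely when $\frac{y}{s_{i-1}^2}\le z\le ys_{i-1}^2$, so $\mathcal{M}_{s_{i-1}}\bigl([\frac{y}{s_{i-1}},ys_{i-1}]\bigr)=[\frac{y}{s_{i-1}^2},ys_{i-1}^2]$. Hence the required containment $\mathcal{M}_{s_{i-1}}\bigl([\frac{x'}{s_{i-1}},x's_{i-1}]\bigr)\subset[\frac{x}{s_i},xs_i]$ is equivalent to $x'$ lying in $J:=\bigl[\,x\frac{s_{i-1}^2}{s_i},\,x\frac{s_i}{s_{i-1}^2}\,\bigr]$; and since $s_{i-1}^2\ge1$ we have $J\subset[\frac{x}{s_i},xs_i]$, so any $x'\in J$ automatically meets the membership requirement $x'\in[\frac{x}{s_i},xs_i]$. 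It therefore remains only to find a point of $J$ lying in $\bigl(\mathcal{M}_{s_{i-1}}(B_{k_{i-1}})\bigr)^c$. To do so I would compare multiplicative lengths: the ratio of the endpoints of $J$ is $\frac{s_i^2}{s_{i-1}^4}$, and the inductive choice $c_{k_i'}>b_{i-1}^3c_{k_{i-1}'}^2$ together with $s_j=\frac{c_{k_j'}}{3}$ gives $s_i>3b_{i-1}^3s_{i-1}^2$, whence $\frac{s_i^2}{s_{i-1}^4}>9b_{i-1}^6>b_{i-1}$ (we may assume $b_{i-1}\ge1$). On the other hand, by part (ii) of Lemma \ref{allways gldp}, in the form guaranteed by the choice of $k_{i-1}$ and $b_{i-1}$, every integer interval $[a,b]\subset\mathcal{M}_{s_{i-1}}(B_{k_{i-1}})$ satisfies $\frac{b}{a}<b_{i-1}$. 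Since the ratio of $J$ exceeds $b_{i-1}$, the interval $J$ cannot be contained in $\mathcal{M}_{s_{i-1}}(B_{k_{i-1}})$, so any integer $x'\in J\setminus\mathcal{M}_{s_{i-1}}(B_{k_{i-1}})$ is the point we want.

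The step I expect to be the only real nuisance is the passage between the real interval $J$ and the integer intervals controlled by $b_{i-1}$: the bound $b_{i-1}$ is phrased for intervals with integer endpoints, whereas the length estimate is computed for $J$ itself. I would handle this by observing that the relevant integers are large and that the enormous slack between $9b_{i-1}^6$ and $b_{i-1}$ absorbs the loss incurred by replacing the endpoints of $J$ with the nearest integers $\lceil x s_{i-1}^2/s_i\rceil$ and $\lfloor x s_i/s_{i-1}^2\rfloor$; the same inequality $s_i\ge s_{i-1}^2$ also shows $J$ is nonempty. Everything else is a direct reading of the definitions.
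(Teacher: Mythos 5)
Your proof is correct and follows essentially the same route as the paper's: both exploit the construction's bound $b_{i-1}$ on the ratio $\frac{b}{a}$ of intervals $[a,b]\subset\mathcal M_{s_{i-1}}(B_{k_{i-1}})$, together with the growth condition $c_{k_i'}>b_{i-1}^3\,c_{k_{i-1}'}^2$, to force a point $x'$ of $\bigl(\mathcal M_{s_{i-1}}(B_{k_{i-1}})\bigr)^c$ inside a multiplicatively shrunken subinterval of $[\frac{x}{s_i},xs_i]$. The only cosmetic difference is that you shrink by the exact factor $s_{i-1}^2$, first computing $\mathcal M_{s_{i-1}}\bigl([\frac{y}{s_{i-1}},ys_{i-1}]\bigr)=[\frac{y}{s_{i-1}^2},ys_{i-1}^2]$ and hence the precise admissible window $J$, whereas the paper shrinks by $t^2$ with $t=\max\{b_{i-1},c_{k_{i-1}'}\}$ and then passes from $\mathcal M_t$ to $\mathcal M_{s_{i-1}}$ via $s_{i-1}<t$; your closing remark about integer versus real endpoints is absorbed by exactly the same slack the paper implicitly relies on.
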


\emph{Proof of claim.}
Let $x\in \bigr(\mathcal M_{s_i}(B_{k_i})\bigl)^c$.  Then $\mathcal M_{s_i}(x)\cap B_{k_i} =\emptyset$ which implies that $[\frac{x}{s_i},xs_i]$ in $A_{k_i}$.

Let $a = \frac{x}{s_i}$ and $ b = xs_i$. Then $\frac xa, \frac bx = s_i= \frac{c_{k_i'}}{3}$.  This implies that $\frac ba >\bigl(\frac{c_{k_i'}}{3}\bigr)^2>\bigl( \frac{b_{i-1}^9\cdot c_{k_{i-1}'}^4}{9}\bigr)$.

Let $t= \max \{b_{i-1}, c_{k_{i-1}'}\}$.  Since $\frac{b}{t^4 a} > b_{i-1}$, $[at^2, \frac {b^2}{t^2}]$ contains a point $x'\in \bigr(\mathcal M_{s_{i-1}}(B_{k_{i-1}})\bigl)^c$. Then the inequality $t\geq c_{k_{i-1}'} >s_{i-1}$, along with the inclusion $\mathcal M_t\bigl( [\frac{x'}{t}, x't]\bigr)\subset[a,b]$ imply that $\mathcal M_{s_{i-1}}\bigl([\frac{x'}{s_{i-1}}, x's_{i-1}]\bigr)\subset [a,b]$.  This completes the proof of the claim.

Fix $n$.  The claim shows that we can find a nested sequence of intervals $[a_{1,n},b_{1,n}]\subset [a_{2,n},b_{2,n}] \subset \cdots\subset [a_{n,n},b_{n,n}]$ such that $[a_{1,n},b_{1,n}]\subset A_{k_i}$ and $\mathcal M_{s_{i-1}}\bigl([a_{i,n}, b_{i,n}]\bigr)\subset [a_{i+1,n}, b_{i+1,n}]$.

Let $d_n = (a_{1,n}b_{1,n})^\frac12$.  Then $\bigcup\limits_{n>i}[\dfrac{d_n}{s_i}, s_id_n] \subset A_{k_n}$.  Therefore $X$ is $\epsilon$-coarsely loop divisible with respect to the pair $\bigl(\omega, d\bigr)$ for all $\epsilon>0$, since $s_i$ diverges.
\end{proof}

This gives us the following analogue to \ref{pap2}.  Rather than require a bound on the number of pieces in a partition, we only require a linear bound on the diameter of partitions and $\epsilon$-coarsely loop divisible for all $\epsilon>0$.

\begin{prop}\label{simply}\ulabel{simply}{Proposition}
Suppose that for some fixed pair $\bigl(\omega, d\bigr)$, a complete geodesic metric space $X$ is $\epsilon$-coarsely loop divisible for all $\epsilon>0$. If there exists an $l, L, N$ and an increasing function $f:\N \to\N$ such that every loop $\gamma$ in $X$ with $|\gamma|\geq L$ has a partition $\Pi$ of $\gamma$ with the property that
    \begin{enumerate}[(i)]
        \item $\Pi$ has at most $f\circ\vartheta^l(|\gamma|)$ pieces,
        \item $\Pi$ is a $\frac{|\gamma|}{2}$-partition of $\gamma$, and
        \item $\diam{\Pi}\leq N|\gamma|$
    \end{enumerate}

then $\con$ is simply connected.
\end{prop}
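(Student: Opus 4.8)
The plan is to adapt Papasoglu's iteration scheme from the proof of \ref{pap2}, replacing his uniform bound on the number of pieces --- which we do not have --- with the linear diameter bound (iii), which is exactly what is needed to control the construction. The argument has three stages: first transfer the hypotheses (i)--(iii) from loops in $X$ to loops in $\con$; second, iterate partitions to build a candidate nullhomotopy disc for an arbitrary loop; third, verify that the resulting map is continuous.

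First I would show that every loop $\gamma$ in $\con$ admits a $\frac{|\gamma|}{2}$-partition $\Pi$ with $\diam{\Pi}\le N|\gamma|$. Write $\gamma=(\gamma_n)$ using \ref{limitloop}, so that $|\gamma_n|=|\gamma|\,d_n$ $\omega$-almost surely and in particular $|\gamma_n|\ge L$ eventually. Choosing $0<\delta<|\gamma|<\epsilon$, Lemma \ref{lemmaiterated} produces an $\omega$-large set on which $\vartheta^l(|\gamma_n|)\le K$, so hypothesis (i) bounds the number of pieces of the supplied partition $\Pi_n$ of $\gamma_n$ by the constant $f(K)$, independent of $n$. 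Now \ref{pap1} applies and yields a $\frac{|\gamma|}{2}$-partition of $\gamma$ with at most $f(K)$ pieces; inspecting its construction (the vertices of the limit partition are the $\omega$-limits of the vertices of $\Pi_n$) together with $\diam{\Pi_n}\le N|\gamma_n|$ shows the limit partition satisfies $\diam{\Pi}\le \lim^\omega N|\gamma_n|/d_n = N|\gamma|$. Thus (ii) and (iii) pass to the cone for loops of arbitrary positive length, and it is precisely here that $\epsilon$-coarse loop divisibility \emph{for all} $\epsilon>0$ enters, guaranteeing partitionability at every scale.

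Next I would fix an arbitrary loop $\gamma$ in $\con$ and iterate. Partition $\gamma$, extend to the $1$-skeleton as in \ref{extend}, and partition each resulting piece; since each partition has mesh at most half the length, the pieces at level $k$ are loops of length at most $|\gamma|/2^k$, and the downward iteration never stalls by the previous paragraph. This produces a nested sequence of cellular decompositions of $\mathbb D$ together with compatible maps $h_k$ on their $1$-skeletons, defined on a dense subset of $\mathbb D$. The key estimate is a radius bound: measuring from the basepoint, the full iterated filling $R(\ell)$ of a loop of length $\le\ell$ satisfies $R(\ell)\le N\ell+R(\ell/2)$, because the top partition lies within $N\ell$ of the basepoint by (iii) and each sub-filling is attached at a vertex of the top partition. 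Hence $R(\ell)\le 2N\ell$ and the full filling has diameter at most $4N\ell$; note that measuring the radius from the basepoint, rather than estimating distances between two sub-fillings directly, is what keeps this recursion convergent.

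The main obstacle is the continuity of the limiting map $h:\mathbb D\to\con$, and this is exactly what the radius bound resolves. For a point $p\in\mathbb D$ lying in a nested sequence of $2$-cells $D_0\supset D_1\supset\cdots$, the boundary loop $\partial D_k$ has length at most $|\gamma|/2^k$, so the image of the subdisc over $D_k$ --- the full filling of $\partial D_k$ --- has diameter at most $4N|\gamma|/2^k\to 0$; any neighbourhood of $p$ eventually contains such a $D_k$, giving continuity at $p$, and points landing on the limit $1$-skeleton are handled by the same estimate applied to the cells on either side. Since $h|_{\partial\mathbb D}=\gamma$ and $\gamma$ was arbitrary, every loop in $\con$ is nullhomotopic, so $\con$ is simply connected. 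I expect the only delicate points to be the diameter-preservation in \ref{pap1} (a routine inspection of the limiting construction) and the bookkeeping that realizes the nested decompositions as genuinely compatible, shrinking cell structures on $\mathbb D$, so that $h$ is well defined on the dense skeleton and continuous after extension.
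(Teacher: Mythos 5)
Your overall scheme---transfer (i)--(iii) to the cone via \ref{lemmaiterated} and \ref{pap1}, iterate partitions, and use the telescoping estimate $\sum_k N|\gamma|/2^k$ to obtain a continuous limit disc of diameter $O(N|\gamma|)$---is exactly the paper's Step 1, and your radius recursion $R(\ell)\le N\ell + R(\ell/2)$ is a correct (if differently bookkept) version of the paper's induction hypothesis $\d(x,\im\Pi_{k-1})\le N|\gamma|/2^k$. But there are two genuine gaps, and they are precisely the points that the paper's two-step structure is designed to avoid. First, you cite \ref{limitloop} as supplying representatives with $|\gamma_n|=|\gamma|\,d_n$ $\omega$-almost surely. \ref{limitloop} gives no length control whatsoever: arc length is only lower semicontinuous under $\omega$-limits, and the paper warns immediately after \ref{limitloop} that geodesics need not be limits of geodesics; the same caution applies to lengths of representatives. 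If $\lim^\omega |\gamma_n|/d_n>|\gamma|$, then $|\gamma_n|$ need not lie in the window $[\delta d_n,\epsilon d_n]$ you feed to \ref{lemmaiterated}, and---worse---the supplied partitions of $\gamma_n$ have mesh only at most $|\gamma_n|/2$, whose $\omega$-limit can exceed $|\gamma|/2$, so the limiting object need not be a $\frac{|\gamma|}{2}$-partition of $\gamma$ at all. For rectifiable loops this is repairable (choose piecewise-geodesic representatives through lifts of the points $\gamma(i/2^k)$, so that $\lim^\omega|\gamma_n|/d_n=|\gamma|$), but it requires an argument you have not given. The paper sidesteps it entirely by running Step 1 only for geodesic $n$-gons that \emph{are} limits of geodesic $n$-gons from $X$, where length-controlled representatives exist by construction, and by arranging in the iteration that every piece of the limit partition is again such a limit.

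Second, and more seriously, your final sentence quantifies over an arbitrary loop $\gamma$ in $\con$, but every stage of your construction presupposes $|\gamma|<\infty$: for a non-rectifiable loop the hypotheses (i)--(iii) say nothing, there is no $\frac{|\gamma|}{2}$-partition to start from, and the scale ladder $|\gamma|/2^k$ is meaningless. Showing that all \emph{rectifiable} loops bound discs does not by itself yield simple connectivity. This is exactly what the paper's Step 2 supplies: nested inscribed $2^n$-gons $Q_n$ decomposing $\mathbb D$ into triangles $A_i^n$, geodesic polygons $\gamma_i^n$ (chosen as limits of geodesic polygons from $X$) agreeing with $\gamma$ on $S^1$ and with one another on shared edges, each filled by Step 1 with a disc of diameter at most $2N\delta_i^n$, where $\delta_i^n\to 0$ by uniform continuity of $\gamma$; the resulting map $h$ is continuous and extends to a nullhomotopy of an arbitrary continuous $\gamma$. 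With your radius bound in hand this approximation step goes through verbatim, but without it---or some substitute handling non-rectifiable loops---your argument proves a strictly weaker statement than the proposition.
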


\begin{proof}
Suppose that for some fixed pair $\bigl(\omega, d\bigr)$ and all $\epsilon>0$, $X$ is $\epsilon$-coarsely loop divisible. Fix $l,L,N$ and $f:\R\to\R$ as in statement of the lemma.

We will break the proof into two parts.  First we will show that every geodesic $n$-gon $\alpha$ in $\con$ which is a limit of geodesic $n$-gons from $X$ bounds a disc of diameter at most $2N|\alpha|$.  We will then show that this is enough to imply that all loops are nullhomotopic.

\textbf{Step 1.}
Let $\alpha$ be a geodesic $n$-gon in $\con$ such that $\alpha(t) = \bigl(\alpha_n(t)\bigr)$ where $\alpha_n$ is a geodesic $n$-gon in $X$.  By hypothesis; for each $n$ such that $|\alpha_n|>L$, there exists a partition $\Pi_n$ of $\alpha_n$ which satisfy conditions $(\emph{i})-(\emph{iii})$ of the lemma.

By \ref{lemmaiterated}, there exists a $K$ and an $\omega$-large set $A$ such that $(\vartheta^l)^{-1}(\bigcup\limits_{n\in A}\bigl[|\frac{|\alpha| d_n}{2},2|\alpha| d_n\bigr])$ is bounded by $K$.  We will assume that for all $n\in A$, $|\alpha_n|\in \bigcup\limits_{n\in A}\bigr[\frac{|\alpha| d_n}{2},2|\alpha| d_n\bigl]$.  Thus $\Pi_n$ has at most $f(K)$ pieces $\omega$-almost surely.

Then \ref{pap1} implies that the partitions $\Pi_n$ induce a partition $\Pi$ of $\alpha$ which satisfies conditions $(1)$ and $(2)$ of the lemma.  In Papasoglu's proof of \ref{pap1}, $\Pi$ is just the $\omega$-limit of the partitions $\Pi_n$; thus, condition $(3)$ is also satisfied for $\Pi$.

Fix $\gamma$ a geodesic $n$-gon in $\con$  such that  $\gamma(t) = \bigl(\gamma_n(t)\bigr)$ for $\gamma_n$ a geodesic $n$-gon in $X$.

We have shown that there exists a partition $\Pi_1: P_1^{(0)}\to \con$ of $\gamma$ into pieces of length $|\gamma|/2$ with the diameter of the partition no greater than $N|\gamma|$.

Proceeding by induction, suppose that we have defined $\Pi_{k}: P_k^{(0)}\to \con$ a partition of $\gamma$ into pieces of length $\frac{|\gamma|}{2^{k}}$ for $k<i$ such that for all $1< k\leq i-1$

\begin{itemize}
    \item $\Pi_{k}$ extends $\Pi_{k-1}$
    \item for $x\in\im{\Pi_k}$ $\d(x, \im{\Pi_{k-1}})\leq \dfrac{N|\gamma|}{2^{k}}$.
\end{itemize}

The partition $\Pi_{i-1}$ extends to a map $\widetilde \Pi_{i-1}$ on the $(1)$-skeleton of $P_{i-1}$ as in \ref{extend}.  Then we can partition each of the subloops into pieces of length less than $|\gamma|/2^{i}$ with the desired diameters.  We can then use these partitions to extend $\Pi_{i-1}$ to $\Pi_i$ satisfying the two induction hypothesis.

For all $i>j$; if $x\in\im{\Pi_i}$, then $\d(x, \im{\Pi_{j}})\leq \sum\limits_{s=j}^i\dfrac{N|\gamma|}{2^{s}}$.  Hence, $\Pi_i$ converges to a continuous function from the unit disc into $\con$ which extends $\gamma$.  Therefore $\gamma$ bounds a disc of diameter $2N|\gamma|$.  This completes Step 1.

\textbf{Step 2.}
Let $Q_n$ be the convex hull of the regular $2^n$-gon inscribed in $S^1$, the unit circle in the plane with the standard Euclidean metric.  Then $Q_n$ has a natural cell decomposition with $2^n$ vertices and $2^n$ edges and one 2-cell.  Furthermore, we may assume that the $0$-skeleton of $Q_n$ form a nested sequence of subsets of $S^1$. Let $A_1^2= Q_2$ which has diameter $2$.  For $n>2$,  $Q_{n}\backslash (\operatorname{interior}(Q_{n-1}))$ is a set of $2^n$ triangles with vertices on $S^1$ each of which share a unique edge with $Q_{n-1}$ and have diameter less than $\frac{\pi}{2^{n-1}}$.  Let $\{A_i^n\}_{i=1}^{2^n}$ be this set of triangles.  Then $A= \bigcup\limits_{i,n} A_i^n$ covers the interior of the unit disc and a dense subset of its boundary.

Fix a loop $\gamma:S^1 \to \con$.

We may choose a geodesic $4$-gon $\gamma_1^2: \partial A_2^{(1)} \to \con$ such that $\gamma_1^2|_{Q_2\cap S^1} = \gamma |_{Q_2\cap S^1}$ and $\gamma_1^2$ is the limit of geodesic $4$-gons from $X$.  We can inductively define geodesic $3$-gons $\{\gamma_i^n: \partial A_i^n\to \con\}$

\begin{enumerate}[i)]
    \item \label{welldefined}$\gamma_i^n|_{\partial A_i^n\cap Q_{n-1}}= \gamma^{n-1}_j|_{\partial A_i^n\cap Q_{n-1}}$ for some $j$ and
    \item \label{restrict}$\gamma_i^n|_{\partial A_i^n\cap S^1}= \gamma|_{\partial A_i^n\cap S^1}$.
\end{enumerate}

Using Step 1,  we can define maps $\{h_i^n: A_i^n \to\con\}$ such that $h_i^n$ is a nullhomotopy of  $\gamma_i^n$ and $\diam{h_i^n}$ no greater than $2N\delta_i^n$ where $\delta_i^n$ is the sum of the distances between the image  of adjacent vertices of $A_i^n$.

This defines a function $h: A\to \con$ by $h(a) = h_i^n(a)$ for some $i$ and $n$.  This is well defined by Condition (\ref{welldefined}).  Since $\gamma$ is continuous on a compact set; for ever $\epsilon>0$, there exists a $K$ such that $2N\delta_i^n< \epsilon$ for all $n>K$.  Thus $h$ is continuous on $A$.  By Condition (\ref{restrict}), $h|_{A\cap S^1} = \gamma|_{A\cap S^1}$ which implies that $h$ extends to a nullhomotopy of $\gamma$.

\end{proof}

\begin{cor}\label{simply2}\ulabel{simply2}{Corollary}
Let $G$ be a group and $S$ a finite generating set for $G$.  Suppose that there exists an $l, L, N$ and an increasing function $f:\N \to\N$ such that every loop $\gamma$ in $\Gamma(G,S)$ with $|\gamma|\geq L$ has a partition $\Pi$ of with the property that
    \begin{enumerate}[(i)]
        \item $\Pi$ has at most $f\circ\vartheta^l(|\gamma|)$ pieces,
        \item $\Pi$ is a $\frac{|\gamma|}{2}$-partition of $\gamma$, and
        \item $\diam{\Pi}\leq N|\gamma|$.
    \end{enumerate}
Then at least one of the following occurs.

\begin{enumerate}[(A)]
    \item $G$ has an asymptotic cone which is not semilocally simply connected and has an uncountable fundamental group.
    \item Every asymptotic cone of $G$ is locally simply connected and $G$ has an asymptotic cone which is simply connected.
\end{enumerate}

\end{cor}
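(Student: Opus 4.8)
The plan is to run the dichotomy packaged in Proposition \ref{gldp for all}. Write $X=\Gamma(G,S)$; as the Cayley graph of $G$ it is a complete homogeneous geodesic metric space, so every result of this section applies to it, and the two conclusions (A) and (B) will correspond exactly to whether the hypothesis of \ref{gldp for all} fails or holds. Thus I would split on the statement ``for every pair $\bigl(\mu,(p_n)\bigr)$ there is an $\epsilon>0$ with $X$ being $\epsilon$-coarsely loop divisible with respect to $\bigl(\mu,(p_n)\bigr)$,'' noting that the standing partition hypothesis (i)--(iii) is carried along unchanged in both branches.

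Suppose first that this hypothesis fails, i.e.\ there is a pair $\bigl(\mu,(p_n)\bigr)$ for which $X$ is not $\epsilon$-coarsely loop divisible with respect to $\bigl(\mu,(p_n)\bigr)$ for every $\epsilon>0$. Then I would invoke Theorem \ref{uncountable} to see that $\operatorname{Con}^\mu\bigl(X,e,(p_n)\bigr)$ has uncountable fundamental group, and the contrapositive of Proposition \ref{slsc} to see that this same cone is not semilocally simply connected. This places us in outcome (A) with no work beyond citing the two results.

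In the complementary branch the hypothesis of \ref{gldp for all} holds, so I would first extract a pair $\bigl(\omega,d\bigr)$ with $X$ being $\epsilon$-coarsely loop divisible for \emph{all} $\epsilon>0$, and feed this pair together with (i)--(iii) into Proposition \ref{simply} to conclude that $\operatorname{Con}^\omega\bigl(X,e,d\bigr)$ is simply connected; this gives the second clause of (B). For the first clause I must show every cone $\operatorname{Con}^\mu\bigl(X,e,(p_n)\bigr)$ is locally simply connected. Fixing an $\epsilon$ with $X$ $\epsilon$-coarsely loop divisible with respect to $\bigl(\mu,(p_n)\bigr)$, I would run the disc-building argument in the proof of \ref{simply}, but restricted to loops of length less than $\epsilon$: along the relevant scales Lemma \ref{lemmaiterated} bounds $\vartheta^l$ $\omega$-almost surely, so $f\circ\vartheta^l$ is $\omega$-bounded, Proposition \ref{pap1} yields a limit partition with boundedly many pieces, and since subdivision only shortens loops the whole iteration remains in the length-$<\epsilon$ regime and produces a disc of diameter at most $2N|\gamma|$.

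The main obstacle is upgrading ``short loops bound proportionally small discs'' to genuine local simple connectivity, since a loop confined to a small ball can have arbitrarily large (even infinite) length. Here I would argue exactly as in the proof of Lemma \ref{corb}: after approximating by inscribed geodesic polygons as in Step 2 of \ref{simply}, a loop of diameter less than $\tfrac{\epsilon}{8}$ and length at least $\epsilon$ has, by Remark \ref{O}, a five-piece partition of mesh less than half its length into a short $4$-gon (length $<\epsilon$) and four $2$-gons still of diameter less than $\tfrac{\epsilon}{8}$; iterating \ref{O} shrinks lengths geometrically until they drop below $\epsilon$, where the previous paragraph applies. The delicate point to verify carefully is that the discs assembled through this cascade of reductions have diameter bounded uniformly in terms of $\epsilon$ and $N$; granting that, every loop of diameter less than $\tfrac{\epsilon}{8}$ bounds a disc of controlled diameter, which is precisely uniform local simple connectivity and completes outcome (B).
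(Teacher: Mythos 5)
Your proposal is correct and follows essentially the same route as the paper: the same dichotomy on the hypothesis of \ref{gldp for all}, with \ref{uncountable} and the contrapositive of \ref{slsc} giving outcome (A), and \ref{gldp for all} plus \ref{simply} giving the simply connected cone in (B). Your careful localization of the disc-building argument (via \ref{lemmaiterated}, \ref{pap1}, \ref{O}, and the diameter control as in \ref{corb}) is exactly what the paper compresses into the sentence ``the proof of \ref{simply} implies that every asymptotic cone of $G$ is locally simply connected,'' so you have merely made explicit a step the authors leave implicit.
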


\begin{proof}
If for some ultrafilter and scaling sequence $G$ is not $\epsilon$-coarsely divisible for every $\epsilon>0$, then $G$ has an asymptotic cone which is not semilocally simply connected and has uncountable fundamental group.

Otherwise, for every pair $\bigl(\omega, d\bigr)$; $G$ is $\epsilon$-coarsely divisible with respect to $\bigl(\omega, d\bigr)$ for some $\epsilon>0$.  The proof of \ref{simply} implies that every asymptotic cone of $G$ is locally simply connected.

\ref{gldp for all} implies that there exists a pair $\bigl(\omega, d\bigr)$ such that $G$ is $\epsilon$-coarsely divisible for every $\epsilon>0$ with respect to $\bigl(\omega, d\bigr)$.  \ref{simply} implies that $\gcon$ is simply connected.
\end{proof}

\section{Examples}\label{section examples}

\begin{lem}\label{polynomial length}\ulabel{polynomial length}{Lemma}
Suppose that $G$ is a group with a finite presentation $\pres{S}{R}$ which has an exponential isoperimetric function.  If there exists a sequence of loops $\gamma_n$ in $\Gamma(G,S)$ such that $|\gamma_n|$ grows at most linearly and $\area{\gamma_n}$ has an exponential lower bound, then there exists a sequence of absolutely non-divisible loops in $\Gamma(G,S)$.
\end{lem}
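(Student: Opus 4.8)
The plan is to play the exponential isoperimetric \emph{upper} bound against the exponential \emph{lower} bound on area, using the fact that area is subadditive over a partition. First I would fix an isoperimetric function $\delta$ for $\pres{S}{R}$ with $\delta(m)\le Cb^{m}$ for constants $C\ge 1$ and $b>1$, and record the hypotheses as $|\gamma_n|\le \lambda n$ (linear length) and $\area{\gamma_n}\ge a^{n}$ for some $a>1$ (exponential lower bound on area); note that the latter forces $\area{\gamma_n}\to\infty$.

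The key step would be the estimate that any partition $\Pi$ of a loop $\gamma$ in $\Gamma(G,S)$ with mesh less than $\tau$ into $k$ pieces satisfies $\area{\gamma}\le k\,\delta(\tau)$. Indeed each piece $\Pi(\partial D_i)$ is a closed edge path of length less than $\tau$, hence a word equal to $1$ in $G$ bounding a van Kampen diagram of area at most $\delta(\tau)$; gluing these $k$ fillings across the shared interior (geodesic) edges of the underlying planar partition $P$ yields a van Kampen diagram with boundary label $\lab{\gamma}$ and at most $k\,\delta(\tau)$ cells. I expect verifying this gluing — that the fillings of adjacent pieces agree along the common geodesic edges, so that the result is a genuine van Kampen diagram for $\gamma$ — to be the one step requiring care; the remainder is arithmetic. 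Applying the estimate with $\tau=|\gamma_n|/M$ and $k=P\!\left(\gamma_n,|\gamma_n|/M\right)$ and using $|\gamma_n|\le\lambda n$ gives
$$a^{n}\le \area{\gamma_n}\le P\!\left(\gamma_n,\tfrac{|\gamma_n|}{M}\right)\cdot Cb^{\lambda n/M},$$
so that $P\!\left(\gamma_n,|\gamma_n|/M\right)\ge C^{-1}\bigl(a\,b^{-\lambda/M}\bigr)^{n}$. Choosing $M>\lambda\log b/\log a$ makes the base exceed $1$, whence $P\!\left(\gamma_n,|\gamma_n|/M\right)\to\infty$, which is the first clause of absolute non-divisibility.

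It then remains to check the two length clauses. Combining $\area{\gamma_n}\ge a^{n}$ with $\area{\gamma_n}\le\delta(|\gamma_n|)\le Cb^{|\gamma_n|}$ gives $|\gamma_n|\ge (n\log a-\log C)/\log b$, a linear \emph{lower} bound; together with the hypothesised linear upper bound $|\gamma_n|\le\lambda n$ this shows both that $|\gamma_n|\to\infty$ and that $\{|\gamma_{n+1}|/|\gamma_n|\}$ is bounded. Hence, after discarding finitely many initial terms, $(\gamma_n)$ itself is absolutely non-divisible with constant $M$; if one prefers, \ref{absulotely non-divisible} extracts the required subsequence from the unboundedness of $|\gamma_n|$ and the bound on consecutive ratios alone, with the $P$-condition surviving automatically. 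This completes the proof.
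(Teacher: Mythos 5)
Your proof is correct and takes essentially the same route as the paper's: both play the exponential lower bound on $\area{\gamma_n}$ against subadditivity of area over a $\frac{|\gamma_n|}{M}$-partition, yielding $\area{\gamma_n}\le P\bigl(\gamma_n,\tfrac{|\gamma_n|}{M}\bigr)\,\delta\bigl(\tfrac{|\gamma_n|}{M}\bigr)$ and choosing $M$ large enough that the base $a\,b^{-\lambda/M}$ exceeds $1$, forcing $P\bigl(\gamma_n,\tfrac{|\gamma_n|}{M}\bigr)\to\infty$. Your only additions are to spell out the van Kampen gluing behind the estimate $\area{\gamma}\le k\,\delta(\tau)$ and the linear lower bound on $|\gamma_n|$ that gives boundedness of the consecutive ratios, details the paper leaves implicit in its appeal to \ref{absulotely non-divisible}.
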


\begin{proof}
Let $\gamma_n$ be a sequence of loops in $\Gamma(G,S)$ such that $|\gamma_n|\leq Ln$ and $Ab^n\leq \area{\gamma_n} = \delta(|\gamma_n|)\leq Dc^{Ln}$ where $\delta$ is the Dehn function for the presentation $\pres{S}{R}$ and $A,b,c,D, L$ are positive constants.  Fix $M$ such that $c^{\frac LM}\leq b$.

Suppose that for some subsequence $n_i$, $\gamma_{n_i}$ has a $\frac{|\gamma_{n_i}|}{M}$-partition with at most $K$ pieces where $K$ is independent of $i$.  Then

\begin{align*}
Ab^{n_i} \leq \area{\gamma_{n_i}} &= \delta(|\gamma_{n_i}|)\leq K\delta(\frac{|\gamma_{n_i}|}{M}) \leq K Dc^{\frac{Ln_i}{M}}.
\end{align*}

This implies that $\dfrac{b^{n_i}}{c^{\frac{Ln_i}M}} = \Bigl(\dfrac{b}{c^{\frac LM}}\Bigr)^{n_i}$ is bounded independent of $i$  which contradicts our choice of $M$.

Thus $P\bigl(\gamma_n, \frac{|\gamma_n|}{M}\bigr)$ diverges and the lemma follows from \ref{absulotely non-divisible}.

\end{proof}

\begin{cor}\label{examples}\ulabel{examples}{Corollary}
Every asymptotic cone of the following groups is not semilocally simply connected and has an uncountable fundamental group which is not free and not simple.  In addition, any decomposition of the fundamental group of an asymptotic cone of one of the following groups as a free product has a factor which is not free and uncountable.

\begin{enumerate}
    \item\label{1} $SL_3(\mathbb Z)$;
    \item\label{2} Baumslag-Solitar groups -- $BS_{pq} = \pres{a,t}{t^{-1}a^pt = a^q}$ for $|p|\neq |q|$;
    \item\label{2i} the $3$-manifold $Sol_3$, $\mathbb R^3$ endowed with the Riemannian metric $ds^2 = e^{2z}dx^2+ e^{-2z}dy^2+dz^2$;
    \item\label{2ii} any extension of $\mathbb R^n$ by $\mathbb R$ via a matrix with all real eigenvalues of norm strictly greater than $1$ and at least two eigenvalues with different sign;
    \item\label{3} Baumslag-Gertsen group -- $\pres{a,t}{(t^{-1}a^{-1} t) a (t^{-1}a t) = a^2}$;
    \item\label{5} $Out(F_n)$ and $Aut(F_n)$ for $n\geq3$;
    \item\label{4} $G_{\ref{4}}=\pres{a,s,t}{[a,a^t]= [s,t]= 1, a a^t = a^s}$; and
    \item\label{6} $G_{\ref{6}} = \pres{\theta_1,\theta_2,a,k}{a^{\theta_i}= a, k^{\theta_i}= ka, i = 1,2}$.

\end{enumerate}
\end{cor}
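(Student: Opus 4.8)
The plan is to reduce the entire statement to one assertion per group: that $\Gamma(G,S)$ contains a sequence of \emph{absolutely non-divisible} loops. Once such a sequence is in hand, every conclusion follows mechanically. Indeed, \ref{absuncountable} gives directly that every asymptotic cone of $G$ has uncountable fundamental group and fails to be semilocally simply connected at every point; \ref{ndivisible} shows that the sequence forces $G$ to fail $\epsilon$-coarse loop divisibility with respect to \emph{every} pair $\bigl(\omega,d\bigr)$ and every $\epsilon>0$; feeding this into \ref{notfree} (for each fixed pair) yields that the fundamental group of $\gcon$ is not free and that any free-product decomposition has an uncountable non-free factor, while \ref{notsimple} yields that it is not simple. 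The homogeneity hypotheses of these results are satisfied because the asymptotic cone of a finitely generated group is transitive by isometries, which is the only form of homogeneity their proofs actually use ($Sol_3$ and the Lie-group extensions are moreover homogeneous outright, taken with a cocompact lattice as the finitely presented model). Thus the corollary is proved as soon as the loops are produced group by group.

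For every entry except the Baumslag--Gersten group I would produce the loops via \ref{polynomial length}, whose hypotheses are (a) an exponential upper bound on the Dehn function and (b) a family of loops of linearly bounded length whose areas admit an exponential lower bound. Each of $SL_3(\Z)$, the Baumslag--Solitar groups $BS_{pq}$ with $|p|\neq|q|$, the group $Sol_3$, the abelian-by-cyclic extensions of $\mathbb R^n$ with the stated eigenvalue conditions, $Out(F_n)$ and $Aut(F_n)$ for $n\geq 3$, and the presented groups $G_{\ref{4}}$ and $G_{\ref{6}}$ is known to have Dehn function equivalent to $2^n$; hypothesis (a) is then immediate, and since such a Dehn function is unbounded there exist words $w_n$ of linear length realizing the exponential lower bound, giving (b). The explicit efficient families are standard: for $BS_{pq}$ the classical words arising from $t^{-1}a^p t=a^q$, for the solvable examples the words detecting exponential distortion of the abelian subgroup, and for the remaining groups the exponential isoperimetric lower bounds already recorded in the references. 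Applying \ref{polynomial length} then yields the absolutely non-divisible sequence.

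The Baumslag--Gersten group is the single case in which \ref{polynomial length} does not apply verbatim, since its Dehn function is super-exponential and hypothesis (a) fails. Here I would argue directly: choose loops $\gamma_n$ of linearly bounded length realizing the Dehn function, so $\area{\gamma_n}=\delta(|\gamma_n|)$. If some subsequence admitted an $|\gamma_n|/M$-partition into at most $K$ pieces, then summing the areas of the pieces gives $\delta(|\gamma_n|)=\area{\gamma_n}\leq K\,\delta(|\gamma_n|/M)$; but for a super-exponential $\delta$ the ratio $\delta(n)/\delta(n/M)$ tends to infinity, so this inequality fails for all large $n$. Hence $P\bigl(\gamma_n,|\gamma_n|/M\bigr)\to\infty$, and after passing to a subsequence as in \ref{absulotely non-divisible} the $\gamma_n$ are absolutely non-divisible. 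The same computation in fact shows that \ref{polynomial length} remains valid whenever the Dehn function grows at least exponentially along the relevant lengths, so this case is a mild extension of that lemma rather than a genuinely new argument.

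The main obstacle is step (b): exhibiting, for the structurally more complicated groups---$SL_3(\Z)$, $Out(F_n)$, $Aut(F_n)$, the $Sol$-type extensions, and the two presented groups $G_{\ref{4}}$ and $G_{\ref{6}}$---an explicit family of loops of linear perimeter whose area is exponentially large, i.e.\ verifying that the Dehn function is at least exponential with witnesses of linear length. These are precisely the (sometimes delicate) isoperimetric lower bounds established for these groups in the cited literature, so the real work is to locate and quote them correctly. Everything downstream of producing the loops is formal and uniform across the entire list.
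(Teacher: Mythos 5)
Your overall architecture is the same as the paper's: reduce everything to producing an absolutely non-divisible sequence of loops in each group, then invoke \ref{ndivisible} together with \ref{absuncountable}, \ref{notfree} and \ref{notsimple} to get all the stated conclusions at once, with \ref{polynomial length} doing the work for the exponential-Dehn-function groups and a direct tower-function computation for the Baumslag--Gersten group. That last computation is in fact exactly what the paper does, using Gersten's loops of length $3\cdot 2^{k+1}$ and Platonov's isoperimetric function; and for items (\ref{1})--(\ref{5}) and (\ref{4}) your plan matches the paper, which cites Epstein--Thurston, Burillo, Cornulier, Kassabov--Riley and Bridson--Vogtmann for the required loop families.

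However, there is one genuine error: your blanket claim that $G_{\ref{6}}$ has Dehn function equivalent to $2^n$ is false, and your method cannot be repaired for this group by an area argument at all. As the paper states explicitly, the Ol'shanskii--Sapir group $G_{\ref{6}}$ has \emph{cubic} Dehn function and linear isodiametric function. Hypothesis (b) of \ref{polynomial length} therefore fails outright: no loop of linear length can have exponentially large area, since its area is bounded by the cubic Dehn function. Worse, the area-comparison mechanism is structurally powerless for polynomial $\delta$: a partition of $\gamma$ into $K$ pieces of mesh $|\gamma|/M$ only yields $\area{\gamma}\leq K\,\delta(|\gamma|/M)$, and for $\delta(n)\simeq n^3$ the ratio $\delta(n)/\delta(n/M)= M^3$ is bounded, so the inequality is satisfiable for any fixed $K\geq M^3$ and no contradiction can arise. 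The paper instead quotes a finer combinatorial result of Ol'shanskii and Sapir: there are loops $\gamma_n$ of length $6n$ in the Cayley complex of $G_{\ref{6}}$ that cannot bound a disc decomposed into at most $l\leq\sqrt{n}$ subdiscs of perimeter $n$, which directly forces $P\bigl(\gamma_n,|\gamma_n|/6\bigr)\to\infty$ and hence absolute non-divisibility. Without this non-area-based input your proof of item (\ref{6}) collapses; indeed $G_{\ref{6}}$ is included in the corollary precisely because it shows the phenomenon is not driven by large Dehn functions.
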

$G_{\ref{4}}$ is of interest since it is metabelian and not polycyclic. It is sometimes referred to as the Baumslag group. $G_{\ref{6}}$ was constructed by Olshanskii and Sapir and has cubic Dehn function and linear isodiametric function.

\begin{proof}
Epstein and Thurson in \cite{ECT} showed the existence of a sequence of loops in $SL_3(\mathbb R)$ and $BS_{pq}$ for $|p|\neq |q|$ satisfying the conditions of \ref{polynomial length}.

The result for Baumslag-Solitar groups  and $Sol_3$ was already known and is due to \cite{Burillopaper}.  Cornulier showed the existence of a sequence of absolutely non-divisible loops for groups of the type (\ref{2ii}) in \cite{Corn}.

Kassabov and Riley in \cite{KRpreprint} showed that the loops in the Cayley graph of $G_{\ref{4}}$ with label $[a,a^{t^n}]$ have the desired properties.
%http://www.math.cornell.edu/~riley/papers/Baumslag_Dehn_function/Baumslag_Dehn_function.html

For $Out(F_n)$ and $Aut(F_n)$ Bridson and Vogtmann exhibit the necessary sequence in \cite{BVpreprint}.
%http://www.math.cornell.edu/~vogtmann/papers/ExponentialDehn/index.html

That leaves only (\ref{3}) and (\ref{6}).  Since $G_{\ref{3}} = \pres{a,t}{(t^{-1}a^{-1} t) a (t^{-1}a t) = a^2}$ has a Dehn function which is greater than any tower of exponentials, we cannot apply \ref{polynomial length}.  Gersten in \cite{Ger} showed the existence of a sequence of loops $\gamma_{k}$ such that $\gamma_{k}$  has length $ 3\cdot2^{k+1}$ and area at least $2^{2^{\left.2^{\ldots^{2} }\right\} k~times}}$.  Platonov in \cite{Plat} showed that $ \delta(n) = 2^{2^{\left.2^{\ldots^{2} }\right\} \log_2(n)~times}}$ is an isoperimetric function for $G_{\ref{3}}$.

Suppose that for some subsequence  ${n_i}$, $P\bigl(\gamma_{n_i},\frac{|\gamma_{n_i}|}{6}\bigr)\leq K$.  Then for all $n_i$

\begin{align*}
2^{2^{\left.2^{\ldots^{2} }\right\} n_i~times}} \leq \area{\gamma_{n_i}} &\leq K 2^{2^{\left.2^{\ldots^{2} }\right\} \log_2(\frac{32^{n_i}}{6})~times}}= K 2^{2^{\left.2^{\ldots^{2} }\right\} (n_i-1)~times}}
\end{align*}

which is a contradiction.  Hence $\gamma_k$ is an absolutely non-divisible sequence of loops and the result follows from \ref{absuncountable}.

Ol'shanskii and Sapir in \cite{OS1} constructed a sequence of loops $\gamma_n$ in the Cayley complex of $G_{\ref{6}} = \pres{\theta_1,\theta_2,a,k}{a^{\theta_i}= a, k^{\theta_i}= ka, i = 1,2}$ such that $\gamma_n$ has length $6n$. Additionally, they showed that $\gamma_n$ cannot bound a disc decomposed into at most $l$ subdiscs of perimeter $n$ where $l\leq \sqrt n$ and hence is an absolutely non-divisible sequence of loops.

\end{proof}

\begin{rmk}\label{one-dimensional}\ulabel{one-dimesional}{Remark}
Suppose that $X$ is a one-dimensional metric space and $Y$ is the support of any finite set of paths in $X$.  Then $Y$ is a compact one-dimensional metric space and thus its fundamental group is locally free, residually free, and residually finite by Theorem 5.11 in \cite{cc3}.  Since $X$ is one-dimensional, the homomorphism from the fundamental group of $Y$ to the fundamental group of $X$ induced by set inclusion is injective (see Theorem 3.7 in \cite{cc3}).  Thus $\pi_1(X, x_0)$ is locally free.

Burillo in \cite{Burillopaper} shows that all asymptotic cones of solvable $BS_{p,q}$ with $|p|\neq |q|$ and $Sol_3$ have topological dimension 1.  As well, any extension of $\mathbb R^n$ by $\mathbb R$ via a matrix with all real eigenvalues of norm strictly greater than $1$ and at least two eigenvalues with different sign will have one-dimensional asymptotic cones (see \cite{Corn}).  Thus the fundamental groups of their cones are locally free.

\end{rmk}

To prove \ref{examples}, we analysed mappings of Hawaiian earrings into asymptotic cones and showed that the induced homomorphism's image had the desired properties.  However, this method doesn't give us much information concerning the structure of the rest of the fundamental group.  When an asymptotic cones of a group is one-dimensional, one can apply standard techniques for one-dimensional space, as in Remark \ref{one-dimensional}, to better understand the structure of the fundamental.  Requiring a dimension constraint on the asymptotic cone is a strong condition and does not apply to many well studied groups.

\subsection{An example of a group with locally simply connected cones which is not simply connected}

In \cite{O1}, it was shown that there exists a group with the following properties.

\begin{thm}
There is a finitely generated group $G$ whose Dehn function $f(n)$ satisfies the following properties:
\begin{enumerate}

\item there are sequences of positive numbers $d_i\to \infty$ and $\lambda_i\to \infty$ such that $f(n) \leq cn^2$ for arbitrary integer $n\in[\frac{d_i}{\lambda_i}, d_i\lambda_i]$ and some constant $c$ and

\item there is a positive constant $c'$ and an increasing sequence of numbers $n_i\to \infty$ such that $\frac{f(n_i)}{n^2_i}\to\infty$ but for every $i$, and for every integer $n$ with $n<c'n_i$, we have $f(n) \leq c'n_i^2$.
\end{enumerate}
\end{thm}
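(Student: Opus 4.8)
The plan is to realize $G$ as the finitely presented group associated to a carefully designed $S$-machine, in the tradition of Sapir--Birget--Rips and Ol'shanskii--Sapir, so that the Dehn function of $G$ is controlled by the time (equivalently space) complexity of the machine's computations. The entire difficulty splits into two parts: first, to engineer a machine whose complexity is quadratic at ``most'' scales but has superquadratic spikes localized at the prescribed lengths $n_i$; and second, to transfer these two-sided complexity estimates into matching two-sided bounds on the areas of van Kampen diagrams.

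First I would fix the target shape of the time function. I would choose the spike lengths $n_i\to\infty$ to be very sparse (say $n_{i+1}\gg n_i$), place the centers $d_i$ of the good intervals in the multiplicative gaps between consecutive $n_i$, and let $\lambda_i\to\infty$ so that the intervals $[d_i/\lambda_i,\, d_i\lambda_i]$ cover almost all of the remaining scales. I would then design a language (or partial recursive function) computed by an $S$-machine $\mathcal M$ whose running time $T(m)$ satisfies $T(m)\le Cm^2$ for $m$ in the good intervals, is superquadratic with $T(n_i)/n_i^2\to\infty$ on a family of hard inputs of length $\sim n_i$, and still obeys the below-scale bound $T(m)\le C'n_i^2$ for all $m<c'n_i$. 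This last clause is the delicate design constraint: the expensive computations must be \emph{triggered} only by inputs whose length lies in a narrow window around $n_i$, so that no short input can force the machine to do costly work.

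Next I would convert $\mathcal M$ into a finite presentation by the standard machine-to-group recipe, building $G$ as an iterated multiple HNN extension whose relations simulate single steps of $\mathcal M$, together with a hub relation that closes up accepting computations. The defining feature of this construction is that a word $w$ is trivial in $G$ precisely when the corresponding input is accepted by $\mathcal M$, and, up to the polynomial corrections built into the construction, the minimal area of a van Kampen diagram for $w$ is comparable to the cost of the accepting computation, hence to $T(|w|)$. Reading off quadratic-area diagrams from the quadratic-time computations at the good scales then yields property (1) together with the upper-bound half of property (2), namely $\area{w}\le c'n_i^2$ for words of length $m<c'n_i$.

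The hard part will be the lower bounds, that is, showing $\area{w_{n_i}}\ge \psi(n_i)\,n_i^2$ with $\psi(n_i)\to\infty$ for the hard words. This requires the diagram-surgery machinery of Ol'shanskii--Sapir: one takes an arbitrary reduced diagram $\Delta$ with $\partial\Delta=w_{n_i}$, decomposes it along the bands corresponding to fixed machine tape and state letters, and argues that any such $\Delta$ must in effect simulate a full accepting computation of $\mathcal M$ on the hard input, forcing $\area{w_{n_i}}$ to be at least the machine's running time $T(n_i)$ and hence superquadratic. The principal obstacles are thus twofold: constructing a machine with exactly this oscillating complexity profile (especially the localization needed for the below-scale clause), and proving the lower bound, namely that no reduced diagram can ``cheat'' the simulation. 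The latter is where the bulk of the technical band and small-cancellation analysis resides.
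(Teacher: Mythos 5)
The paper does not prove this theorem itself; it quotes it from Ol'shanskii \cite{O1}, who indeed constructs $G$ as a multiple HNN extension of a free group simulating an $S$-machine with an oscillating time function --- precisely the strategy you outline. Your proposal is essentially the same route as the cited construction (quadratic upper bounds at the good scales from the machine simulation, superquadratic lower bounds at the $n_i$ via band analysis of reduced diagrams), with the genuinely hard steps you flag --- the machine design localizing the spikes and the no-cheating lower bound --- being exactly where the technical work in \cite{O1} lies.
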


Ol'shanskii construct $G$ as a multiple HNN extension of a free group using $S$-machines.

\begin{cor}
\begin{enumerate}[(A)]\hspace{5in}
    \item There exists an $\omega$ such that $\operatorname{Con}^\omega(G,(n_i))$ has a nontrivial fundamental group.
    \item $\operatorname{Con}^\omega(G,(n_i))$ is locally simply connected for all $\omega$.
    \item $\operatorname{Con}^\omega(G,(d_i))$ has trivial fundamental group for all $\omega$.
\end{enumerate}

\end{cor}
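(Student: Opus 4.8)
The plan is to translate the two analytic properties of the Dehn function $f$ of $G$ (with respect to a fixed finite generating set $S$) into statements about the divisibility function $\vartheta$, and then feed these into the machinery of Sections \ref{section gldp} and \ref{section simply connected cones}. The one external ingredient I would use is Papasoglu's division lemma \cite{pap}: a loop of length $m$ bounding a van Kampen diagram of area at most $Cm^2$ admits a $\tfrac m2$-partition into at most $N=N(C)$ pieces, with $N$ depending only on $C$. Throughout, if a partition into fillable pieces is given, the internal edges are traversed twice with opposite orientation, so $\area{\gamma}$ is at most the sum of the areas of the pieces.

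For part (C) I fix $\omega$ and work with the scaling sequence $(d_i)$. Given $0<\delta<\epsilon$, for all large $i$ we have $\lambda_i>\max\{1/\delta,\epsilon\}$, so $[\delta d_i,\epsilon d_i]\subset[d_i/\lambda_i,d_i\lambda_i]$; on this window $f(m)\le cm^2$, whence the division lemma gives $\vartheta(m)\le N(c)$ for every integer $m$ in the window. Since the cofinite set $A=\{i\text{ large}\}$ is $\omega$-large and $N(c)$ is independent of $\delta$ and $\epsilon$, the group $G$ is uniformly $\epsilon$-coarsely loop divisible for every $\epsilon>0$ with respect to $\bigl(\omega,(d_i)\bigr)$, with a single constant. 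Proposition \ref{pap2} then yields that $\operatorname{Con}^\omega\bigl(G,(d_i)\bigr)$ is simply connected, and as nothing depended on $\omega$ this holds for all $\omega$.

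For part (A) I fix $\omega$, work with $(n_i)$, and choose loops $\gamma_i$ of length $n_i$ realizing the Dehn function, $\area{\gamma_i}=f(n_i)$ (padding by a spur if necessary). Fix $l$ with $2^l>1/c'$. Suppose, for contradiction, that $G$ were $1$-coarsely loop divisible with respect to $\bigl(\omega,(n_i)\bigr)$. Applying Lemma \ref{lemmaiterated} with $\epsilon=1$ and $\delta=\tfrac12$ produces an $\omega$-large $A$ and a constant $K$ with $\vartheta^l\le K$ on $\bigcup_{i\in A}[\tfrac{n_i}{2},n_i]$; in particular each $\gamma_i$ with $i\in A$ has a $\tfrac{n_i}{2^l}$-partition into at most $K$ pieces. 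Each piece is a loop of length $<n_i/2^l<c'n_i$, so the second property of $f$ bounds its filling area by $c'n_i^2$; summing gives $f(n_i)=\area{\gamma_i}\le Kc'n_i^2$ for infinitely many $i$, contradicting $f(n_i)/n_i^2\to\infty$. Hence $G$ is not $1$-coarsely loop divisible with respect to $\bigl(\omega,(n_i)\bigr)$, so by the contrapositive of Lemma \ref{conpart} there is a non-partitionable loop in $\operatorname{Con}^\omega\bigl(G,(n_i)\bigr)$; as in the proof of \ref{slsc} a nullhomotopic loop is partitionable, so this loop is homotopically essential and the fundamental group is nontrivial. This argument is insensitive to $\omega$, which in particular gives the asserted existence.

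For part (B) the same estimate gives the positive side: the second property bounds the area of every loop of length $m<c'n_i$ by $c'n_i^2$, so at any fixed cone-scale $\epsilon<c'$ the division lemma makes $\vartheta$ bounded on $\bigcup_i[\delta n_i,\epsilon n_i]$, i.e. $G$ is $\epsilon$-coarsely loop divisible with respect to $\bigl(\omega,(n_i)\bigr)$ for every $\epsilon<c'$. The remaining task is to upgrade ``small loops are partitionable'' to ``small loops bound small discs,'' which is exactly the role of Section \ref{section simply connected cones}: I would verify that the van Kampen diagrams supplied by the area bound yield partitions of loops of length $<c'n_i$ satisfying the linear diameter estimate $\diam{\Pi}\le N|\gamma|$ of Proposition \ref{simply}, and then run Step 1 of its proof restricted to loops of length $<c'$, producing discs of diameter at most $2N|\gamma|$ and hence local simple connectivity for every $\omega$. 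The main obstacle is precisely this linear diameter estimate: the bound $K$ on the number of pieces in part (A) degenerates as the scale shrinks (it grows like $c'/\epsilon^2$), so Papasoglu's uniform construction is unavailable and the number of pieces cannot control the disc; one must instead bound the diameters of the partitions directly. Since the area bound alone only gives $\diam{\Pi}\le c'n_i^2$, establishing the required linear isodiametric control must draw on the finer structure of Ol'shanskii's $S$-machine construction in \cite{O1} (as for the related group $G_{\ref{6}}$, which has a linear isodiametric function), and this is the crux of part (B).
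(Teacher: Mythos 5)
Your parts (C) and (A) are correct. For (C) you reproduce the paper's argument exactly: since $\lambda_i\to\infty$, the window $[\delta d_i,\epsilon d_i]$ is eventually contained in $[d_i/\lambda_i,d_i\lambda_i]$, where $f(m)\le cm^2$ with $c$ independent of $\delta$ and $\epsilon$, so $G$ is uniformly $\epsilon$-coarsely loop divisible for every $\epsilon>0$ with respect to $\bigl(\omega,(d_i)\bigr)$ and \ref{pap2} applies. For (A) your route differs in a useful way: the paper quotes \cite{OS2} for the intermediate fact that condition (2) produces $b<1$ with $f(n_i)/f(bn_i)\to\infty$, which makes the divisibility function unbounded on $\bigcup_i[bn_i,n_i]$; you instead obtain the unboundedness directly by an area count (if $\gamma_i$ had a $\frac{n_i}{2^l}$-partition into at most $K$ pieces with $2^l>1/c'$, each piece fills with area at most $c'n_i^2$, forcing $f(n_i)\le Kc'n_i^2$, contradicting $f(n_i)/n_i^2\to\infty$). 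Your passage from non-divisibility to a homotopically essential loop via \ref{lemmaiterated}, the contrapositive of \ref{conpart}, and the observation from the proof of \ref{slsc} that nullhomotopic loops in the cone are partitionable is sound, and in fact yields the conclusion for every $\omega$, which is more than (A) asserts. This self-contained version is a reasonable substitute for the paper's citation.

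Part (B) is where your proposal has a genuine gap, and the gap sits exactly where you diverge from the paper. The paper's proof of (B) is one line: it asserts that the plateau bound $f(n)\le c'n_i^2$ for $n<c'n_i$ yields, ``by the same argument'' as in (C), \emph{uniform} $\epsilon$-coarse loop divisibility for the pair $\bigl(\omega,(n_i)\bigr)$ for some $\epsilon>0$, and then local simple connectivity follows from \ref{eldpsimply} (equivalently \ref{corb}); no isodiametric hypothesis and no appeal to Section \ref{section simply connected cones} is made. You reject this route on the grounds that the division constant degenerates like $N(c'/\delta^2)$ as the scale $\delta$ shrinks, so that conditions (1)--(2) only visibly give non-uniform $\epsilon$-coarse divisibility --- a legitimate worry, since on $[\delta n_i,\epsilon n_i]$ the only area bound available from (2) is $c'n_i^2\le (c'/\delta^2)m^2$, and the ratios $n_{i+1}/n_i$ must be unbounded (a bounded ratio would propagate a global quadratic bound, contradicting $f(n_i)/n_i^2\to\infty$), so no $\delta$-independent quadratic estimate follows from the stated hypotheses alone. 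But having discarded the paper's mechanism, you substitute the linear-diameter criterion of Section \ref{section simply connected cones} without verifying its hypothesis $\diam{\Pi}\le N|\gamma|$, deferring it to ``the finer structure of \cite{O1}'' and explicitly labelling it the crux. An unverified crux is not a proof: as written, your argument establishes only that small loops in $\operatorname{Con}^\omega\bigl(G,(n_i)\bigr)$ are partitionable, which by itself does not give local simple connectivity (this is precisely the failure mode your own Section \ref{section simply connected cones} discussion identifies). To close (B) you must either justify the paper's uniformity claim using properties of Ol'shanskii's group beyond (1)--(2), or actually extract the linear isodiametric bound on partitions from \cite{O1}; your proposal does neither.
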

\begin{proof}

Ol'shanskii and Sapir in \cite{OS2} showed that the second condition implies the existence of a $b<1$ such that $\frac{f(n_i)}{f(bn_i)}\to \infty.$  This was used to show that divisibility function restricted to $\bigcup\limits_i [bn_i,n_i]$ is unbounded.  Then (A) follows.

The first condition implies that $G$ is uniformly $\epsilon$-coarsely loop divisible for every $\epsilon>0$ with respect to the pair $\bigl(\omega, (d_i)\bigr)$ for any ultrafilter $\omega$.  Therefore $\operatorname{Con}^\omega(G,(d_i))$ has trivial fundamental group.

The second condition implies $\bigl($by the same argument that was used to show $G$ is uniformly $\epsilon$-coarsely loop divisible for every $\epsilon>0$ with respect to the pair $\bigl(\omega, (d_i)\bigr)$ that there exists an $\epsilon>0$ such that $G$ is uniformly $\epsilon$-coarsely loop divisible for the pair $\bigl(\omega, (n_i)\bigr)$.  Hence, $\operatorname{Con}^\omega(G,(n_i))$ is locally simply connected.

\end{proof}

\begin{quest}
    Can this group have an asymptotic cone which is not locally simply connected?
\end{quest}

Or for finitely presented groups in general:
\begin{quest}\label{question2}\ulabel{question2}{Question}
Is there a finitely presented group which has a locally simply connected asymptotic cone and an asymptotic cone which is not locally simply connected?  \end{quest}

\ref{question2} is false if we consider the larger class of finitely generated groups.  Thomas and Velicovick consider a group $G_I = \pres{a,b}{(a^nb^n)^7=1; \ n\in I}$ which they show for an appropriate choice of $I$ has a simply connected and non-simply connected asymptotic cone (see \cite{TV}).

\begin{prop}
Let $I_0 = \{2^{2^n}\}$. Let $I_{0,k} = I_0\cap [2^{2^k}, \infty)$ and $I= \bigcup_{k=1}^\infty 2^k\cdot I_{0,k}$.  Then $G_I = \pres{a,b}{(a^nb^n)^7 \ n\in I}$ has a cone which is locally simply connected and a cone which is not semi-locally simply connected.

\end{prop}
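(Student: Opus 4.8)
The plan is to exhibit two scaling sequences for $G_I$---one landing inside the \emph{clusters} of relators, one inside the \emph{gaps} between them---and to read the two topological conclusions off the divisibility criteria proved above. The arithmetic of $I$ is what makes this work. Near the doubly-exponential scale $2^{2^m}$ the set $I$ contains the relator parameters $n=2^k\,2^{2^m}$ for $0\le k\le m$: these are spaced by a factor $2$ and fill out a multiplicative band $[\,2^{2^m},\,2^m2^{2^m}\,]$ of width $2^m\to\infty$, whereas consecutive clusters are separated by a gap of multiplicative width $2^{2^{m+1}}/(2^m2^{2^m})=2^{2^m-m}\to\infty$. The band replaces each single relator scale of the Thomas--Velickovic set $I_0$ by a whole geometric progression of relators, while the huge gaps survive; these two features give the two different cones.

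First I would produce the cone that is not semilocally simply connected. Put $d_m=2^m2^{2^m}$ (the top of the $m$-th cluster), fix any ultrafilter $\omega$, and work in $\Gamma(G_I,\{a,b\})$, a complete homogeneous geodesic space. The relators of cluster $m$ sit at $G_I$-lengths $14\cdot2^k2^{2^m}$, i.e.\ at fractions $14/2^{m-k}$ of $d_m$ for $0\le k\le m$, so as $m\to\infty$ the obstructions to division occur at cone-scales that fill up all of $(0,14]$. Concretely I would show that for every $\epsilon>0$ there is a $\delta\in(0,\epsilon)$ such that for every $\omega$-large $A$ the divisibility function $\vartheta$ is unbounded on $\bigcup_{m\in A}[\delta d_m,\epsilon d_m]$: for large $m$ this window contains relator parameters of cluster $m$, and the Thomas--Velickovic/Ol'shanskii--Sapir isoperimetric estimates for $G_I$ (a sustained Dehn-function bump $f(n)/f(bn)\to\infty$ across the band, cf.\ the estimate quoted for Ol'shanskii's group) produce, as in \ref{polynomial length}, loops at those scales whose minimal partition numbers $P(\cdot,|\cdot|/2)$ grow with $m$. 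This is exactly the statement that $G_I$ is \emph{not} $\epsilon$-coarsely loop divisible with respect to $(\omega,(d_m))$ for any $\epsilon>0$. The contrapositive of \ref{slsc} then shows $\operatorname{Con}^\omega(G_I,(d_m))$ is not semilocally simply connected, and \ref{uncountable} shows its fundamental group is in addition uncountable.

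Next I would treat the locally simply connected cone. Choose $d_m'$ in the $m$-th gap, e.g.\ $d_m'=2^{2^m}\cdot2^{(2^m-m)/2}$, so that for small fixed $\epsilon$ and large $m$ the window $[\delta d_m',\epsilon d_m']$ lies strictly between clusters $m$ and $m+1$ and avoids every relator length. At these scales $G_I$ carries no relators of comparable size, so its Cayley graph is tree-like up to the gap width and has a quadratic isoperimetric bound on a wide window (indeed these gap cones are $\R$-trees, as in \cite{TV}). By the same argument used for condition~(1) of Ol'shanskii's group above, a quadratic bound sustained over a window $[\delta d_m',\epsilon d_m']$ forces $G_I$ to be \emph{uniformly} $\epsilon$-coarsely loop divisible with respect to $(\omega,(d_m'))$, and \ref{eldpsimply} then gives that $\operatorname{Con}^\omega(G_I,(d_m'))$ is (uniformly) locally simply connected.

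The main obstacle is the isoperimetric input in the cluster case: I must verify that $\vartheta$ is genuinely unbounded across the \emph{entire} window $[\delta d_m,\epsilon d_m]$, not merely at the isolated relator scales, and this is precisely what the factor $2^k$ in the definition of $I$ is engineered to guarantee by turning each scale into a band of $m+1$ relators of geometric spacing $2$. Establishing the corresponding lower bound on $P(\cdot,|\cdot|/2)$ across the band (equivalently, the sustained Dehn-function bump) is the technical heart, and it also guarantees the consistency of the two halves: because the gaps have width tending to infinity there is \emph{no} absolutely non-divisible sequence in $G_I$---such a sequence would, by \ref{absuncountable}, force \emph{every} asymptotic cone to fail to be semilocally simply connected---so the failure of coarse divisibility really is confined to the cluster scalings, exactly as the coexistence of the two cones demands.
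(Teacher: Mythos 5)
Your global strategy coincides with the paper's: scale at the cluster tops $\rho_m = 2^m2^{2^m}$ for the bad cone and at the geometric middle of the gaps for the good cone (the paper takes $d_n = 8^{2^{n-1}}$, which is exactly your gap midpoint). For the good cone, however, the correct justification is the one you only make parenthetically: the Thomas--Velickovic argument shows directly that $\operatorname{Con}^\omega\bigl(G_I,(d_n)\bigr)$ is an $\R$-tree for every $\omega$, which is stronger than, and should replace, your claim of a ``quadratic isoperimetric bound on a wide window'' --- a claim that is not available off the shelf for an infinitely presented group and is never established in \cite{TV}.

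The genuine gap is in your cluster-scale argument. You propose to get unboundedness of $P(\cdot,|\cdot|/2)$ from isoperimetric estimates and a sustained Dehn-function bump, ``as in \ref{polynomial length}.'' That mechanism fails here for two reasons. First, $G_I$ is not finitely presented, so the paper's isoperimetric framework and \ref{polynomial length} do not apply; second, and more fundamentally, the relevant loops are the relator loops $(a^nb^n)^7$ themselves, which have area $1$ --- there is no area-based obstruction to dividing them at all. The actual input, which the paper takes from Lemma 1.1 of \cite{TV}, is small cancellation: each relator loop $\gamma_n$ satisfies $P\bigl(\gamma_n, |\gamma_n|/2\bigr) = \infty$ outright, i.e.\ it admits no finite half-length partition whatsoever. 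Once you have this, the issue you single out as ``the technical heart'' --- verifying that $\vartheta$ is unbounded across the \emph{entire} window $[\delta d_m,\epsilon d_m]$ --- is a misdiagnosis: nothing sustained is needed. For each fixed $i$, the relators at parameter $2^{2^m+m-i}$ give loops $\gamma_{m,m-i}$ sitting at cone-scale exactly $14/2^i$ relative to $\rho_m$, so their $\omega$-limit is a loop of length $14/2^i$ in $\operatorname{Con}^\omega\bigl(G_I,(\rho_m)\bigr)$ with no finite partition (a finite partition of the limit loop would induce finite partitions of the $\gamma_{m,m-i}$ $\omega$-almost surely, as in \ref{pap1} and \ref{conpart}). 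Letting $i\to\infty$ yields non-partitionable --- hence, by the argument of \ref{slsc}, essential --- loops of arbitrarily small length at the basepoint; this is precisely what the factor $2^k$ in the definition of $I$ buys, and failure of semilocal simple connectivity together with uncountability of $\pi_1$ then follow as you say from \ref{slsc} and \ref{uncountable}. So your scaling sequences and logical skeleton match the paper, but the engine you supply for non-partitionability is the wrong one and would not run.
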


\begin{proof}

Let $\gamma_n$ be the loop based at the identity with label $(a^nb^n)^7$ for $n\in I$.  Then Thomas and Velicovick show using small cancelation that $P\bigl(\gamma_n, \frac{|\gamma_n|}{2} \bigr) = \infty$ (See Lemma 1.1 of \cite{TV}).

If we let $d_n = 8^{2^{n-1}}$, then the argument of Thomas and Velicovick shows that $\gcon$ is an $\R$-tree for any $\omega$.

Let $\rho_n = 2^n2^{2^n}= 2^{2^n +n}$.  Let $\gamma_{n,k}$ be the loop with label $(a^{2^{2^n +k}}b^{2^{2^n +k}})^7$ for $k\geq n$.  Then $\frac{|\gamma_{n,k}|}{\rho_n} = \frac {14}{2^{n-k}}$.  Hence $(\gamma_{n, n-i})$ is a loop of length $\frac {14}{2^{i}}$ in $\operatorname{Con}^\omega \bigr(G, (\rho_n)\bigl)$ which has no finite partition.  Hence $\operatorname{Con}^\omega \bigr(G, (\rho_n)\bigl)$ is not semi-locally simply connected and has uncountable fundamental group for any $\omega$.

\end{proof}

\textbf{Acknolwedgements.}  The authors would like to thank Mark Sapir for may helpful discussions and suggestions with the writing of this paper.  As well, we would like to thank Yves Cornulier for pointing out that the cones of solvable Baumslag-Solitar groups, $Sol_3$, and some extensions of $\mathbb R^n$ by $\mathbb R$ are bi-Lipschitz.

\newpage
\bibliographystyle{plain}
\bibliography{bib}

\end{document}